\numberwithin{equation}{section}
\newtheorem*{question*}{Question}
\newtheorem*{thm*}{Theorem}
\newtheorem{thm}{Theorem}[section]
\newtheorem{prop}[thm]{Proposition}
\newtheorem{question}[thm]{Question}
\newtheorem{cor}[thm]{Corollary}
\newtheorem{defin}[thm]{Definition}
\newtheorem{lemma}[thm]{Lemma}
\newtheorem{sublemma}[thm]{Sublemma}
\newtheorem{example}[thm]{Example}
\newcommand{\N}{\mathfrak N}
\newcommand{\Z}{\mathfrak Z}
\newcommand{\V}{\mathcal V}
\newcommand{\g}{\mathfrak g}
\newcommand{\h}{\mathfrak h}
\newcommand{\lk}{\mathfrak k}
\newcommand{\p}{\mathfrak p}
\newcommand{\restrictto}[2]{\left. #1 \right|_{#2}}
\begin{document}
\title{ Detecting orbits along subvarieties via the moment map}

\date{}
\author{M. Jablonski}
\maketitle

This work addresses the following question.  Let $G$ be a (real or complex) linear algebraic reductive group acting on an affine variety $V$ and let $W$ be a subvariety of interest.

\begin{question*}Can we calculate the size of the moduli of $G$-orbits intersecting $W$?\end{question*}

By moduli we mean the set of points up to the equivalence of lying in the same $G$ orbit.
More generally, we are interested in understanding $W$ up to $G$-equivalence; here $G$ does not preserve $W$.
We study this question when $W$ is smooth and there exists a reductive subgroup $H$ of $G$ which measures the $G$-action along $W$.  The notion of measuring the $G$-action is by means of the moment map and we say that $G$ is $H$-detected along $W$ if $m_G(w) \in \h = Lie \ H$ for $w\in W$, where $m_G$ is the moment map of the $G$ action (see Section \ref{section: prelim}).  Our main result is the following.\\

\noindent \textbf{Theorem \ref{thm: finiteness of G vs H orbits} }  \textit{  Let $G,H,V,W$ be as above.  Suppose $G$ is $H$-detected along W.  For $w\in W\subset V$,   the components of $G\cdot w \cap W$ are $H_0$-orbits, where $H_0$ is the identity component of $H$.  Consequently, $G\cdot w \cap W$ is a finite union of $H$-orbits.
}\\

As an application, we apply our work to the problem of finding continuous families of non-isomorphic  nilpotent Lie groups which do not admit left-invariant Ricci soliton metrics  (Section \ref{section: applications to left-invar}).  Additionally, this work can be applied to finding continuous families of non-isomorphic nilpotent Lie groups which do admit left-invariant Ricci soliton metrics, see \cite{Jablo:ModuliOfEinsteinAndNoneinstein}.  Another application of the above theorem to nilgeometry is the follwing.\\

\noindent \textbf{Theorem \ref{thm: N = N1+N2 and einstein condition}} \textit{  Let $N$ be a nilpotent Lie group such that $N=N_1 N_2$, a product of normal subgroups.  Then $N$ admits a left-invariant Ricci soliton metric if and only if both $N_1$ and $N_2$ admit such a metric.}\\

We finish by demonstrating our techniques applied to the adjoint representation (Section \ref{section: adjoint repn}).  Let $G$ be a semi-simple, or reductive,  group and consider the adjoint action on $\g$.  We reprove the classical result that there are finitely many nilpotent orbits and show that
every nilpotent orbit in $\g$ is (analytically) distinguished in the sense of Definition \ref{def: distinguished point/orbit}; that is, each nilpotent orbit contains a critical point of the norm squared of the moment map.

\section{Preliminaries}\label{section: prelim}

Let $G$ be a complex linear reductive group acting (rationally) on an affine algebraic variety $X$.  The following theorem is well-known.

\begin{thm} There exists a linear representation $T:G\to GL(V)$ and closed imbedding $i:X\to V$ which is $G$-equivariant; that is, $i(gx) = T(g)i(x)$ for $x\in X$, $g\in G$.\end{thm}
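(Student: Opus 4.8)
The plan is to construct $V$ and $i$ directly from the coordinate ring $\mathbb{C}[X]$, exploiting the local finiteness of the $G$-action on regular functions. Recall that a rational $G$-action on $X$ induces a rational representation of $G$ on $\mathbb{C}[X]$ via $(g\cdot f)(x)=f(g^{-1}x)$, and that the defining property of such a representation is local finiteness: each $f\in\mathbb{C}[X]$ lies in some finite-dimensional $G$-stable subspace. This single fact is the engine of the proof.

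First I would use that $X$ is affine to write $\mathbb{C}[X]=\mathbb{C}[f_1,\dots,f_n]$ for finitely many regular functions. Applying local finiteness to each $f_i$ produces finite-dimensional $G$-invariant subspaces $W_i\ni f_i$, and I would set $W=W_1+\cdots+W_n$, a finite-dimensional $G$-stable subspace whose elements still generate $\mathbb{C}[X]$ as an algebra. Fixing a basis $g_1,\dots,g_m$ of $W$, I would then take $V=W^{*}$ with $T$ the contragredient representation and define $i\colon X\to V$ by evaluation, $i(x)(\phi)=\phi(x)$; concretely $i(x)=(g_1(x),\dots,g_m(x))$ in the dual basis, so $i$ is a morphism because the $g_j$ are regular. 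Equivariance is then a one-line verification: for $\phi\in W$ we have $i(gx)(\phi)=\phi(gx)=(g^{-1}\cdot\phi)(x)=i(x)(g^{-1}\cdot\phi)=(T(g)i(x))(\phi)$, where the last step is the definition of the dual action, giving $i(gx)=T(g)i(x)$.

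To see that $i$ is a closed imbedding, I would examine the comorphism $i^{*}\colon\mathbb{C}[V]\to\mathbb{C}[X]$: it sends the coordinate functions on $V$ to $g_1,\dots,g_m$, which generate $\mathbb{C}[X]$, so $i^{*}$ is surjective, and a morphism of affine varieties with surjective comorphism is a closed immersion. The only non-formal ingredient---and thus the main obstacle to isolate cleanly---is the local finiteness of the rational representation on $\mathbb{C}[X]$; everything else is bookkeeping. It is worth noting that reductivity of $G$ plays no role in this statement: it holds for any linear algebraic group acting rationally on an affine variety, and only the word ``rational'' is essential.
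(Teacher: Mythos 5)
Your proof is correct and complete. The paper itself gives no proof of this theorem---it simply cites \cite[Theorem 1.5]{PopovBook}---and your argument (local finiteness of the $G$-action on $\mathbb{C}[X]$, a finite-dimensional $G$-stable subspace $W$ containing algebra generators, the evaluation embedding into $W^{*}$ with the contragredient action, and surjectivity of the comorphism) is precisely the standard proof found in that reference; your closing observation that reductivity of $G$ is never used is also accurate.
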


In this way we can reduce to the setting $X=V$, for a proof see \cite[Theorem 1.5]{PopovBook}.  Let $G$ be a complex reductive group and $K$ a maximal compact subgroup of $G$.  Let $V$ be a complex vector space on which $G$ acts linearly and rationally.  We denote this action by
    $$G\times V \to V \ \ \ \ \ (g,v)\mapsto g\cdot v$$

We may endow $V$ with a positive definite Hermitian inner product $H$.  This Hermitian inner product may be written as $H=S+iA$ where $S,A$ are real-valued symmetric,(resp.) anti-symmetric real bilinear forms.  Here $S=\frac{1}{2}\{ H+\overline H\}$, $A=\frac{1}{2i}\{H - \overline H\}$, and $A(v,w) = S(v , iw )$, where $\overline z$ denotes the complex conjugate of $z\in \mathbb C$.  Equivalently, we could start with an inner product (real bilinear, positive definite form) $S$ which is $i$-invariant and build a positive definite Hermitian form $H$ as above.

A Hermitian inner product $H(\cdot , \cdot )$ on $V$ is called \textit{$G$-compatible} if $H$ is $K$-invariant.  This definition implies that $G$ is closed under the adjoint operation.  Like wise, a (real) inner product $S$ is called $G$\textit{-compatible} if it is $K$-invariant and $i\lk$ acts symmetrically, where $\g = \lk \oplus i\lk$ and $\lk = LK$; that is, if it is $K$-invariant and $i$-invariant.
Such inner products always exist on $V$ by averaging
.  We will denote $S(\cdot , \cdot )$ by  $<\cdot , \cdot >$ also.

\subsection*{Cartan Involutions}
Let $G$ be a closed subgroup of $GL(V)$.
The adjoint with respect to $H$ is
denoted by $*$.  Any involution of the form $\theta (g) = (g^*)^{-1}$ for $g\in GL(V)$ is called a \textit{Cartan involution} of $GL(V)$.  If the involution $\theta$ leaves $G$ stable, then we say that $\theta$ is a Cartan involution of $G$.  Such involutions always exist as we demonstrate.

Choose $H$, or equivalently $S$, to be invariant under $K$ and $i$, as above.  Then $G$ is stable under the metric adjoint operation.  This gives a Cartan involution $\theta (g) = (g^*)^{-1}$ on $G$ with $K=G^\theta$.  We denote the corresponding involution on $\g$ by $\theta$ and we have $\lk = LK = +1$ eigenspace of $\theta$ and $\p=i\lk = -1$ eigenspace of $\theta$.  We observe that $\theta$ on $\g$ is conjugate linear and is just complex conjugation with respect to the real form $\lk$ of $\g$.   See \cite{Mostow:SelfAdjointGroups}, \cite{RichSlow}, and references therein for more information on Cartan involutions and decompositions.

\begin{prop}[Mostow]\label{prop: cartan decomp of H and G} Let $G$ be a complex (linear algebraic) reductive group and $H$ a reductive subgroup.  Then there exists a Cartan involution $\theta$ which simultaneously preserves $G$ and $H$.  The involution $\theta$ can be chosen so that $K_H = H^\theta$ is a previously chosen maximal compact subgroup of $H$ and $K_H \subset K= G^\theta $ where $K$ is a maximal compact subgroup of $G$.
\end{prop}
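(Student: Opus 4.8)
The plan is to phrase everything in terms of the symmetric space of Hermitian inner products on $V$ and to pin down a single point of it that is fixed by the prescribed maximal compact subgroup $K_H$ of $H$. Write $\mathcal X$ for the space of positive-definite Hermitian forms on $V$; identifying a form with its Gram matrix gives $\mathcal X \cong GL(V)/U(V)$, a Hadamard manifold (complete, simply connected, nonpositive curvature) on which $GL(V)$ acts transitively by isometries via $g\cdot x = H_x(g^{-1}\cdot,\, g^{-1}\cdot)$. A point $x\in\mathcal X$ carries exactly the package the preliminaries attach to a choice of $H$: an adjoint $*_x$, a Cartan involution $\theta_x(g)=(g^{*_x})^{-1}$ of $GL(V)$, and the unitary group $U(V,H_x)=\mathrm{Stab}_{GL(V)}(x)$. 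Thus producing the desired $\theta$ amounts to producing the right point of $\mathcal X$.

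First I would fix a $G$-compatible form (one exists by the averaging in the preliminaries), giving a point $x_0$, an involution $\theta_0$, and a maximal compact $K_0=G^{\theta_0}$. The orbit $S:=G\cdot x_0\subseteq\mathcal X$ is then a totally geodesic submanifold isometric to $G/K_0$, hence itself Hadamard, and every point of $S$ is again a $G$-compatible form, so that $\theta_x$ preserves $G$ with $G^{\theta_x}=\mathrm{Stab}_G(x)$ for all $x\in S$. The subgroup $H\subseteq G$, and in particular the compact group $K_H$, acts on $S$ by isometries, so by the Cartan fixed point theorem $K_H$ fixes some $x_1\in S$. Set $\theta:=\theta_{x_1}$ and $K:=G^\theta=\mathrm{Stab}_G(x_1)$, a maximal compact subgroup of $G$. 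Because $K_H$ fixes $x_1$ we immediately get $K_H\subseteq K$; and since $\mathrm{Stab}_H(x_1)=H\cap K$ is a compact subgroup of $H$ containing the maximal compact $K_H$, maximality forces $H\cap K=K_H$.

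The crux, and the step I expect to be the main obstacle, is to show that this same $\theta$ preserves $H$ and restricts to a Cartan involution of $H$, equivalently that $H$ is self-adjoint with respect to $H_{x_1}$, equivalently $\h=(\h\cap\lk)\oplus(\h\cap\p)$ for the decomposition $\g=\lk\oplus\p$ determined by $\theta$. One inclusion is free: $\mathrm{Lie}(K_H)\subseteq\lk$ consists of $H_{x_1}$-skew-Hermitian elements, which $\theta$ fixes. What is not free is that the complementary part of $\h$ lands in $\p$; this is precisely the content of Mostow's theorem and uses reductivity of $H$ in an essential way, since it fails for general subgroups. To close it I would invoke convexity on the Hadamard space: the set $\mathcal X_H$ of forms making $H$ self-adjoint is a nonempty, closed, totally geodesic (hence convex) $H$-orbit isometric to $H/K_H$, and I would arrange the fixed point to lie in $\mathcal X_H$ by minimizing, over the convex $K_H$-fixed locus, a $K_H$-invariant convex function whose minimum set is $\mathcal X_H$ (a Kempf--Ness / moment-type energy). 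Its minimizer is then simultaneously $K_H$-fixed and $H$-compatible, and may be taken as $x_1$.

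Once $x_1\in\mathcal X_H$, the noncompact part of the $H$-Cartan decomposition is Hermitian with respect to $H_{x_1}$, so $\theta(\h)=\h$ and $\theta|_H$ is a Cartan involution of $H$ with $H^\theta=\mathrm{Stab}_H(x_1)=K_H$. Combined with $K_H\subseteq K=G^\theta$ from the construction above, this produces the simultaneous Cartan involution with the prescribed compatibility, completing the proof.
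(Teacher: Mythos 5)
The paper gives no proof of this proposition (it is quoted from Mostow, with references), so your argument has to stand on its own; it does not. Your first two steps are correct and standard: the symmetric space $\mathcal X\cong GL(V)/U(V)$, the total geodesy of $S=G\cdot x_0$, and the Cartan fixed point theorem applied to $K_H$ do produce a point $x_1\in S$ with $\theta=\theta_{x_1}$ preserving $G$, $K_H\subseteq K=G^\theta$, and $H\cap K=K_H$. The genuine gap is exactly where you flag it, and your proposed closure does not work as written. First, the set $\mathcal X_H$ of forms making $H$ self-adjoint is not an $H$-orbit: for $H$ trivial (or central, e.g.\ $H=\{\pm I\}$) one has $\mathcal X_H=\mathcal X$ while every $H$-orbit is a point, so $\mathcal X_H$ is not ``isometric to $H/K_H$.'' Second, the minimization scheme is circular: to conclude that the minimizer of your (unspecified) convex energy over the $K_H$-fixed locus lies in $\mathcal X_H$, you must already know $\mathcal X_H\cap S^{K_H}\neq\emptyset$, which is precisely the statement being proved. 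As written, the crux step restates the goal rather than proving it.

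The irony is that in the complex case the crux is automatic, and your steps 1--2 already suffice once you invoke structure theory instead of convexity. Since $H$ is a complex linear algebraic reductive group with maximal compact subgroup $K_H$, the unitarian trick gives $\h=\lk_H\oplus i\lk_H$ and $H=K_H\exp(i\lk_H)$, with every component of $H$ meeting $K_H$; this is where reductivity of $H$ enters (a unipotent subgroup admits no such decomposition). Your fixed point gives $K_H\subseteq K$, hence $\lk_H\subseteq\lk$, hence $i\lk_H\subseteq i\lk=\p$ --- the identity $\p=i\lk$ holding because the action is complex linear, exactly as in the paper's preliminaries (and note you already used this same fact for $G$ when asserting that $K$-invariance of the form makes $G$ self-adjoint). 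Therefore $\h$ is $\theta$-stable, so $\theta(H^0)=H^0$; since $\theta$ fixes $K_H\subseteq K$ pointwise and $H=K_H H^0$, we get $\theta(H)=H$, and $H^\theta=H\cap K=K_H$ by your maximality argument. In fact the symmetric-space machinery can be bypassed entirely: embed $K_H$ in a maximal compact $K$ of $G$ (Cartan--Iwasawa--Malcev) and average a Hermitian form over $K$. Your worry is genuine only in the real case, where $\p\neq i\lk$ and one really needs Mostow's argument; but that is not what the stated (complex) proposition requires.
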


We say that the Cartan decomposition of $H$ above is \textit{compatible} with the Cartan decomposition of $G$.  In practice we will be interested in inner products which are both $G$ and $H$-compatible. The above proposition says that such inner products always exist.  If an inner product is $G$-compatible and $H$ is compatible with $G$, then the given inner product is $H$-compatible.\\

Similarly Cartan involutions exist on real algebraic reductive groups and the above proposition is still valid.  In the real setting, however, the space $\p = -1$ eigenspace of $\theta$ will not equal $i\lk$ as $\g$ might not be a complex Lie algebra.  Regardless,  Cartan involutions and the decomposition $\g = \lk \oplus \p$ are  good tools for studying the the geometry of orbits of  real rational representations of real reductive groups, see \cite{RichSlow}, \cite{Marian}, and \cite{EberleinJablo} for applications to the study of real group orbits.

\subsection*{Moment maps}
Endow $\g$ with an $Ad\ K$-invariant, $i$-invariant, $\theta$-invariant  inner product $<<,>>$.  This is equivalent to $<<,>>$ being the real part of an $Ad\ K$-invariant, $\theta$-invariant Hermitian inner product; we choose $<<,>>$ to be $\theta$-invariant so that $g=\lk \oplus \p$ is an orthogonal decomposition.  Such inner products on $\g$ always exist as demonstrated by the following examples.  Additionally, one could guarantee their existence by averaging.

\begin{example}\label{ex: inner product on sl_n} Let $G$ be an algebraic reductive subgroup of $SL(E)$.  Let $\theta$ denote a $G$ stable Cartan involution of $SL(E)$ and let $B$ denote the Killing form of $\mathfrak{sl}(E)$.  Then the inner product $<<\cdot , \cdot >> = -B(\cdot , \theta(\cdot ))$ satisfies the conditions stated above.\end{example}

\begin{example}If $G$ is semi-simple then one may use the inner product $<<\cdot , \cdot >> = - B_\g (\cdot, \theta(\cdot ))$ where $B_\g$ is the Killing form of $\g$.\end{example}

Consider the identification/isomorphism $\varphi: \g \to  \g^* = Hom_\mathbb R(\g , \mathbb R)$
defined via $<<,>>$; that is, $\varphi (X) = <<X,\cdot >> $.  If we consider the coadjoint action of $G$ on $\g^*$ defined by $(k\cdot F)(Y) = F(k^{-1}\cdot Y)$ then the isomorphism $\varphi$ is $K$-equivariant.  We endow $\g^*$ with the inner product $<<\varphi(X),\varphi(Y)>>=<<X,Y>>$ so that $\varphi$ is an isometry.  This inner product on $\g^*$ is $K$-invariant (with respect to the coadjoint action).   Moreover, we have $\lk ^* \simeq \lk$ and $\p ^* \simeq \p$ and the decomposition $\g^* = \lk^* \oplus \p^*$ is orthogonal.

We define the $\g^*$-valued \textit{moment map} $m^*:V\to \g^*/\lk^* \simeq \p* \subset \g^*$ as follows.  First, consider the function $\rho_v:G\to \mathbb R$ defined by $\rho_v(g)=|g\cdot v|^2$, where $|u|^2 = <u,u>=S(u,u)=H(u,u)$.  The function $m^*: V\to \g^*$ is defined by
    $$m^*(v)=d(\rho_v)_e$$
The $K$-invariance of $|\cdot |^2$ implies $m^*(v)|_{\lk} = 0$.  

We define the $\g$-valued \textit{moment map} $m:V\to \g$ by $m^*=\varphi(m)$.  Equivalently, $m$ is defined implicitly by
    $$<<m(v),X>> = 2<X\cdot v, v> \ \mbox{ for all } X\in \g$$
The $K$-invariance of $<,>$ implies $m(v)\in \lk ^\perp \subset \g$.  The $\theta$-invariance of $<<,>>$ implies $\lk^\perp = \p$ and thence
    $$m(v) \in \p \ \mbox{ for all } v\in V$$
Having $\p$-valued moment maps is the reason we required $<<,>>$ to be $\theta$-invariant.\\

We observe that $m,m^*$ are (real) homogeneous polynomials of degree 2.  Moreover, for $c\in \mathbb C$ we have $m(cv)=|c|^2m(v)$ and $m^*(cv)=|c|^2m^*(v)$, where $|c|^2 = c \ \overline c$ is the usual norm square on $\mathbb C$.  This gives rise to well-defined polynomials on complex projective space
    $$m[v]=\frac{m(v)}{|v|^2} \ ,\ \ \ \ m^*[v]=\frac{m^*(v)}{|v|^2}$$
The norm of these functions is the same.  Historically more attention has been placed on $m^*$, however to study $||m||^2 = ||m^*||^2$ we will work with the function $m$ as it is very natural from
the perspective of the problems addressed in this note.

When $G$ is a real reductive group the same results hold except that $\p$ is not necessarily $i\lk$.  Moreover, in the real setting our moment maps give rise to well-defined functions on real projective space.

\subsection*{Closed and distinguished orbits}

Consider the action of $G$ on $V$.  It has been shown that the closed orbits are precisely the orbits which intersect the zero set of the moment map $m: V \to \p$.  In the complex setting this theorem was done by \cite{Kempf-Ness} and in the real setting done by \cite{RichSlow}.  We state it below.

\begin{thm} Consider the action $G\times V \to V$.  Denote the zero set of $m:V\to \p$ by $\mathfrak M = m^{-1}(0)$.  For $v\in V$, the orbit $G\cdot v$ is closed if and only if $G\cdot v \cap \mathfrak M \not = \emptyset$.
\end{thm}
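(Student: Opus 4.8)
The plan is to place the convexity of the norm-squared function along the geodesics of the symmetric space $G/K$ at the analytic heart of the argument; this single ingredient drives both implications and is uniform in the complex (Kempf--Ness) and real (Richardson--Slodowy) settings. The starting point is the polar (Cartan) decomposition $G=K\exp(\p)$. Since $K$ preserves $|\cdot|^2$, the function $\rho_v(g)=|g\cdot v|^2$ factors through $\exp(\p)$, and for a fixed $X\in\p$ I would study $f(t)=|\exp(tX)\cdot v|^2$. Because the inner product is $G$-compatible, every element of $\p$ acts as a self-adjoint operator, so diagonalizing $X$ and writing $f(t)=\sum_j e^{2\lambda_j t}|v_j|^2$ in an eigenbasis exhibits $f$ as a sum of exponentials, hence convex (indeed log-convex). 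Differentiating at $t=0$ gives $f'(0)=2\langle X\cdot v,v\rangle=\langle\langle m(v),X\rangle\rangle$, the defining identity of the moment map. Thus $m(v)=0$ says precisely that $t=0$ is a critical point of every such $f$, and convexity then promotes this to a global minimum. In other words, $m(v)=0$ if and only if $v$ is a minimal vector of its orbit, $|v|\le|g\cdot v|$ for all $g\in G$.

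For the forward implication I would argue by properness. If $G\cdot v$ is closed, then each sublevel set $\{u\in G\cdot v:|u|^2\le c\}$ is closed in $V$ and bounded, hence compact, so $|\cdot|^2$ attains its infimum on $G\cdot v$ at some point $w$. Being a minimum of the restriction to the orbit, $w$ is a critical point in every orbit direction $X\cdot w$, so $\langle\langle m(w),X\rangle\rangle=2\langle X\cdot w,w\rangle=0$ for all $X\in\g$, whence $m(w)=0$ and $w\in G\cdot v\cap\mathfrak M$.

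The reverse implication is the main obstacle. Here I assume $m(w)=0$, so $w$ is minimal by the convexity discussion, and I must show $G\cdot w$ is closed. Given a sequence $g_n\cdot w\to u$, I would use $g_n=k_n\exp(X_n)$ with $k_n\in K$, $X_n\in\p$, so that $|\exp(X_n)\cdot w|^2=|g_n\cdot w|^2\to|u|^2$. If the $X_n$ stay bounded, compactness of $K$ and of a bounded region of $\p$ yields a convergent subsequence $g_n\to g$, giving $u=g\cdot w\in G\cdot w$. The genuine difficulty is the escaping case $|X_n|\to\infty$: one must rule out that a divergent sequence in $\exp(\p)$ produces a limit on the boundary. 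The tool is again convexity together with minimality. If $G\cdot w$ were not closed, its closure would contain a closed orbit of strictly smaller dimension, which by the forward direction carries its own minimal vector $u_0$; passing to normalized directions $X_n/|X_n|\to Y\in\p$ and invoking strict convexity of $t\mapsto|\exp(tY)\cdot w|^2$ whenever $Y$ moves $w$, I would extract a limiting one-parameter degeneration along which the norm strictly decreases toward the boundary orbit, contradicting $|w|\le|g\cdot w|$. Making this escaping-sequence analysis rigorous — simultaneously controlling $t_n\to\infty$ and $Y_n\to Y$ and identifying the limiting degeneration — is where the real work lies, and is exactly the content supplied by Kempf--Ness complex-analytically and by Richardson--Slodowy in the real case. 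I would close by remarking that nothing in the argument uses $\p=i\lk$: in the real setting $\p$ still acts by self-adjoint operators, so the convexity on which everything rests is unchanged, and the proof goes through verbatim.
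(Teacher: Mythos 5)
First, a point of comparison: the paper does not prove this theorem at all. It is quoted as a known result, with the complex case attributed to Kempf--Ness and the real case to Richardson--Slodowy, so any assessment is against those sources rather than against an argument in the paper. Within your proposal, the forward implication is correct and complete: on a closed orbit the sublevel sets of the norm are compact, the infimum is attained at some $w\in G\cdot v$, and at that minimizer $\langle\langle m(w),X\rangle\rangle = 2\langle X\cdot w,w\rangle = 0$ for all $X\in\g$, so $m(w)=0$. Your convexity set-up and the equivalence ``$m(v)=0$ if and only if $v$ is a minimal vector of its orbit'' are also correct.

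The genuine gap is in the reverse implication, and it is worse than the one you acknowledge: the contradiction you sketch cannot occur. If $m(w)=0$, then for every $Y\in\p$ the function $f_Y(t)=|\exp(tY)\cdot w|^2$ is convex with $f_Y'(0)=0$, hence \emph{non-decreasing} on $[0,\infty)$, and strictly increasing and unbounded whenever $Y\cdot w\neq 0$. So from a minimal vector the norm never ``strictly decreases toward the boundary orbit''; minimality forecloses exactly the degeneration you propose to extract. What convexity actually gives in the escaping case is this: if $\exp(X_n)\cdot w$ converges with $|X_n|\to\infty$ and $X_n/|X_n|\to Y$, then for each fixed $s$ the value $|\exp(sX_n/|X_n|)\cdot w|^2$ is bounded by the maximum of the endpoint values on $[0,|X_n|]$, so in the limit $f_Y$ is bounded, convex, with $f_Y'(0)=0$, hence constant, hence $Y\cdot w=0$. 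The escape direction lies in the isotropy algebra $\g_w$ --- no contradiction, only information. The remaining (and genuinely hard) work is to show that escape asymptotic to the stabilizer can be gauged away, i.e., that the orbit map $G/G_w\to V$ is proper; this uses that $G_w$ is self-adjoint, hence reductive, when $w$ is minimal, together with either Richardson--Slodowy's properness argument or the Kempf/Birkes one-parameter-subgroup theorem. Since you defer precisely this step to the references, your proposal amounts in the end to citing the same sources the paper cites, packaged with a sketch whose concluding mechanism is not valid; the convexity framework is right, but the hard direction remains unproven.
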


The set of points whose $G$-orbits are closed is called the set of \textit{stable points}.  In contrast, a point is called \textit{unstable} if its $G$-orbit contains zero.  The set of unstable points is called the \textit{null-cone}.  This set of points can be studied in a more refined way by passing to projective space.

\begin{defin}\label{def: distinguished point/orbit} A point $v\in V$ or $[v]\in \mathbb PV$ is called $G$-distinguished, or just distinguished when the $G$ action is clear, if $[v]\in \mathbb PV$ is a critical point of $||m||^2:\mathbb PV \to \mathbb R$.  Likewise, we call an orbit $G\cdot v$, or $G\cdot [v]$, distinguished if it contains a distinguished point.
\end{defin}

Closed orbits are always distinguished as zero is an absolute minimum of $||m||^2$.  The non-closed distinguished orbits all lie in the null-cone.

\begin{lemma}\label{lemma: dist point iff m(v)v=cv} Let $v\in V$.  Then $v$ is distinguished if and only if $m(v)\cdot v = cv$ for some $c\in \mathbb C$.
\end{lemma}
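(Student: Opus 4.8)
The plan is to lift the function $||m||^2$ from $\mathbb PV$ to the scale-invariant function $F(v)=||m(v)||^2/|v|^4$ on $V\setminus\{0\}$ and to locate its critical points directly. Since $m(cv)=|c|^2m(v)$, the function $F$ is invariant under the $\mathbb C^{\times}$-action $v\mapsto cv$, so its differential $dF_v$ annihilates the tangent space $\mathbb Cv$ to the orbit through $v$, which is exactly the kernel of the projection $\pi:V\setminus\{0\}\to\mathbb PV$. Because $\pi$ is a submersion and $F=||m||^2\circ\pi$, the point $[v]$ is a critical point of $||m||^2$ on $\mathbb PV$ if and only if $dF_v=0$ as a functional on the real vector space $V$. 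Thus the lemma reduces to showing that $dF_v=0$ is equivalent to the eigenvalue relation $m(v)\cdot v=cv$.

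The crux is computing the gradient of $||m||^2$ on $V$. First I would differentiate the implicit definition $<<m(v),X>>=2<X\cdot v,v>$ in a direction $w$, obtaining $<<dm_v(w),X>>=2<X\cdot w,v>+2<X\cdot v,w>$ for every $X\in\g$. Specializing to $X=m(v)$ and using $||m(v)||^2=<<m(v),m(v)>>$ then gives $d(||m||^2)_v(w)=2<<dm_v(w),m(v)>>=4<m(v)\cdot w,v>+4<m(v)\cdot v,w>$. Here the key structural input is that $m(v)\in\p$ and, since $<,>$ is $G$-compatible, every element of $\p=i\lk$ acts symmetrically; hence $<m(v)\cdot w,v>=<w,m(v)\cdot v>=<m(v)\cdot v,w>$, and the two terms collapse to $d(||m||^2)_v(w)=8<m(v)\cdot v,w>$. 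In other words, the $<,>$-gradient of $||m||^2$ is $8\,m(v)\cdot v$.

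With this in hand the rest is the quotient rule. Writing $F=G/N$ with $G=||m||^2$ and $N=|v|^4$, we have $dG_v(w)=8<m(v)\cdot v,w>$ and $dN_v(w)=4|v|^2<v,w>$, so the condition $dF_v=0$ for all $w$ forces $m(v)\cdot v=\frac{||m(v)||^2}{2|v|^2}\,v$, which is the desired relation with $c$ real and explicit. Conversely, if $m(v)\cdot v=cv$ for some $c\in\mathbb C$, I would first note that $m(v)$ is self-adjoint and complex linear, so $c$ is automatically real; substituting back and using $||m(v)||^2=2<m(v)\cdot v,v>=2c|v|^2$ makes the numerator of $dF_v$ vanish identically, so $[v]$ is critical.

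The main obstacle I anticipate is bookkeeping rather than ideas: correctly handling the reduction from critical points on $\mathbb PV$ to the vanishing of $dF_v$ on the real vector space $V$ (keeping straight the real inner product $<,>$ versus the Hermitian $H$, and the two-real-dimensional fiber $\mathbb Cv$), and making rigorous the use of self-adjointness of $\p$, which is simultaneously what collapses the cross terms in the gradient computation and what forces the eigenvalue $c$ to be real even though the statement permits $c\in\mathbb C$.
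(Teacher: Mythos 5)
Your proof is correct and is exactly the argument the paper has in mind: the paper omits its own proof (declaring the lemma immediate from the definitions), and the standard justification is precisely your computation — lift $||m||^2$ to the scale-invariant function $||m(v)||^2/|v|^4$ on $V\setminus\{0\}$, use the implicit definition of $m$ together with the symmetric action of $\mathfrak{p}$ to get $d(||m||^2)_v(w)=8\langle m(v)\cdot v,w\rangle$, and apply the quotient rule to see the differential vanishes iff $m(v)\cdot v$ is a multiple of $v$. Your treatment of the two genuine subtleties — the submersion reduction from $\mathbb{P}V$ to the cone, and the self-adjointness of $m(v)\in\mathfrak{p}$ (hence realness of $c$), which is what makes the converse direction work — is also correct.
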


We omit the proof of this lemma as it follows immediately from the definitions.

\begin{thm}\label{thm: disting orbits and -grad flow of m}  Let $G$, $\g$, $K$, and $V$ be as above endowed with  inner products as above.  Let $m$ denote the moment map of the representation and, for $v\in V$, denote by $\varphi_t(v)$ the negative gradient flow of $||m||^2$ starting at $v$.  Denote the limit point of this flow by $\varphi_\infty(v)$. If $G\cdot v$ is a distinguished orbit with $v$ such a distinguished point, then for every $g\in G$, $\varphi_\infty(g\cdot v)\in K\cdot v$.
\end{thm}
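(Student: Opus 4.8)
The plan is to reduce the theorem to two structural facts about the function $\|m\|^2$ on $\mathbb{P}V$: that its negative gradient trajectories never leave the $G$-orbit they start in, and that the critical points lying inside a single orbit $G\cdot[v]$ form exactly one compact orbit $K\cdot[v]$. Convexity of the Kempf--Ness function will be the bridge joining these facts to the convergence statement.

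First I would identify the gradient of $\|m\|^2$. Differentiating the defining relation $<<m(v),X>> = 2<X\cdot v,v>$ and using that every $X\in\p$ is self-adjoint for $<\cdot,\cdot>$, one obtains $<<dm_v(w),X>>=4<X\cdot v,w>$ for $X\in\p$; taking $X=m(v)$ gives $\mathrm{grad}\,\|m\|^2(v)$ as a positive multiple of $m(v)\cdot v$ (indeed $8\,m(v)\cdot v$) on $V$, and on $\mathbb{P}V$ the gradient is the component of $m(v)\cdot v$ orthogonal to $v$. In particular the gradient is tangent to $G\cdot[v]$ and, by Lemma \ref{lemma: dist point iff m(v)v=cv}, vanishes exactly at the distinguished points. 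Hence the negative gradient trajectory through any $g\cdot v$ has the form $\varphi_t(g\cdot v)=g(t)\cdot v$ with velocity generated by the symmetric elements $m(\varphi_t)\in\p$, and it remains in $G\cdot[v]$ for all finite time. Two further observations are needed: the gradient field is $K$-equivariant, since $m(kv)=\mathrm{Ad}(k)m(v)$ forces $\varphi_t(k\cdot u)=k\cdot\varphi_t(u)$; and $\|m\|^2$ is real-analytic (a ratio of polynomials), so by the \L{}ojasiewicz inequality the trajectory converges to a single critical point $\varphi_\infty(g\cdot v)$, necessarily a distinguished point of $\overline{G\cdot v}$.

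Using the $K$-equivariance and the Cartan decomposition $g=k\exp(Y)$ with $Y\in\p$, and since left translation by $k$ preserves $K\cdot v$, it suffices to treat $g=\exp(Y)$. The heart of the matter is the Kempf--Ness convexity: a direct computation shows that $t\mapsto\log|\exp(tX)\cdot v|^2$ is convex for $X\in\p$ (its second derivative is nonnegative by Cauchy--Schwarz), with first derivative $<<m[\exp(tX)\cdot v],X>>$. I would use this to prove the two structural facts that close the argument: that $\|m(v)\|$ is the minimal value of $\|m\|$ on $\overline{G\cdot v}$, and that the distinguished points of $G\cdot[v]$ comprise exactly the single orbit $K\cdot[v]$, on which that minimum is attained. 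Granting these, the limit $\varphi_\infty(\exp(Y)\cdot v)$ is a critical point reached by a value-decreasing flow that stays in $G\cdot[v]$; it can neither be a distinguished point of $G\cdot[v]$ outside $K\cdot[v]$ (there are none) nor escape to a boundary critical point, so it lies in $K\cdot v$.

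The main obstacle is precisely this last structural step, the Ness-type uniqueness: showing that within one orbit all distinguished points form a single $K$-orbit and that the whole orbit lies in the stable manifold of $K\cdot[v]$. The delicate point is excluding that the trajectory, which is only guaranteed to remain in the locally closed set $G\cdot[v]$ for finite time, limits onto a critical point in the boundary $\overline{G\cdot[v]}\setminus G\cdot[v]$. I expect to control this through the strict convexity of $t\mapsto\log|\exp(tY)\cdot v|^2$ modulo the stabilizer $G_{[v]}$, which identifies the optimal direction and forces the limit back into $K\cdot v$; the convexity and uniqueness inputs are those of Kempf--Ness and Ness, in the real setting as developed in \cite{RichSlow} and \cite{Marian}.
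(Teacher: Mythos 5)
Note first that the paper contains no internal proof of this theorem: it is imported wholesale from \cite{JabloDistinguishedOrbits}, with \cite[Section 2.5]{Sjamaar:ConvexityofMomentMapping} cited for the fact that the flow has a single limit point. So your attempt can only be compared against that reference, whose content is essentially the real-group analogue of Ness's theory of the null cone. Your setup is correct and matches the standard architecture: the gradient of $\|m\|^2$ at $v$ is $8\,m(v)\cdot v$ (projected orthogonally to $v$ on $\mathbb PV$), hence tangent to $G\cdot [v]$, so the trajectory stays in the orbit for all finite time; the flow is $K$-equivariant; real-analyticity plus the \L ojasiewicz inequality gives convergence to a single critical point; and the Cartan decomposition reduces the claim to $g=\exp(Y)$, $Y\in \p$.

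The gap is in what you call the two ``structural facts.'' They are not auxiliary inputs to be granted---they \emph{are} the theorem---and the tool you propose for proving them cannot reach them. Convexity of $t\mapsto \log|\exp(tX)\cdot v|^2$ is the Kempf--Ness function theory, which is what \cite{Kempf-Ness}, \cite{RichSlow}, and \cite{Marian} develop; its critical points along $G\cdot v$ are exactly the zeros of $m$, so it governs the semistable (closed-orbit) situation. A distinguished point in the null cone has $m(v)=\beta \neq 0$, the Kempf--Ness function then has \emph{no} critical points along the orbit, and its convexity says nothing direct about critical points of $\|m\|^2$. The known proofs (Ness in the complex case; in the real case precisely the cited \cite{JabloDistinguishedOrbits}) instead pass to the shifted problem for the centralizer $G_\beta$ of $\beta$ and its parabolic, and they exclude convergence to boundary critical points via the $G$-invariance of the Morse strata of $\|m\|^2$, which is itself a nontrivial theorem. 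Moreover, even granting your two facts, your closing argument does not close: minimality of $\|m([v])\|^2$ on $\overline{G\cdot [v]}$ together with uniqueness of the critical $K$-orbit inside $G\cdot [v]$ does not rule out a boundary critical point whose critical value lies strictly between $\|m([v])\|^2$ and $\|m([g\cdot v])\|^2$, and that is exactly what a value-decreasing trajectory could a priori limit onto. So your proposal is a faithful roadmap of the standard proof, but its load-bearing steps are missing and are attributed to references that treat only the $m=0$ case.
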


This theorem is true in both the real and complex settings, see \cite{JabloDistinguishedOrbits}.  A priori, it is not clear that the limit set of the flow is a single point.  For this point and more information on moment maps, see \cite[Section 2.5]{Sjamaar:ConvexityofMomentMapping}.

\subsection*{Detecting $G$-orbits along subvarieties}

Let $G$ be a (real or complex algebraic) reductive group which acts linearly and rationally on $V$.  Let $H$ be a reductive subgroup of $G$ which  has a compatible Cartan decomposition (see Proposition \ref{prop: cartan decomp of H and G}).  Let $V$ be endowed with a $G$-compatible metric (see beginning of Section \ref{section: prelim}).  Recall that the $G$-compatible metric on $V$ is also $H$-compatible as $H$ has a compatible Cartan decomposition.  Let $W$ be an $H$-stable smooth subvariety of $V$.

\begin{defin}\label{def: G orbit H detected along W}  We say that the $G$-action on $V$ is `$H$-detectable along $W$' if $m_G(w) \in \h$ for $w\in W$.
\end{defin}
  Similarly, we could state this definition for smooth varieties in projective space.\\

We observe the following. Let $V$ be a $G$-representation which is $H$-detectable on  $W$.  Then ${m_G(w)=m_H(w)}$ for all $w\in W$.  The proof follows immediately upon writing out the definitions, as we demonstrate.  As $m_G(w) \in \h$, we can show the desired equality by comparing  the inner products of $m_G(w)$ and $m_H(w)$ with every element of $X\in \h$
    \begin{eqnarray*}<m_G(w),X>_\g &=& <X\cdot w,w>_V \ \ \ \mbox{ from the definition of the moment map}\\
        &=& <m_H(w),X>_\h \mbox{ from the definition of the moment map}\\
        &=& <m_H(w),X>_\g \mbox{ as the metric on $\h$ is the induced metric from $\g$}
    \end{eqnarray*}

\section{Orbits of Compatible Subgroups}\label{section: orbits of compatible subgroups}

Let $G$ be a reductive group with a fixed choice of Cartan decomposition relative to a Cartan involution $\theta$.  Let $H$ be a reductive subgroup which is compatible with the choice of Cartan decomposition; that is, $H$ is  $\theta$-invariant (cf. Proposition \ref{prop: cartan decomp of H and G}).  If $G$ acts on $V$ and is endowed with a $G$-compatible inner product $<,>$, then the inner product $<,>$ is $H$-compatible; that is, $H$ is self-adjoint.

Let $m_G$, resp. $m_H$, denote the moment map of $G$, resp. $H$, acting on $V$.  Suppose there exists an $H$-stable smooth subvariety $W\subset V$ on which the $G$-action is $H$-detectable; that is, such that $m_G(w)=m_H(w)$ for all $w\in W$ (see the definition  above).  The subvariety $W$ is not required to be closed.\\

The following theorems are true for real and complex groups.  We first give proofs for complex groups and finish the section by explaining how to extend the results over $\mathbb C$ to results over $\mathbb R$.

\begin{thm}\label{thm: finiteness of G vs H orbits}  Let $G,H,W,V$ be as above. If $w\in W\subset V$, then the components of $G\cdot w \cap W$ are $H_0$-orbits and, consequently, $G\cdot w \cap W$ is a finite union of $H$-orbits.
\end{thm}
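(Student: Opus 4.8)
The plan is to reduce the statement to a single linear-algebra identity at the level of tangent spaces, namely
\begin{equation}\label{eq:tangent-identity}
\g\cdot u\cap T_uW=\h\cdot u\qquad\text{for every }u\in W,
\end{equation}
and then feed this identity into a dimension count. Granting \eqref{eq:tangent-identity}, I would argue as follows. Fix $w\in W$ and set $X=G\cdot w\cap W$. Since $H\subseteq G$ and $W$ is $H$-stable, $H\cdot u\subseteq X$ for every $u\in X$, so $T_u(H_0\cdot u)=\h\cdot u\subseteq T_uX$. On the other hand $X$ is the intersection of the two smooth locally closed subvarieties $G\cdot w$ and $W$, so its Zariski tangent space satisfies $T_uX\subseteq T_u(G\cdot w)\cap T_uW=\g\cdot u\cap T_uW$. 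Combined with \eqref{eq:tangent-identity} this gives $\dim_u X\le\dim T_uX\le\dim(\g\cdot u\cap T_uW)=\dim\h\cdot u=\dim H_0\cdot u\le\dim_u X$, forcing equality throughout. Hence every point of $X$ is a smooth point, $T_uX=\h\cdot u$, and $H_0\cdot u$ is a smooth submanifold of $X$ of the same dimension as $X$ near $u$, so it is open in $X$. As the $H_0$-orbits partition $X$ into open sets, each connected component of $X$ is a single $H_0$-orbit; since $X$ has finitely many components and $H/H_0$ is finite, $X$ is a finite union of $H$-orbits.

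The heart of the proof is therefore \eqref{eq:tangent-identity}, and this is where $H$-detectability enters, through differentiation of the moment map. The inclusion $\h\cdot u\subseteq\g\cdot u\cap T_uW$ is immediate from $H$-stability of $W$. For the reverse inclusion I would show $\h^\perp\cdot u\perp T_uW$, where $\h^\perp$ is the $\langle\langle\cdot,\cdot\rangle\rangle$-orthogonal complement of $\h$ in $\g$; since $\g=\lk\oplus\p$ is $\theta$-orthogonal and $\h$ is $\theta$-stable, $\h^\perp=(\lk\cap\h^\perp)\oplus(\p\cap\h^\perp)$. Granting $\h^\perp\cdot u\perp T_uW$, any $Z=X\cdot u\in\g\cdot u\cap T_uW$ splits as $X=X_\h+X_\perp$ with $X_\h\in\h$, $X_\perp\in\h^\perp$; then $X_\perp\cdot u=Z-X_\h\cdot u\in T_uW$ while also $X_\perp\cdot u\in\h^\perp\cdot u\perp T_uW$, so $X_\perp\cdot u=0$ and $Z=X_\h\cdot u\in\h\cdot u$.

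To establish $\h^\perp\cdot u\perp T_uW$ I would differentiate the detectability condition. Since $m_G(W)\subseteq\h$ and $m_G$ is $\p$-valued, $m_G(W)\subseteq\h\cap\p=\p_H$, whence $(dm_G)_u(Y)\in\p_H$ for all $Y\in T_uW$. Differentiating $\langle\langle m_G(v),X\rangle\rangle=2\langle X\cdot v,v\rangle$ and using self-adjointness of $X\in\p$ gives $\langle\langle(dm_G)_u(Y),X\rangle\rangle=4\langle X\cdot u,Y\rangle$; taking $X\in\p\cap\h^\perp$, so that $X\perp\p_H$, yields $\langle X\cdot u,Y\rangle=0$, i.e.\ $(\p\cap\h^\perp)\cdot u\perp T_uW$. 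This settles the $\p$-part of $\h^\perp$.

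The main obstacle is the $\lk$-part: differentiating the $\p$-valued moment map constrains only $\p$-directions, so nothing about $(\lk\cap\h^\perp)\cdot u$ falls out directly. Here I would exploit the complex structure. The representation is complex linear, so $(iX)\cdot u=i(X\cdot u)$ and $i(\lk\cap\h^\perp)=\p\cap\h^\perp$; moreover $W$ is a complex subvariety, so $T_uW$ is $i$-invariant, and because $\langle\cdot,\cdot\rangle$ is $i$-invariant the orthogonality $(\p\cap\h^\perp)\cdot u\perp T_uW$ transfers to $(\lk\cap\h^\perp)\cdot u\perp T_uW$. The two parts combine to give $\h^\perp\cdot u\perp T_uW$, hence \eqref{eq:tangent-identity}; note that the dimension count above then shows as a byproduct that $G\cdot w$ meets $W$ cleanly, so no transversality needs to be assumed. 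This argument is written for complex $G$; the real case I would recover through the complexification reduction indicated at the end of the section.
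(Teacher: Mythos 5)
Your proposal is correct, and its two halves relate to the paper differently. The heart of it --- the tangent-space identity $\g\cdot u\cap T_uW=\h\cdot u$ --- is proved essentially exactly as in the paper's Lemma \ref{lemma: TGw cap TW = THw}: pair the moment map against $X\in\p\cap\h^\perp$, differentiate along curves in $W$ using that elements of $\p$ act symmetrically, transfer the resulting orthogonality to the $\lk$-part of $\h^\perp$ via the complex structure and $i$-invariance of the metrics, and finish with the splitting $X=X_\h+X_\perp$ (your packaging via $(dm_G)_u(Y)\in\p_H$ is the same computation as the paper's differentiation of $\langle X\cdot w,w\rangle\equiv 0$ on $W$). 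Where you genuinely depart is in converting this identity into the orbit statement. The paper routes this through Lemma \ref{lemma: Jantzen} (Jantzen's lemma), whose proof decomposes $G\cdot w\cap W$ into irreducible components, shows by a closure/continuity argument that $H$ preserves each component, runs the dimension count only at smooth points of a component, and then needs both a sublemma (limits of orthonormal frames along Hausdorff-convergent sequences) and Proposition \ref{prop: G actions on varieties} (orbit boundaries have strictly smaller dimension) to upgrade ``open dense $H$-orbit'' to ``the whole component.'' You bypass all of this with the single inequality $\dim_u X\le\dim T_uX$, valid at every point of the reduced variety $X=G\cdot w\cap W$, singular or not: combined with $T_uX\subseteq \g\cdot u\cap T_uW=\h\cdot u$ and $H_0\cdot u\subseteq X$ it forces equality throughout, hence regularity of $X$ at every point, openness of each $H_0$-orbit, and the component statement by connectedness. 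This is shorter and more elementary (no limiting sublemma, no orbit-boundary proposition), it actually reproves Jantzen's lemma in its full generality --- smoothness of $W$ enters only through the Zariski tangent spaces already present in the hypothesis --- and it delivers the paper's unlabeled corollary (smoothness of $G\cdot w\cap W$) as an immediate byproduct rather than a posteriori. What the paper's longer route buys is mainly the explicit connection to the existing literature (Richardson, Jantzen) and the identification of the irreducible components with orbits along the way; your argument recovers that too, since each $H_0$-orbit is open, closed, and irreducible in $X$. Both treatments handle the real case identically, by deferring to the complexification argument via Theorem \ref{thm: BHC}.
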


\textit{Remark.} In this way we obtain a solution of the original question.  The dimension of the moduli of $G$ orbits which intersect $W$ is precisely the dimension of the moduli of $H$ orbits in $W$.  Moreover, if the $H$ action on $W$ were stable, then a lower bound on the dimension of the moduli of $H$ orbits would be the dimension of $W//H$, the GIT quotient.  See \cite{MumfordFogartyKirwan:GIT} for more information on Geometric Invariant Theory (GIT) and quotients.

\begin{cor} Let $G,H,W,V$ be as in the theorem above.  Then for $w\in W$, the intersection $G\cdot w \cap V$ is smooth.
\end{cor}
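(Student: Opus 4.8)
The plan is to reduce the entire statement to a single infinitesimal identity: for every $w\in W$,
$$(\g\cdot w)\cap T_wW=\h\cdot w.$$
Granting this, the theorem assembles quickly. Set $Z=G\cdot w\cap W$. Since $Z\subseteq G\cdot w$, a smooth locally closed subvariety, and $Z\subseteq W$, the Zariski tangent space satisfies $T_{w'}Z\subseteq(\g\cdot w')\cap T_{w'}W$ at every $w'\in Z$, while $H_0\cdot w'\subseteq Z$ forces $\h\cdot w'=T_{w'}(H_0\cdot w')\subseteq T_{w'}Z$. The displayed identity squeezes these inclusions into equalities, so $\dim T_{w'}Z=\dim\h\cdot w'=\dim H_0\cdot w'\le\dim_{w'}Z\le\dim T_{w'}Z$; hence $Z$ is smooth at $w'$ (which is exactly the Corollary), and $H_0\cdot w'$, being a smooth submanifold of $Z$ of full dimension with the same tangent space, is open in $Z$. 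As every $H_0$-orbit in $Z$ is then open, the $H_0$-orbits are precisely the connected components of $Z$. Finally $Z$ is constructible (resp.\ semialgebraic in the real case), so it has finitely many components, and since $H/H_0$ is finite this exhibits $Z$ as a finite union of $H$-orbits.

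It remains to prove the identity, where $\supseteq$ is immediate from $H$-stability of $W$. For $\subseteq$ I would first reduce to showing that $\h^\perp\cdot w\perp T_wW$, where $\h^\perp$ is the $\ip{\cdot}{\cdot}$-orthogonal (hence $\theta$-stable) complement of $\h$ in $\g$: writing $X=X_\h+Y$ with $X_\h\in\h$ and $Y\in\h^\perp$, the term $X_\h\cdot w$ already lies in $\h\cdot w\subseteq T_wW$, so $X\cdot w\in T_wW$ gives $Y\cdot w\in T_wW$, and orthogonality then forces $Y\cdot w=0$. The $\p$-directions are controlled by differentiating detectability: since $m_G$ is $\p$-valued and $m_G(W)\subseteq\h$, the restriction $m_G|_W$ takes values in $\h\cap\p$, so $(dm_G)_w\eta\in\h$ for every $\eta\in T_wW$; pairing with $Z\in\p\cap\h^\perp$ and using that such $Z$ act symmetrically, the defining identity $\ip{m_G(v)}{Z}=2\ip{Z\cdot v}{v}$ differentiates to $4\ip{Z\cdot w}{\eta}=0$. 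Thus $(\p\cap\h^\perp)\cdot w\perp T_wW$.

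The main obstacle is the compact directions $\lk\cap\h^\perp$, on which the moment map gives no information, since for $Y\in\lk$ the identity $\ip{m_G(v)}{Y}\equiv 0$ is vacuous. Here I would exploit complex linearity, which is why the complex case is treated first: assuming $H$ is a complex reductive subgroup and $W$ a complex submanifold, $\h^\perp$ is $i$-stable and $i(\lk\cap\h^\perp)=\p\cap\h^\perp$, so for $Y\in\lk\cap\h^\perp$ the element $iY$ lies in $\p\cap\h^\perp$ and the previous step gives $i(Y\cdot w)=(iY)\cdot w\perp T_wW$; since $T_wW$ is a complex subspace, this is equivalent to $Y\cdot w\perp T_wW$. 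Combining the two cases yields $\h^\perp\cdot w\perp T_wW$, and hence the identity. To pass to real groups, where $\p\ne i\lk$ and this trick is unavailable, I would complexify $G,H,V,W$, check that detectability, smoothness, and $H$-stability survive complexification, apply the complex result, and descend to the real points; verifying that this descent matches real $H_0$-orbits with the components of the real intersection is the delicate step of the real case.
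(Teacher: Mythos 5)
Your proposal is correct, and its technical core --- the identity $T_w(G\cdot w)\cap T_wW=T_w(H\cdot w)$, proved by pairing the moment map against $\p\cap\h^\perp$, differentiating along curves in $W$, and using complex linearity of the action to dispose of the directions in $\lk\cap\h^\perp$ --- is exactly the paper's Lemma \ref{lemma: TGw cap TW = THw}, proved in essentially the same way. Where you genuinely diverge is in how the conclusion is extracted from that identity (note the statement's ``$G\cdot w\cap V$'' is a typo for $G\cdot w\cap W$, which is what both you and the paper actually address). The paper feeds the identity into a separate lemma of Jantzen/Richardson type (Lemma \ref{lemma: Jantzen}): decompose $G\cdot w\cap W$ into irreducible components, show $H$ preserves each, compare dimensions at smooth points of the components, and use a limiting sublemma together with Proposition \ref{prop: G actions on varieties} to conclude each component is one $H_0$-orbit; smoothness then falls out a posteriori because orbits are smooth. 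You instead run a pointwise Zariski-tangent-space squeeze $\h\cdot w'\subseteq T_{w'}Z\subseteq\g\cdot w'\cap T_{w'}W=\h\cdot w'$, so that embedding dimension equals local dimension at every point of $Z=G\cdot w\cap W$, the local rings are regular (hence each point lies on a unique component), $Z$ is smooth, and each $H_0$-orbit is open, hence a component. This is a genuinely different and arguably cleaner assembly: smoothness is the primary output rather than a byproduct, and it sidesteps the paper's explicit worry that smoothness ``cannot be guaranteed a priori'' --- the point being that the Zariski tangent space of the reduced intersection is automatically trapped between $\h\cdot w'$ and $\g\cdot w'\cap T_{w'}W$, so no component analysis or limit of orthonormal frames is needed. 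The trade-off is that your route leans on scheme-theoretic regularity (valid here over a field of characteristic zero), while the paper's route is the standard, more elementary orbit-geometry argument; and your real case is only sketched via complexification and descent, though this matches, rather than falls short of, the paper's own brevity, which likewise defers the real case to Theorem \ref{thm: BHC}.
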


\begin{thm}\label{thm: finiteness of G vs H orbits on PV} Let $G,H,W,V$ be as above but with  $W$ a cone in $V$; that is, $W$ descends to a projective variety $\mathbb PW$ of $\mathbb PV$. Consider the induced actions on $\mathbb PV, \mathbb PW$.  If $[w]\in \mathbb PW$, then the components of $G\cdot [w] \cap \mathbb PW$ are $H_0$-orbits and, consequently, $G\cdot [w] \cap \mathbb PW$ is a finite union of $H$-orbits.
\end{thm}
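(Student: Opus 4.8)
The plan is to deduce this projective statement from the already-established affine Theorem \ref{thm: finiteness of G vs H orbits} by absorbing the projective scaling into the acting group. Concretely, I would introduce the reductive group $\widehat G = \mathbb C^* \times G$ acting on $V$ by $(\lambda,g)\cdot v = \lambda(g\cdot v)$, together with the reductive subgroup $\widehat H = \mathbb C^*\times H$. The point of this enlargement is that the projection $\pi:V\setminus\{0\}\to\mathbb PV$ carries the $\widehat G$-orbit $\widehat G\cdot w$ onto the projective orbit $G\cdot[w]$ and the punctured cone $W\setminus\{0\}$ onto $\mathbb PW$, so that $\pi$ identifies $\widehat G\cdot w\cap(W\setminus\{0\})$ with $G\cdot[w]\cap\mathbb PW$; the $\widehat G$-orbit is exactly the affine object sitting over the projective orbit. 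Since $W$ is a cone, $W\setminus\{0\}$ is a smooth $\widehat H$-stable (quasi-affine) subvariety of $V$, and this avoids any singularity of the cone at the origin, so the affine theorem will be applicable to the triple $(\widehat G,\widehat H, W\setminus\{0\})$ once its hypotheses are checked.

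Next I would verify those hypotheses. The group $\widehat G$ is reductive, and taking on $\mathbb C^*$ the Cartan involution $\lambda\mapsto\bar\lambda^{-1}$ (maximal compact $S^1$) together with the given $\theta$ on $G$ produces a Cartan involution $\widehat\theta$ of $\widehat G$ preserving $\widehat H$, so $\widehat H$ has a compatible Cartan decomposition. The given $G$-compatible Hermitian inner product $H$ on $V$ is also $\widehat G$-compatible, since scaling by $S^1$ preserves $H$ and hence the maximal compact $S^1\times K$ preserves $H$. Equipping $\widehat\g=\mathbb C\oplus\g$ with the orthogonal direct sum of the standard form on $\mathbb C=\mathrm{Lie}\,\mathbb C^*$ and $<<,>>$ on $\g$, the infinitesimal actions simply add and the form splits, so $m_{\widehat G}(w)=(m_{\mathbb C^*}(w),m_G(w))$. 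The first component lies in $\mathbb C=\mathrm{Lie}\,\mathbb C^*\subset\widehat\h$ automatically, and the second lies in $\h\subset\widehat\h$ by the $H$-detectability of the $G$-action along $W$; hence $m_{\widehat G}(w)\in\widehat\h$, i.e.\ the $\widehat G$-action is $\widehat H$-detectable along $W$.

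With the hypotheses secured, the affine Theorem \ref{thm: finiteness of G vs H orbits} gives that the components of $S:=\widehat G\cdot w\cap W$ are orbits of $\widehat H_0=\mathbb C^*\times H_0$. Finally I would push this down through $\pi$. The set $S$ is $\mathbb C^*$-stable (as $\mathbb C^*\subset\widehat G$ and $W$ is a cone), it avoids $0$, and it is $\mathbb C^*$-saturated, so $\pi|_S$ is an open surjection onto $G\cdot[w]\cap\mathbb PW$ whose fibers are the connected sets $\mathbb C^*\cdot s$; therefore $\pi$ induces a bijection between the connected components of $S$ and those of $G\cdot[w]\cap\mathbb PW$. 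A component $\mathbb C^*\cdot H_0\cdot w'$ of $S$ maps to $\pi(\mathbb C^* H_0 w')=H_0\cdot[w']$, so the components of $G\cdot[w]\cap\mathbb PW$ are precisely $H_0$-orbits. Since $H/H_0$ is finite and there are finitely many components, $G\cdot[w]\cap\mathbb PW$ is a finite union of $H$-orbits, exactly as in the affine case.

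The main obstacle I anticipate is not the orbit geometry, which the affine theorem already delivers, but rather the two compatibility verifications on which the reduction rests: first, that the $H$-detectability hypothesis is genuinely inherited by the enlarged group, which requires the orthogonal splitting of $\widehat\g$ so that $m_{\widehat G}$ cleanly peels off the scaling direction into $\widehat\h$ while leaving $m_G$ intact; and second, the bookkeeping that $\pi$ restricted to the $\mathbb C^*$-saturated set $S$ induces a bijection of connected components carrying $\widehat H_0$-orbits to $H_0$-orbits.
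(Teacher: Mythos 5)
Your proposal is correct, but it takes a genuinely different route from the paper's. The paper does \emph{not} deduce the projective statement from the affine Theorem \ref{thm: finiteness of G vs H orbits}; in fact it explicitly remarks that the projective theorem ``does not follow immediately from the first.'' Instead it runs the two proofs in parallel: Lemma \ref{lemma: TGw cap TW = THw} establishes the tangent-space identity $T_{[w]}(G\cdot [w])\cap T_{[w]}\mathbb PW = T_{[w]}(H\cdot [w])$ by lifting tangent vectors through the submersion $\pi\colon V\setminus\{0\}\to \mathbb PV$ and using that $\mathbb C\langle w\rangle\subset T_wW$ because $W$ is a cone, and then the orbit-counting Lemma \ref{lemma: Jantzen} (which is stated for an algebraic group acting on an arbitrary variety) is applied a second time, directly to the $G$-action on $\mathbb PW$. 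Your central-extension trick circumvents exactly this: by passing to $\widehat G=\mathbb C^*\times G$ and $\widehat H=\mathbb C^*\times H$ you absorb the projective scaling into the group, invoke the affine theorem as a black box for $(\widehat G,\widehat H, W\setminus\{0\})$, and descend through $\pi$. Your hypothesis checks are the right ones and they go through: the product Cartan involution, the $S^1\times K$-invariance of the Hermitian form, and the orthogonal splitting $m_{\widehat G}=(m_{\mathbb C^*},m_G)$, which lies in $\widehat{\h}=\mathbb C\oplus\h$ precisely by detectability of $G$ along $W$; the component bookkeeping is also sound, since each component of $S=\widehat G\cdot w\cap (W\setminus\{0\})$ is $\mathbb C^*$-stable ($\mathbb C^*$ being connected), so saturation plus openness of $\pi|_S$ gives the bijection on components, with $\pi(\mathbb C^*H_0\cdot w')=H_0\cdot[w']$. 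What each approach buys: the paper's parallel argument produces the projective tangent-space lemma as a statement of independent use and needs no auxiliary group, while yours is more modular --- the projective theorem becomes a formal consequence of the affine one --- at the cost of re-verifying the compatibility and detectability hypotheses for the enlarged pair. One caveat worth recording: your descent uses connectedness of the fibers $\mathbb C^*\cdot s$, so the argument is genuinely complex (a naive real analogue fails because $\mathbb R^*$ is disconnected); this is consistent with the paper, which likewise proves only the complex case directly and obtains the real case afterwards via Theorem \ref{thm: BHC}.
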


\textit{Remark}.  The proof of the second theorem does not follow immediately from the first.  However, the proofs are very similar and we give them simultaneously.

\begin{cor}\label{cor: G dist iff H dist} Let $w\in W \subset V$ where $W$ is a closed $H$-stable smooth subvariety (e.g., a subspace).  Then $G\cdot w$ is distinguished if and only if $H\cdot w$ is distinguished.
\end{cor}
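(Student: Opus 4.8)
The plan is to reduce the orbit statement to a pointwise statement on $W$ and then transport a distinguished point into $H\cdot w$ along a gradient flow. The pointwise reduction is immediate: since the $G$-action is $H$-detectable along $W$ we have $m_G(w)=m_H(w)$ for every $w\in W$, so by Lemma~\ref{lemma: dist point iff m(v)v=cv} a point of $W$ satisfies $m_G(w)\cdot w=cw$ if and only if it satisfies $m_H(w)\cdot w=cw$; that is, a point of $W$ is $G$-distinguished exactly when it is $H$-distinguished. This already yields one implication. If $H\cdot w$ is distinguished, a distinguished point $w'\in H\cdot w$ lies in $W$ (as $W$ is $H$-stable), hence is $G$-distinguished by the reduction, and $w'\in H\cdot w\subset G\cdot w$, so $G\cdot w$ is distinguished.

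For the reverse implication, suppose $G\cdot w$ is distinguished and let $\varphi_t$ be the negative gradient flow of $||m_G||^2$ started at $w$. The first key step is that this flow stays in $W$. The gradient of $||m_G||^2$ at a point $u$ is a real multiple of the infinitesimal action $m_G(u)\cdot u$; evaluated along $W$ this equals $m_H(u)\cdot u$, which lies in $T_uW$ because $m_H(u)\in\h$ and $W$ is $H$-stable. Thus the gradient is everywhere tangent to $W$ along $W$, and the trajectory cannot leave $W$. The same formula shows the gradient is tangent to the $G$-orbits, so the trajectory also remains in $G\cdot w$.

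The second key step applies Theorem~\ref{thm: disting orbits and -grad flow of m}. Writing $w=g\cdot v$ for a distinguished point $v\in G\cdot w$, we get $w_\infty:=\varphi_\infty(w)=\varphi_\infty(g\cdot v)\in K\cdot v\subset G\cdot w$. Hence $w_\infty\in G\cdot w\cap W$, and the whole trajectory together with its limit is a connected subset of $G\cdot w\cap W$. By Theorem~\ref{thm: finiteness of G vs H orbits} the components of $G\cdot w\cap W$ are $H_0$-orbits, so this connected path lies in a single $H_0$-orbit; in particular $w_\infty\in H_0\cdot w\subset H\cdot w$. As the limit of a negative gradient flow, $w_\infty$ is a critical point of $||m_G||^2$, i.e.\ a $G$-distinguished point of $W$; by the pointwise reduction it is $H$-distinguished, and therefore $H\cdot w$ is distinguished.

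I expect the main obstacle to be the invariance of the flow: verifying that the negative gradient flow of $||m_G||^2$ does not leave $W$. This is precisely where $H$-detectability is essential, through the identity $m_G(u)\cdot u=m_H(u)\cdot u$ on $W$ together with the $H$-stability of $W$. The remaining care is bookkeeping---applying Theorem~\ref{thm: finiteness of G vs H orbits} to the connected flow path, rather than to an arbitrary point of $G\cdot w\cap W$, so as to land in the $H_0$-orbit of $w$ itself.
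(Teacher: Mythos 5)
Your proposal is correct and takes essentially the same route as the paper's proof: the easy direction via the pointwise identity $m_G=m_H$ on $W$ together with Lemma \ref{lemma: dist point iff m(v)v=cv}, and the converse by running the negative gradient flow of $||m_G||^2$ from $w$, using Theorem \ref{thm: disting orbits and -grad flow of m} to place the limit in $G\cdot w$, and then the finiteness-of-components theorem to force that limit into the $H_0$-orbit of $w$. The only differences are cosmetic: you argue affinely with Theorem \ref{thm: finiteness of G vs H orbits} and make explicit the flow-invariance of $W$ (tangency of $m_G(u)\cdot u=m_H(u)\cdot u$ to $W$, which in the affine picture also tacitly uses that $W$ is a cone so the radial component of the gradient is tangent as well), whereas the paper argues projectively in $\mathbb PW$ via Theorem \ref{thm: finiteness of G vs H orbits on PV} and leaves the flow-invariance implicit.
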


\textit{Remarks}.
(1) Closedness of $W$ is necessary in this corollary as finding distinguished points involves taking limits.  This corollary gives a useful criterion for determining when a specific orbit of $G$ or $H$ is distinguished.  We give several worthwhile applications of this corollary in the following section and also in \cite{Jablo:ModuliOfEinsteinAndNoneinstein}.

(2) One of our main applications is to study the negative gradient flow of the norm squared of the moment map.  A problem of interest is to understand when the limit points of this flow are contained in the orbit of the initial point.  However, Theorems \ref{thm: finiteness of G vs H orbits} and \ref{thm: finiteness of G vs H orbits on PV} can be applied to much more general settings of evolutions.  If one knows that one can evolve within a special subgroup and that one has convergence in the large group orbit, then one can achieve convergence within the subgroup orbit.  This is a `specialized change of basis' type result, cf. Section \ref{section: applications to left-invar}.

(3) It is not true, in general, that $H\cdot w$ must be distinguished if $G\cdot w$ is distinguished.  In the special case of closed orbits, this problem has been explored in \cite{Vinberg:StabilityOfReductiveGroupActions} and \cite{EberleinJablo}.  In \cite{Vinberg:StabilityOfReductiveGroupActions} it is shown that if $G\cdot w$ is closed then $H\cdot gw$ is closed for generic $g\in G$.  Other criteria to determine closedness of orbits of (sub)groups have been constructed in \cite{EberleinJablo}; at the moment there is no general criterion which completely determines when $H\cdot w$ is closed, even if $G\cdot w$ is closed.  For an explicit example of a non-closed $H$ orbit in a closed $G$ orbit, see \cite{Jablo:GoodRepresentations}.

Before proving the theorem, we use it to deduce Corollary \ref{cor: G dist iff H dist}.

\begin{proof}[Proof of the Corollary \ref{cor: G dist iff H dist}] We apply Theorem \ref{thm: finiteness of G vs H orbits on PV} and Theorem \ref{thm: disting orbits and -grad flow of m} to prove the corollary.

Recall that $[w]\in \mathbb PW$, or $w\in W$,  is a $G$-distinguished point if (by definition) $[w]$ is a critical point of $||m_G||^2 : \mathbb PV \to \mathbb R$ where $m_G$ is the moment map of the $G$-action on $\mathbb PV$.  We denote the moment map on $V$ and $\mathbb PV$ by the same notation as context should avoid any confusion.  It is well-known, see Lemma \ref{lemma: dist point iff m(v)v=cv}, that $[w]\in \mathbb PW \subset \mathbb PV$ is $G$-distinguished if and only if $m_G(w)\cdot w = c w$ for some $c\in \mathbb C$.  In this way we see that a point $w\in W$ is $H$-distinguished if and only if $w\in W$ is $G$-distinguished as $m_G(w)=m_H(w)$.  Thus, if $H\cdot w$ is $H$-distinguished, then $G\cdot w$ must be $G$-distinguished.

Conversely, consider $w\in W$ such that the orbit $G\cdot w$ is $G$-distinguished.  Theorem \ref{thm: disting orbits and -grad flow of m} provides  the existence of $g_n \in G_0$ and $g\in G$ such that $[g_n \cdot w] \to [g\cdot w]$ in $\mathbb PW$, as $n\to \infty$, and $[g\cdot w]$ is distinguished.  Notice that the sequence $[g_nw]$ can be chosen to lie in $ \mathbb PW$ by following the negative gradient flow of $||m_G||^2$ starting at $[w]$ and our limit point is in $\mathbb PW$ as $\mathbb PW$ is closed.  Moreover, observe that $[g\cdot w]$ is in the same connected component of $G\cdot [w] \cap \mathbb PW$ as $[w]$. By Theorem \ref{thm: finiteness of G vs H orbits on PV}, there exists $h\in H$ such that $g\cdot [w] = h\cdot [w]$; that is, $H\cdot [w]$ contains a distinguished point and hence $H\cdot w$ is a distinguished orbit.

\end{proof}

Next we prove Theorems \ref{thm: finiteness of G vs H orbits} and \ref{thm: finiteness of G vs H orbits on PV}. The proofs of both are given simultaneously as they are so similar.  It suffices to consider $H$ which is connected as $H$, being an algebraic group, has finitely many components.

\begin{lemma}\label{lemma: TGw cap TW = THw} For $w\in W$, $T_w (G\cdot w)\cap T_wW = T_w(H\cdot w)$.  When $W$ is a cone in $V$ with projection $\mathbb PW \subset \mathbb PV$, we have $T_{[w]}(G\cdot [w])\cap T_{[w]}\mathbb PW = T_{[w]}(H\cdot [w])$.\end{lemma}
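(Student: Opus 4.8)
# Proof Proposal for Lemma \ref{lemma: TGw cap TW = THw}

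The plan is to identify each of the three tangent spaces concretely in terms of the Lie algebra action and then compare them using the detection hypothesis. Since $G\cdot w$, $H\cdot w$, and $W$ are all smooth, their tangent spaces at $w$ are well understood: $T_w(G\cdot w) = \g\cdot w = \{X\cdot w : X\in\g\}$ and $T_w(H\cdot w) = \h\cdot w$, while $T_wW$ is the ambient tangent space to the subvariety. The inclusion $T_w(H\cdot w)\subseteq T_w(G\cdot w)\cap T_wW$ is immediate: $\h\subseteq\g$ gives $\h\cdot w\subseteq\g\cdot w$, and since $W$ is $H$-stable the vectors $X\cdot w$ for $X\in\h$ are tangent to $W$. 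So the whole content is the reverse inclusion.

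For the reverse inclusion, I would take a vector $u\in T_w(G\cdot w)\cap T_wW$, write $u = X\cdot w$ for some $X\in\g$, and show it already lies in $\h\cdot w$. The natural tool is the Cartan decomposition $\g = \lk\oplus\p$ together with the compatible decomposition $\h = \lk_H\oplus\p_H$, where $\p_H = \h\cap\p$. The key move is to decompose $X$ along $\g = \h\oplus\h^\perp$ (orthogonal with respect to $\langle\langle\,,\,\rangle\rangle$) and reduce to showing that the $\h^\perp$-component contributes nothing tangent to $W$. Here I expect to use the detection hypothesis through the relation $m_G(w) = m_H(w)\in\h$, which says that the moment map ``sees only $\h$'' along $W$; concretely, for any $Y\in\h^\perp$ one has $\langle Y\cdot w, w\rangle = \tfrac12\langle\langle m_G(w), Y\rangle\rangle = 0$, and more refined versions of this orthogonality should force the $\p_H^\perp$-part of $X\cdot w$ to point out of $T_wW$ unless it vanishes.

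The main obstacle will be converting the single scalar identity $\langle Y\cdot w,w\rangle = 0$ into a full statement about the tangent vector $Y\cdot w$ itself. One pointwise inner product is far weaker than what is needed. The right way to upgrade it is to differentiate: consider the function $w'\mapsto m_G(w')$ restricted to $W$ and use that $m_G|_W = m_H|_W$ takes values in $\h$, so its differential at $w$ in any direction $v\in T_wW$ again lands in $\h$. Differentiating the defining identity $\langle\langle m_G(w'), Y\rangle\rangle = 2\langle Y\cdot w', w'\rangle$ in a direction $v\in T_wW$ yields $\langle\langle (dm_G)_w(v), Y\rangle\rangle = 2\langle Y\cdot v, w\rangle + 2\langle Y\cdot w, v\rangle$, and the left side vanishes for $Y\in\h^\perp$. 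This gives the symmetry/orthogonality relations needed to pin down which directions $X\cdot w$ can simultaneously lie in $T_wW$, and comparing dimensions or using the $\theta$-invariance of the metric (so that $\langle Y\cdot w, v\rangle = \langle w, Y^*\cdot v\rangle = -\langle w, \theta(Y)\cdot v\rangle$) should close the argument.

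For the projective statement, the same scheme applies after replacing $T_w(G\cdot w)$ by $T_{[w]}(G\cdot[w])$, which is $(\g\cdot w + \mathbb C w)/\mathbb C w$, i.e. the image of $\g\cdot w$ in $T_{[w]}\mathbb PV = V/\mathbb C w$. The homogeneity $m(cv) = |c|^2 m(v)$ ensures the detection hypothesis descends to $\mathbb PW$, and the quotient by the line $\mathbb C w$ interacts cleanly with the decomposition $\g = \h\oplus\h^\perp$ since $w\in\h\cdot w + \mathbb C w$ contributes only to the $\h$-part; I would simply run the affine argument and push everything through the quotient map $V\to V/\mathbb C w$, checking that the extra $\mathbb C w$ factor does not mix the $\h$ and $\h^\perp$ contributions.
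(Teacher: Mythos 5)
Your central idea is exactly the paper's: since the detection hypothesis holds at \emph{every} point of $W$, the scalar identity $\langle Y\cdot w',w'\rangle=\tfrac12\langle\langle m_G(w'),Y\rangle\rangle=0$ for $Y\perp\h$ can be differentiated along curves in $W$, producing orthogonality relations between $(\g\ominus\h)\cdot w$ and $T_wW$; your reduction of the projective statement to the affine one via the cone condition $\mathbb Cw\subset T_wW$ also matches the paper. However, there is a genuine gap in how you propose to close the affine argument. Your differentiated identity $\langle Y\cdot v,w\rangle+\langle Y\cdot w,v\rangle=0$ only carries information when $Y$ is \emph{symmetric}, i.e.\ when $Y\in\p\ominus\p_H$ (the $\p$-part of $\h^\perp$), in which case it gives $2\langle Y\cdot w,v\rangle=0$. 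For the skew part $Y\in\lk\ominus\lk_H$ it is vacuous: $\langle Y\cdot v,w\rangle=\langle v,Y^*\cdot w\rangle=-\langle Y\cdot w,v\rangle$, so the identity reads $0=0$ for \emph{any} $v,w$ whatsoever -- indeed the moment map is $\p$-valued and can never see $\lk$-directions. Consequently neither the $\theta$-invariance manipulation you suggest nor a dimension count can control $Y_{\lk}\cdot w$, and without that your final decomposition step fails: writing $X=X_1+Y$ with $X_1\in\h$, $Y\in\h^\perp$, you can only cancel the component of $Y\cdot w$ coming from the $\p$-part of $Y$, not the $\lk$-part.

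The missing ingredient, which is how the paper finishes, is the complex structure. Since $W$ is a complex subvariety smooth at $w$, $T_wW$ is a complex subspace; the $\g$-action is $\mathbb C$-linear and $\langle\cdot,\cdot\rangle$ is $i$-invariant; and because $\p=i\lk$ and $\p_H=i\lk_H$, one has $\g\ominus\h=(\p\ominus\p_H)\oplus i(\p\ominus\p_H)$. Hence for $Y=iZ$ with $Z\in\p\ominus\p_H$ and $v\in T_wW$,
\begin{equation*}
\langle Y\cdot w,v\rangle=\langle i(Z\cdot w),v\rangle=\langle Z\cdot w,-iv\rangle=0,
\end{equation*}
since $-iv\in T_wW$. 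This upgrades the orthogonality from $\p\ominus\p_H$ to all of $\g\ominus\h$, after which your decomposition argument closes as intended. Note this is also why the paper proves the lemma over $\mathbb C$ and obtains the real case of the theorems separately via Borel--Harish-Chandra: a purely real version of your argument runs into precisely this skew-part obstruction.
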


\begin{proof}[Proof of the lemma] To show equality, we show containment in both directions.  One direction is trivial.  Since $H$ preserves $W$, we immediately have $T_w(H\cdot w) \subset T_w(G\cdot w) \cap T_w W$ which then implies $T_{[w]}(H\cdot [w]) \subset T_{[w]}(G\cdot [w])\cap T_{[w]} \mathbb PW$.  The reverse containments are shown below.

We prove the lemma at the affine level first.  By translation, we have the following identifications
    \begin{eqnarray*} T_w(G\cdot w) &\simeq& \g \cdot w \\
        T_w(H\cdot w) &\simeq& \mathfrak h \cdot w \end{eqnarray*}

Recall that our Cartan decompositions $\g = \lk \oplus \p$ and $\h = \lk_H\oplus \p_H$ satisfy $\lk_H \subset \lk$, $\p=i\lk$, and $\p_H = i\lk_H$.  Take $X\in \p \ominus \p_H$, the orthogonal compliment of $\p_H$ in $\p$, then
    $$0=<X,m_G(w)> = <X\cdot w,w> \mbox{ for all } w\in W$$
as $m_G(w)=m_H(w) \in \h$.  Consider $v_w\in T_wW$ and a curve $\gamma (t)\in W$ tangent to $v_w$ at $t=0$; that is $\gamma(0)=w$ and $\gamma'(0)=v$.  Applying the above equation to this curve and differentiating we obtain
    \begin{eqnarray*}
        0 &=& \restrictto{\frac{d}{dt}}{t=0}<X\cdot\gamma(t),\gamma(t)>\\
          &=& <X\cdot v,w>+<v,X\cdot w>\\
          &=& 2<X\cdot w,v>
          \end{eqnarray*}
as $X=X^t\in \p \ominus \p_H \subset \p$ is symmetric with respect to $<,>$.  Thus, $X\cdot w \perp T_wW$ for $w\in W$ and $X\in \p \ominus \p_H$.

Since $T_wW$ is a complex vector space and $<,>$ is $i$-invariant, for $w\in W$, $v_w\in T_wW$, and $X\in \p \ominus \p_H$ we have $0=<X\cdot w, -iv>=<iX\cdot w,v>={<(iX)\cdot w,v>} $ as   $\g$ acts $\mathbb C$-linearly.  Since $\g \ominus \h = (\lk \oplus \p)\ominus (\lk_H \oplus \p_H)=\mathbb C -span \{ \p \ominus \p_H \}$ we have
    \begin{eqnarray}\label{eqn: Xw perp W}X\cdot w \perp T_wW \mbox{ for } w\in W, \ X\in \g\ominus \h \end{eqnarray}

Now pick $X\in \g$ such that $X\cdot w \in \g \cdot w \cap T_wW$.  Writing $X=X_1 + X_2 \in \h \oplus (\g \ominus \h)$, we obtain $X_1\cdot w + X_2\cdot w \in T_wW$ which then implies $X_2\cdot w \in T_wW$ as $H$ acts on $W$ by hypothesis.  Here we are using the smoothness of $W$ to insure that $X_2\cdot w = X\cdot w-X_1\cdot w \in T_wW$.  But then $X_2\cdot w = 0$ by Equation \ref{eqn: Xw perp W}.  Hence, $X\cdot w = X_1 \cdot w \in \h \cdot w$ as desired.  This proves the lemma in the affine case.\\

To prove the second part of the lemma, we  reduce to the first part.  Recall that the map $\pi : V\to \mathbb PV$ is a submersion and the restrictions $\pi : G\cdot v \twoheadrightarrow G\cdot [v]$ and $\pi : W \twoheadrightarrow \mathbb PW$ are surjective. Let $\pi _* : TV \to T\mathbb PV$ denote the induced map on the tangent bundles and recall that  if $v_w \in T_w V$ is such that $\pi_*(v_w) = 0 \in T_{[w]} \mathbb PV$, then $v\in \mathbb C<w>$.

Consider $X_{[w]} \in T_{[w]} (G\cdot [w]) \cap T_{[w]} \mathbb PW$.  Then there exist $X'_w \in T_w(G\cdot w)$ and $X''_w \in T_wW$ such that $X_{[w]}=\pi_*(X'_w) = \pi_*(X''_w) \in T_{[w]}\mathbb PW$.  This implies $\pi_*(X'_w - X''_w) = 0 \in T_{[w]}\mathbb PW$, which implies $X'_w - X''_w \in \mathbb C<w>_w$, which then implies $X'_w \in X''_w +\mathbb C<w>_w \subset T_wW$ as $W$ is a cone and smooth at $w\neq 0$.

That is, $X'_w \in T_w (G\cdot w) \cap T_wW = T_w (H\cdot w)$ by the first part of the lemma and so $X_{[w]} = \pi_*(X'_w) \in \pi_*(T_w H\cdot w) = T_{[w]}(H\cdot [w])$.  Thus, $T_{[w]}(G\cdot [w]) \cap T_{[w]} \mathbb PW = T_{[w]}(H\cdot [w])$.

\end{proof}

As the lemma is now proven, we continue with the proof of the theorems.  If we knew $G\cdot w \cap W$ were smooth, then by dimension arguments and the lemma above it would be easy to establish that $G\cdot w \cap W \simeq H\cdot w$ locally near $w$.  A priori, however, we cannot guarantee smoothness; obviously the theorems show smoothness a posteriori.  We recall the following basic proposition on algebraic group actions; for a proof see \cite[I.1.8]{Borel:LinAlgGrps}.

\begin{prop}\label{prop: G actions on varieties}Let $G$ be an  algebraic group acting on a variety $X$.  Then for $x\in X$, the boundary $\partial (G\cdot x) = \overline{G\cdot x}-G\cdot x$ consists of $G$-orbits of strictly lesser dimension.  Moreover, the orbit $G\cdot x$ is open and dense in $\overline {G\cdot x}$.  The Hausdorff and Zariski closures of the orbit coincide. \end{prop}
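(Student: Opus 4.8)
The plan is to treat the three assertions in turn, deriving the boundary statement and the coincidence of closures from the single key fact that the orbit is locally closed, i.e.\ open in its Zariski closure $\overline{G\cdot x}$. Throughout I use that each $g\in G$ acts on $X$ as an automorphism of varieties (in particular a homeomorphism), so that it permutes the orbit and preserves its closure.

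First I would show that $G\cdot x$ is open, hence dense, in $\overline{G\cdot x}$. Consider the orbit morphism $\alpha\colon G\to X$, $g\mapsto g\cdot x$, whose image is $G\cdot x$. By Chevalley's theorem the image of a morphism of varieties is constructible, and a constructible set that is dense in the variety $\overline{G\cdot x}$ must contain a nonempty Zariski-open dense subset $U\subseteq\overline{G\cdot x}$ (necessarily meeting every irreducible component). Since $g\cdot G\cdot x=G\cdot x$ forces $g\cdot\overline{G\cdot x}=\overline{G\cdot x}$, each translate $g\cdot U$ is again open in $\overline{G\cdot x}$. As every point of the orbit lies in some $g\cdot U$, we get $G\cdot x=\bigcup_{g\in G} g\cdot U$, which is open in $\overline{G\cdot x}$; density is then automatic. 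This is the crux of the argument.

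Next, for the boundary, I note that $\partial(G\cdot x)=\overline{G\cdot x}\setminus G\cdot x$ is closed (the complement of an open set) and $G$-stable (the difference of two $G$-stable sets), so it is a union of $G$-orbits. Because $G\cdot x$ is a finite union of $G_0$-translates of equal dimension, $\overline{G\cdot x}$ has pure dimension $\dim G\cdot x$, and the dense orbit meets every component; hence $\partial(G\cdot x)$ contains no irreducible component and satisfies $\dim\partial(G\cdot x)<\dim G\cdot x$. Any orbit $G\cdot y\subseteq\partial(G\cdot x)$ therefore obeys $\dim G\cdot y\le\dim\partial(G\cdot x)<\dim G\cdot x$, as claimed.

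Finally, for the coincidence of the closures, the inclusion of the Hausdorff closure in the Zariski closure is automatic, since Zariski-closed sets are Hausdorff-closed. For the reverse I would reuse the open dense $U\subseteq\overline{G\cdot x}$ produced above: the complement of a proper Zariski-closed subset of a variety is dense in the Hausdorff topology, so $U$—and a fortiori $G\cdot x\supseteq U$—is Hausdorff-dense in $\overline{G\cdot x}$, forcing the two closures to agree. The main obstacle is the first step: local closedness rests on Chevalley constructibility together with the homogeneity trick of spreading a single open set around by the group action, and the only delicate points are the equidimensionality of $\overline{G\cdot x}$ and the bookkeeping needed when $G$ is disconnected.
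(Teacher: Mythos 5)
Your proof is correct, but note that the paper contains no proof of this proposition to compare against: it is quoted as a known fact with a citation to \cite[I.1.8]{Borel:LinAlgGrps}, and your argument is essentially the classical one from that reference. The chain you use --- Chevalley constructibility of the orbit, hence a dense open subset $U$ of the closure, spread around by translations to get that the orbit is open in its closure; then the boundary is closed, $G$-stable, and meets no irreducible component of the (pure-dimensional) closure, hence has strictly smaller dimension --- is exactly the standard argument, and your bookkeeping for disconnected $G$ (equidimensionality of $\overline{G\cdot x}$ via finitely many $G_0$-translates, the dense orbit meeting every component) is sound. One caveat deserves to be made explicit: your proof is a proof over $\mathbb{C}$ (or any algebraically closed field for the first two assertions), not over $\mathbb{R}$. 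Chevalley's theorem fails for real points (the image of $t\mapsto t^2$ on $\mathbb{R}$ is not constructible), and your key claim in the last step --- that the complement of a proper Zariski-closed subset is Hausdorff-dense --- holds for irreducible complex varieties (by a smooth-point or identity-theorem argument, which is standard but is itself a real ingredient worth citing) yet fails over $\mathbb{R}$, where an irreducible real curve such as $y^2=x^2(x-1)$ has an isolated point; correspondingly, Hausdorff and Zariski closures of real orbits need not coincide. This restriction is consistent with how the paper uses the proposition, namely only in its complex arguments (the proof of Lemma \ref{lemma: Jantzen}), with the passage to $\mathbb{R}$ handled afterwards by Theorem \ref{thm: BHC}, but your write-up should say so.
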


\begin{lemma}\label{lemma: Jantzen}  Let $G$ be an algebraic group and let $H$ be a closed subgroup of $G$.  Let $G$ act on a variety $V$ and assume that $W$ is an $H$-stable subvariety.  Suppose the following is true for every $w\in W$
    $$ T_w (G\cdot w)\cap T_wW = T_w(H\cdot w)$$
Then the intersection of each $G$-orbit with $W$ is a union of finitely many $H$-orbits.
\end{lemma}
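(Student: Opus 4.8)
The plan is to fix a single $G$-orbit, intersect it with $W$, and use the tangent-space hypothesis to show that this intersection is smooth and that each $H$-orbit inside it is open; the asserted finiteness then follows formally from the Noetherian property of varieties. Concretely, I would fix $w\in W$, write $\mathcal O=G\cdot w$, and let $Z=(\mathcal O\cap W)_{\mathrm{red}}$ denote the reduced intersection. Since $H\subset G$ stabilizes $\mathcal O$ and $H$ stabilizes $W$ by hypothesis, $H$ acts on $Z$; in particular, for every $w'\in Z$ one has $\mathcal O=G\cdot w'$, and the orbit $H\cdot w'$ is a smooth, locally closed subvariety of $Z$ (orbits being smooth and open in their closures, cf. Proposition \ref{prop: G actions on varieties}). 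The whole argument reduces to the single claim that each such $H\cdot w'$ is \emph{open} in $Z$.

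Next I would establish, at each point $w'\in Z$, the equality of Zariski tangent spaces $T_{w'}Z=T_{w'}(H\cdot w')$ by a sandwiching argument. The set-theoretic inclusions $H\cdot w'\subseteq Z\subseteq\mathcal O$ and $Z\subseteq W$ induce
\[
 T_{w'}(H\cdot w')\ \subseteq\ T_{w'}Z\ \subseteq\ T_{w'}(G\cdot w')\cap T_{w'}W,
\]
and the hypothesis of the lemma (applied at $w'\in W$) identifies the right-hand side with $T_{w'}(H\cdot w')$, forcing all three spaces to coincide. This step is purely formal and requires no smoothness of $W$, as it uses only the tangent-space inclusions coming from inclusions of varieties, not any scheme-theoretic intersection formula.

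The crux is the ensuing dimension count. Using that $H\cdot w'$ is a subvariety of $Z$ through $w'$ of dimension $\dim(H\cdot w')$, that the local dimension of a variety is bounded above by its Zariski tangent dimension, and the tangent-space equality just proved, I would chain
\[
 \dim(H\cdot w')\ \le\ \dim_{w'}Z\ \le\ \dim T_{w'}Z\ =\ \dim(H\cdot w').
\]
Hence all are equal: $w'$ is a \emph{smooth} point of $Z$, and near $w'$ the inclusion $H\cdot w'\hookrightarrow Z$ is a map of smooth varieties of equal dimension inducing an isomorphism on tangent spaces, so it is a local isomorphism at $w'$ (\'etale, or by the inverse function theorem). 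Thus $H\cdot w'$ is open in $Z$ in a neighbourhood of $w'$, and since $H$ permutes $Z$ by homeomorphisms transitively along the orbit, openness near one point propagates to the whole orbit; therefore $H\cdot w'$ is open, and hence clopen, in $Z$.

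Finiteness is then immediate: $Z$ is a Noetherian space with finitely many irreducible components, each of which is connected and hence contained in a single (clopen) $H$-orbit, while conversely every $H$-orbit contains some component; thus the number of $H$-orbits meeting $\mathcal O\cap W$ is at most the number of components of $Z$. I expect this dimension/openness paragraph to be the main obstacle, since one must extract the smoothness of $Z$ only \emph{a posteriori} while keeping in mind that $\mathcal O$ is merely locally closed and that $W$ may be singular, so that all tangent-space statements are Zariski-local and the local-isomorphism conclusion must be drawn in a neighbourhood of $w'$ rather than globally.
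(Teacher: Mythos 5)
Your proof is correct, and it takes a genuinely different --- and in places cleaner --- route than the paper's. Both arguments turn on the same dimension count: the tangent-space hypothesis forces each $H$-orbit to have full dimension in the intersection and hence to be open in it. But the paper executes this component by component: it decomposes $G\cdot w\cap W$ into irreducible components $X_i$, must first show (by a Hausdorff-topological argument with smooth points and closures of group neighborhoods) that $H$ preserves each $X_i$, shows that the $H$-orbit of a smooth point $p_i$ is open and dense in $X_i$, and then needs a separate sublemma --- a limiting argument taking orthonormal frames of $T_{w_n}(H\cdot p_i)$ along a sequence $w_n\to w'$ --- together with the fact that boundary orbits have strictly smaller dimension (Proposition \ref{prop: G actions on varieties}) to promote density to equality at the possibly singular points $w'$ of $X_i$. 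Your pivot is to bring the Zariski tangent space of the reduced intersection $Z$ itself into play, which the paper never does: the sandwich $T_{w'}(H\cdot w')\subseteq T_{w'}Z\subseteq T_{w'}(G\cdot w')\cap T_{w'}W=T_{w'}(H\cdot w')$ holds at every point, singular or not, so the chain $\dim(H\cdot w')\le\dim_{w'}Z\le\dim T_{w'}Z=\dim(H\cdot w')$ yields smoothness of $Z$ and openness of every $H$-orbit pointwise, with no decomposition into components, no density argument, no limits, and no appeal to the boundary-dimension proposition; finiteness then falls out of quasi-compactness of the Noetherian space $Z$. What each buys: yours is purely algebraic, shorter, and delivers the smoothness of $G\cdot w\cap W$ (the paper's subsequent corollary) at every point as a direct byproduct, and since $H_0$-orbits are connected and open it also recovers the stronger component statement of Theorem \ref{thm: finiteness of G vs H orbits}; the paper's Hausdorff-topological argument has the mild advantage of reading verbatim over $\mathbb{R}$, whereas your regularity criterion ($\dim_{w'}Z=\dim T_{w'}Z$ implies smooth) and the \'etale/local-isomorphism step are cleanest over an algebraically closed field --- a harmless restriction here, since the paper proves the complex case first and obtains the real case from Theorem \ref{thm: BHC}.
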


Clearly this lemma combined with the previous lemma completes the proof of the theorems.  We would like to thank Chuck Hague for pointing out to us that this lemma previously existed in the literature (see \cite[Lemma 2.4]{Jantzen:NilpotentOrbitsInRepresentationTheory}).\\

\textit{Proof of the lemma.}  Let $w\in W$, we can decompose the variety ${G\cdot w}\cap W = \displaystyle \cup  X_i$ into irreducible components $ X_i$.  As we are interested in the component of $G\cdot w \cap W$ that contains $w$, we only need to consider the irreducible components $X_i$ that contain $w$.\\

We begin by showing  $H$ preserves $X_i$.  Take $p_i \in X_i$ which is a smooth point of $X_i$.  Then $G\cdot p_i \cap W$ coincides with $X_i$ near $p_i$ (in the Hausdorff sense), and so $h\cdot p_i \in X_i$ for $h\in \mathcal O_e \subset H$ where $\mathcal O_e$ is a (Hausdorff) open neighborhood of the identity element $e\in H$.  Let $\mu : G \times V \to V$ denote the $G$-action on $V$.  Since $\overline{\mathcal O_e}=H$ and $\mu$ is continuous we have
    $$\mu(H,p_i) = \mu(\overline{\mathcal O_e},p_i)\subset \overline{\mu(\mathcal O_e,p_i)} \subset  X_i$$
This holds for any smooth point of $X_i$.  Let $\mathcal U$ denote the set of smooth points of $X_i$.  Then $\overline {\mathcal U}=  X_i$ (cf. \cite[Lemma 2.1.1]{Shaf2}) and as before
    $$\mu(H, X_i) = \mu(H, \overline{\mathcal U})\subset \overline{\mu(H,\mathcal U)} \subset \overline {\bigcup_{p_i\in \mathcal U}\mu(H,p_i) } \subset \overline {\bigcup_{p_i\in \mathcal U}  X_i } =
     X_i$$
This shows $H$ acts on $X_i$.

Now that $H\cdot p_i \subset X_i$ we may compare the following dimensions
    $$  \dim  H \cdot p_i \leq \dim X_i \mbox{ at } p_i \leq \dim (T_{p_i}(G\cdot p_i)\cap T_{p_i}W  ) = \dim T_{p_i}(H\cdot p_i)$$
where the second inequality follows from $T_{p_i}(G \cdot {p_i}\cap W)\subset T_{p_i}(G\cdot p_i)\cap T_{p_i}W$ and the
last equality follows from the lemma above.  Thus $\dim H \cdot p_i = \dim X_i $ and we see that $\dim H \cdot p_i $ is an (analytic) open neighborhood of $p_i$ in $X_i$.  As $ X_i$ is irreducible, we see that $\overline {H\cdot p_i} = X_i$.  Since $H\cdot p_i$ is open and dense in $\overline{H\cdot p_i}$, the same is true for $H\cdot p_i \subset X_i$.\\

Next we show $H\cdot p_i = X_i$.  Let $w'\in X_i$.  First we observe that $w'\in \overline{H\cdot p_i}$, as stated in the previous paragraph.  The next lemma will use the fact that the Hausdorff and Zariski closure of the orbit coincide (cf. Proposition \ref{prop: G actions on varieties}).

\begin{sublemma} $\dim T_{w'}(G\cdot w')\cap T_{w'}W \geq \dim H\cdot p_i$
\end{sublemma}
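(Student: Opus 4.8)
The plan is to prove the sublemma by a limit-of-tangent-spaces argument and then feed it back into the dimension count. Recall that we have already arranged $w' \in \overline{H\cdot p_i}$ and that the Hausdorff and Zariski closures of $H\cdot p_i$ coincide (Proposition \ref{prop: G actions on varieties}); so I would choose $h_n \in H$ with $h_n \cdot p_i \to w'$ in $V$. Write $\mathcal O = G\cdot w' = G\cdot p_i$ for the common orbit (both points lie in $X_i \subset G\cdot w$). Each $L_n := T_{h_n p_i}(H\cdot p_i)$ is a linear subspace of $V$ of the fixed dimension $d = \dim H\cdot p_i$, and, because $H\cdot p_i \subseteq \mathcal O$ and $H\cdot p_i \subseteq X_i \subseteq W$, we have the inclusions $L_n \subseteq T_{h_n p_i}\mathcal O$ and $L_n \subseteq T_{h_n p_i}W$. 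Since the Grassmannian $\mathrm{Gr}(d,V)$ is compact, after passing to a subsequence $L_n \to L$ with $\dim L = d$.

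The key step is to pass the two inclusions to the limit. Both $\mathcal O$ (an orbit, hence a smooth locally closed subvariety by Proposition \ref{prop: G actions on varieties}) and $W$ (smooth by hypothesis) are embedded submanifolds, and $h_n p_i \to w'$ with $w' \in \mathcal O \cap W$; thus the ambient convergence is in fact convergence within each of $\mathcal O$ and $W$, and the smoothly varying tangent spaces converge, $T_{h_n p_i}\mathcal O \to T_{w'}(G\cdot w')$ and $T_{h_n p_i}W \to T_{w'}W$. Because the incidence relation $\{(A,B) : A \subseteq B\}$ is closed in a product of Grassmannians, the inclusions survive, giving $L \subseteq T_{w'}(G\cdot w')$ and $L \subseteq T_{w'}W$. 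Hence $L \subseteq T_{w'}(G\cdot w')\cap T_{w'}W$ and $\dim\bigl(T_{w'}(G\cdot w')\cap T_{w'}W\bigr) \geq \dim L = d = \dim H\cdot p_i$, which is the sublemma. In the projective setting one runs the identical argument inside the Grassmann bundle of $T\mathbb PV$, invoking the projective form of Lemma \ref{lemma: TGw cap TW = THw}.

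I expect the main obstacle to be precisely this passage to the limit, which rests on two continuity facts: that $x \mapsto T_x M$ is continuous on a smooth, embedded, locally closed $M$, and that $h_n p_i \to w'$ in $V$ really is convergence within $\mathcal O$ and within $W$ rather than merely in the ambient space. The local closedness of orbits (Proposition \ref{prop: G actions on varieties}) and the smoothness of the subvariety $W$ are exactly what force the ambient convergence to coincide with convergence in the submanifold, so these are the points I would isolate with care; everything else is bookkeeping with dimensions.

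Finally, the sublemma closes the proof of the theorems as follows. Combining it with Lemma \ref{lemma: TGw cap TW = THw} gives $\dim H\cdot w' = \dim T_{w'}(H\cdot w') = \dim\bigl(T_{w'}(G\cdot w')\cap T_{w'}W\bigr) \geq \dim H\cdot p_i$, while $H\cdot w' \subseteq X_i$ yields $\dim H\cdot w' \leq \dim X_i = \dim H\cdot p_i$; hence $\dim H\cdot w' = \dim H\cdot p_i$. Since $w' \in \overline{H\cdot p_i}$ and boundary orbits have strictly smaller dimension (Proposition \ref{prop: G actions on varieties}), $w'$ cannot lie in $\partial(H\cdot p_i)$, so $w' \in H\cdot p_i$. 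As $w' \in X_i$ was arbitrary, $H\cdot p_i = X_i$, completing the proof of Theorems \ref{thm: finiteness of G vs H orbits} and \ref{thm: finiteness of G vs H orbits on PV}.
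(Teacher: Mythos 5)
Your proof is correct and follows essentially the same route as the paper: the paper implements your Grassmannian limit concretely, by choosing orthonormal bases of $T_{w_n}(H\cdot p_i)$ along a sequence $w_n \to w'$ and passing to a convergent subsequence, so that the limiting orthonormal vectors lie in $T_{w'}(G\cdot w')\cap T_{w'}W$. Your explicit isolation of the continuity of $x\mapsto T_xM$ on embedded submanifolds and of the local closedness of orbits just makes precise what the paper leaves implicit.
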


\begin{proof}[Proof of the sublemma] By hypothesis $G\cdot w' = G\cdot p_i$ and there exists $w_n\in H\cdot p_i$ such that $w_n \to w'$ as $w'$ is in the Hausdorff closure of $H\cdot p_i$.

Pick an orthonormal basis $\{ X_j^n \}_{j=1}^r$ of $T_{w_n}H\cdot w_n = T_{w_n}H\cdot p_i$ where $r=\dim H\cdot p_i$.  By passing to a subsequence we may assume $X_j^n\to X_j$ as $n\to \infty$, for $j=1,\dots , r$.  Since $G\cdot w'=G\cdot p_i$ we have $X_j \in T_{w'} G\cdot w'$ which then implies $\{ X_j\}_{j=1}^r \subset T_{w'}(G\cdot w') \cap T_{w'}W$.  As this collection of vectors is orthonormal, the sublemma is proven.

\end{proof}

By hypothesis and the above work, we have
    $$\dim H\cdot w' = \dim T_{w'}(H\cdot w') = \dim(T_{w'}G\cdot w' \cap T_{w'}W) \geq \dim H\cdot p_i$$
But $w'\in \overline{H\cdot p_i}$.  Proposition \ref{prop: G actions on varieties} implies $\dim H\cdot w' \leq \dim H\cdot p_i$ with equality if and only if $H\cdot w'=H\cdot p_i$. Applying this fact
and the inequality above, we see that $H\cdot w'=H\cdot p_i$.  Since $w'\in X_i$ was arbitrary we have $H\cdot p_i =X_i$.  We observe that we have shown $X_i = H\cdot w'$ for any $w'\in X_i$.\\

Recall that we have decomposed $G\cdot w \cap W = \cup X_i$ into irreducible components.  If an irreducible component $X_i$ contains $w$ then $X_i = H\cdot w$, as shown above. Thus the topological component of $G\cdot w \cap W$ containing $w$ is $\displaystyle \bigcup_{ \{X_i| \ w\in X_i\} } X_i = \bigcup_{} H\cdot w = H\cdot w$.  This proves the lemma and the proofs of our theorems are complete.\\

\textit{Remark.}  We point out that this technique of using compatible subgroups to study $G\cdot w \cap W$ does not completely detect the phenomenon of the intersection being a finite union of $H$-orbits.  There do exist examples where $W$ is an $H$-stable subspace, each intersection $G\cdot w \cap W$ is finite union of $H$-orbits, but that the $G$-action is not $H$-detectable along $W$ for any choice of inner products on $V$ and $\g$.  This is proven by constructing $H$ and $W$ so that the $G$-orbit through a generic point of $W$ is closed, but so that the $H$-orbit through said points is not closed (cf. Corollary \ref{cor: G dist iff H dist}).

\subsection*{The question of closed orbits of subgroups}

Let $G$ be a reductive group acting rationally on $V$.  Let $H$ be a reductive subgroup.  If $G\cdot v$ is closed, then it is known that $H\cdot gv$ is closed for generic $g\in G$.  This problem has been worked on by many people, see, e.g., \cite{Luna:ClosedOrbitsofReductiveGroups}, \cite{Nisnevich:StabilityAndIntersections}, \cite{Vinberg:StabilityOfReductiveGroupActions}, or \cite{Jablo:GoodRepresentations}.  However, it is not true for all $g\in G$ that $H\cdot gv$ must be closed.  It is an interesting problem to try and determine when the orbit of $H$ is closed.

\begin{cor} Suppose there exists an $H$-stable smooth (closed) subvariety $W$ along which $G$ is $H$-detected.  If $G\cdot w$ is closed, then so is $H\cdot w$.\end{cor}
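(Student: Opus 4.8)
The plan is to reduce the statement to the closed-orbit criterion for $H$ (the Kempf--Ness--Richardson--Slodowy theorem recalled above): $H\cdot w$ is closed if and only if $H\cdot w$ meets $m_H^{-1}(0)$. So it suffices to produce a point $w^*\in H_0\cdot w$ with $m_H(w^*)=0$; and since such a $w^*$ lies in $W$, where $m_H=m_G$ by $H$-detectability, it is equivalent to find $w^*\in H_0\cdot w$ with $m_G(w^*)=0$. To locate it I would run the negative gradient flow of $\|m_G\|^2$ on $V$ starting at $w$, whose velocity at a point $v$ is a negative multiple of $m_G(v)\cdot v$. First I record two tangency facts. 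Because $m_G(v)\in\g$, the vector $m_G(v)\cdot v$ lies in $\g\cdot v=T_v(G\cdot v)$, so the flow is tangent to the $G$-orbit and, the orbit being closed, stays inside $G\cdot w$. Moreover, for $v\in W$ we have $m_G(v)=m_H(v)\in\h$, hence $m_G(v)\cdot v=m_H(v)\cdot v\in\h\cdot v\subset T_vW$ since $W$ is $H$-stable; thus the velocity field is tangent to the smooth submanifold $W$ along $W$, and the trajectory through $w\in W$ remains in $W$. Consequently the whole flow lies in $G\cdot w\cap W$.

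Next I would use closedness of $G\cdot w$ to obtain convergence. The flow is monotone: $\tfrac{d}{dt}|v|^2$ is a negative multiple of $\|m_G(v)\|^2$, so $|v|^2$ decreases along the trajectory, and on a closed orbit the flow converges to a limit $w^*=\varphi_\infty(w)$ (this is exactly where closedness enters, via the convergence theory underlying Theorem \ref{thm: disting orbits and -grad flow of m}). Being the limit of the negative gradient flow, $w^*$ is a critical point of $\|m_G\|^2$ on $V$, so its velocity vanishes, i.e. $m_G(w^*)\cdot w^*=0$. Pairing this with $w^*$ and invoking the defining relation of the moment map ($\langle\langle m_G(w^*),m_G(w^*)\rangle\rangle=2\langle m_G(w^*)\cdot w^*,w^*\rangle$) forces $\|m_G(w^*)\|^2=0$, that is $m_G(w^*)=0$. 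Since $W$ and $G\cdot w$ are closed, $w^*\in G\cdot w\cap W$.

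It then remains to place $w^*$ in the correct $H$-orbit. By Theorem \ref{thm: finiteness of G vs H orbits}, the connected components of $G\cdot w\cap W$ are precisely the $H_0$-orbits. The trajectory is connected and issues from $w$, so it lies in the component $H_0\cdot w$; as a connected component is closed in $G\cdot w\cap W$, its limit satisfies $w^*\in H_0\cdot w$. Writing $w^*=h\cdot w$ with $h\in H_0$ we get $m_H(w^*)=m_G(w^*)=0$, so $H\cdot w=H\cdot w^*$ meets $m_H^{-1}(0)$ and is therefore closed.

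I expect the main obstacle to be the convergence step: justifying that the affine negative gradient flow of $\|m_G\|^2$ on the closed orbit genuinely converges to a limit (rather than merely decreasing and staying bounded), so that its limit is an honest critical point and hence a zero of $m_G$. This is the only place where closedness of $G\cdot w$ is essential, and it is where I would lean on the established closed-orbit theory; the remaining ingredients --- tangency of the flow to $W$ and to the $G$-orbit, and the identification of the limit's orbit through Theorem \ref{thm: finiteness of G vs H orbits} --- are formal. The whole argument is the affine analogue of the proof of Corollary \ref{cor: G dist iff H dist}, with the projective flow of Theorem \ref{thm: disting orbits and -grad flow of m} replaced by the affine flow and ``distinguished point'' replaced by ``zero of the moment map.''
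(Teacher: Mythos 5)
Your proof is correct, but it is not the paper's argument: the paper disposes of this corollary in one sentence, as the special case of Corollary \ref{cor: G dist iff H dist} in which the distinguished orbit is closed. Unpacked, that route goes: $G\cdot w$ closed implies $G\cdot w$ contains a zero of $m_G$, hence is distinguished; Corollary \ref{cor: G dist iff H dist} then gives that $H\cdot w$ is distinguished; and since non-closed distinguished orbits lie in the null-cone (which here would force $0\in\overline{H\cdot w}\subset G\cdot w$, i.e.\ $w=0$), the orbit $H\cdot w$ must in fact be closed. You instead work entirely at the affine level: you run the negative gradient flow of $\|m_G\|^2$ on $V$, keep it inside $G\cdot w\cap W$ by the two tangency observations, extract a limit $w^*$ with $m_G(w^*)=0$, place $w^*$ in $H_0\cdot w$ via Theorem \ref{thm: finiteness of G vs H orbits}, and conclude by the Kempf--Ness/Richardson--Slodowy criterion applied to $H$. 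This buys a self-contained argument that never mentions projective space or distinguished points --- on $V$, unlike on $\mathbb PV$, the critical points of $\|m\|^2$ are exactly the zeros of $m$, which is why your pairing step closes the loop --- whereas the paper's route buys brevity by reusing Corollary \ref{cor: G dist iff H dist} and the projective flow theory of Theorem \ref{thm: disting orbits and -grad flow of m}. One refinement: the step you flag as the main obstacle, convergence of the affine flow, is not actually where closedness of $G\cdot w$ enters. The trajectory is bounded because $|v|^2$ decreases along it, so it exists for all positive time and, since $\|m_G\|^2$ is a polynomial, converges by {\L}ojasiewicz's theorem whether or not the orbit is closed; alternatively you can avoid single-point convergence entirely, since any $\omega$-limit point of a bounded gradient trajectory is a critical point, and that is all your argument uses. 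Closedness of $G\cdot w$ and of $W$ is what keeps the integral curve inside these sets (invariance of a closed embedded submanifold under a tangent flow) and what guarantees the limit point lies in $G\cdot w\cap W$ rather than merely in its closure.
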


This is the special case of Corollary \ref{cor: G dist iff H dist} when our distinguished orbit is closed.  We ask the following question.

\begin{question} Let $H$ be a reductive subgroup of $G$.  Let $G$ act rationally on $V$ an suppose that $G\cdot v$ and $H\cdot v$ are closed.  Do there exist inner products on $V$ and $\g$, satisfying the hypothesis of Theorem \ref{thm: finiteness of G vs H orbits}, such that $G$ is $H$-detected along the smooth subvariety $H\cdot v $?\end{question}

To the contrary we could ask

\begin{question} Does there exist $v\in V$ such that $G\cdot v$ is closed, $G$ is $H$-detected along $H\cdot v$, but $H\cdot v$ is not closed?\end{question}

As stated at the end of  the previous subsection, there do exist examples of $V,W,G,H$ such that $G$ cannot be $H$-detected along $W$ for any choice of inner products on $V$ and $\g$.

\subsection*{Real algebraic groups}

Here we explain how to obtain the above results over $\mathbb R$.  Let $G$ be a real algebraic reductive group.  The real group $G$ can be realized as the real points of a complex algebraic reductive group $G^\mathbb C$ such that $G$ is Zariski dense.  This is well-known and the construction of such a group $G^\mathbb C$ can be found in \cite{JabloDistinguishedOrbits}, for example.

The following result of Borel-Harish-Chandra (\cite[Proposition 2.3]{BHC}) is the standard way of relating the real and complex settings.  Let $V$ be real vector space on which $G$ acts linearly and rationally.  Let $V^\mathbb C = V\otimes \mathbb C$ denote the complexification, then $G^\mathbb C$ acts linearly and rationally on $V^\mathbb C$.

\begin{thm}\label{thm: BHC}  Consider $v\in V \subset V^\mathbb C$.  Then $G^\mathbb C \cdot v \cap V$ is a finite union of $G$-orbits. Moreover, the orbit $G^\mathbb C\cdot v$ is closed if and only if $G\cdot v$ is closed.
\end{thm}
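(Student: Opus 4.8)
The plan is to realize the real group as the fixed points of complex conjugation and treat the two assertions separately. Write $\sigma$ for the complex conjugation on $G^{\mathbb C}$ (resp.\ on $V^{\mathbb C}=V\otimes\mathbb C$) whose fixed-point set is $G$ (resp.\ $V$); since $G$ is Zariski dense in $G^{\mathbb C}$, the stabilizer $(G^{\mathbb C})_v$ of a point $v\in V$ is $\sigma$-stable, i.e.\ defined over $\mathbb R$. The finiteness claim I would obtain by Galois cohomology, and the closedness equivalence by combining the Kempf--Ness/Richardson--Slodowy description of closed orbits (stated above) with a dimension count using Proposition~\ref{prop: G actions on varieties}.

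For finiteness, fix $v\in V$ and suppose $g\cdot v\in V$ for some $g\in G^{\mathbb C}$. Applying $\sigma$ and using $\sigma(v)=v$ together with $\sigma(g\cdot v)=g\cdot v$ gives $\sigma(g)\cdot v=g\cdot v$, so $c(g):=g^{-1}\sigma(g)\in(G^{\mathbb C})_v$; a one-line computation gives $c(g)\,\sigma(c(g))=e$, so $c(g)$ is a $1$-cocycle for $\mathrm{Gal}(\mathbb C/\mathbb R)$ valued in $(G^{\mathbb C})_v$. I would then check that $g\mapsto[c(g)]$ descends to a well-defined injective map
\[
\{\,G\text{-orbits in } G^{\mathbb C}\cdot v\cap V\,\}\ \hookrightarrow\ H^1\big(\mathrm{Gal}(\mathbb C/\mathbb R),\,(G^{\mathbb C})_v\big).
\]
Indeed, if $g'\cdot v=h\cdot(g\cdot v)$ with $h\in G$, so $g'=hgs$ for some $s\in(G^{\mathbb C})_v$, then $\sigma(h)=h$ yields $c(g')=s^{-1}c(g)\sigma(s)$, which is exactly the coboundary relation; conversely, if $c(g')=s^{-1}c(g)\sigma(s)$ then $h:=g's^{-1}g^{-1}$ satisfies $\sigma(h)=h$ and $h\cdot(g\cdot v)=g'\cdot v$, giving injectivity. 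Finiteness then follows from the Borel--Serre theorem that $H^1(\mathrm{Gal}(\mathbb C/\mathbb R),L)$ is finite for every linear algebraic $\mathbb R$-group $L$, applied to $L=(G^{\mathbb C})_v$.

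For the closedness equivalence I would fix compatible data: a $G$-compatible real inner product $\langle\cdot,\cdot\rangle$ on $V$, extended to $V^{\mathbb C}$ by declaring $V\perp iV$ and $i$-invariance (so that it is the real part of a $G^{\mathbb C}$-compatible Hermitian form), together with a Cartan involution of $G^{\mathbb C}$ restricting to one of $G$ (Proposition~\ref{prop: cartan decomp of H and G}); I choose the inner product on $\g^{\mathbb C}=\g\oplus i\g$ to restrict to the chosen one on $\g$. For ($\Leftarrow$): if $G\cdot v$ is closed it meets $\mathfrak M=m_G^{-1}(0)$ at some $v_0$. Testing the complex moment map against $X\in\g$ gives $\langle\langle m_{G^{\mathbb C}}(v_0),X\rangle\rangle=2\langle X\cdot v_0,v_0\rangle=\langle\langle m_G(v_0),X\rangle\rangle=0$, while testing against $iX$ gives $2\langle i(X\cdot v_0),v_0\rangle=-2\langle X\cdot v_0,iv_0\rangle=0$ since $X\cdot v_0\in V\perp iV\ni iv_0$. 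Hence $m_{G^{\mathbb C}}(v_0)$ is $\langle\langle\cdot,\cdot\rangle\rangle$-orthogonal to all of $\g^{\mathbb C}=\g\oplus i\g$, so $m_{G^{\mathbb C}}(v_0)=0$ and $G^{\mathbb C}\cdot v=G^{\mathbb C}\cdot v_0$ is closed by the complex Kempf--Ness theorem. For ($\Rightarrow$): if $G^{\mathbb C}\cdot v$ is closed then $G^{\mathbb C}\cdot v\cap V$ is closed in $V$ and, by the finiteness part, equals a finite disjoint union $\bigsqcup_i G\cdot v_i$; since all $v_i$ lie in a single $G^{\mathbb C}$-orbit their $\sigma$-stable stabilizers have equal dimension, so the real orbits $G\cdot v_i$ are equidimensional. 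A closed finite union of equidimensional orbits forces each to be closed, by Proposition~\ref{prop: G actions on varieties} (boundaries consist of strictly lower-dimensional orbits), so in particular $G\cdot v$ is closed.

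The main obstacle is the direction ($\Leftarrow$): everything hinges on arranging the real and complex inner products so that $V\perp iV$ and the Cartan decompositions are genuinely compatible, which is what makes the two moment maps agree on $V$ and lets a single point $v_0$ witness closedness of both orbits; without this compatibility one cannot extract $m_{G^{\mathbb C}}(v_0)=0$ from $m_G(v_0)=0$. By contrast the finiteness part is comparatively formal once the Borel--Serre finiteness of real Galois cohomology is granted, which is the one genuinely external input.
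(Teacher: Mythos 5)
Your proposal is correct in its essentials, but note at the outset that the paper offers no proof of Theorem \ref{thm: BHC} to compare against: it is quoted as a known theorem of Borel--Harish-Chandra (\cite[Proposition 2.3]{BHC}), with the remark that the ``only if'' direction of the closedness statement requires \cite{Birkes:OrbitsofLinAlgGrps} or \cite{RichSlow}. What you have written is therefore a reconstruction of the cited literature, and it hangs together. Your finiteness argument --- the cocycle $c(g)=g^{-1}\sigma(g)$, the verified injection of the set of $G$-orbits in $G^{\mathbb C}\cdot v\cap V$ into $H^1(\mathrm{Gal}(\mathbb C/\mathbb R),(G^{\mathbb C})_v)$, and Borel--Serre finiteness --- is the standard Galois-cohomological route; Borel--Harish-Chandra's own proof instead shows that each orbit of the identity component of $G$ is open in $G^{\mathbb C}\cdot v\cap V$ and then invokes the finiteness of the number of connected components of a real algebraic set. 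Yours outsources the hard input to Galois cohomology, theirs to semialgebraic topology; both are legitimate. Your ($\Leftarrow$) direction is exactly the Richardson--Slodowy mechanism: once the metrics are compatible, a zero of $m_G$ is a zero of $m_{G^{\mathbb C}}$, and the complex Kempf--Ness theorem applies. Your ($\Rightarrow$) direction (a closed set that is a finite union of equidimensional orbits must be a union of closed orbits) is the Birkes-style argument, and your dimension count via $\sigma$-stable stabilizers is right, since a $\sigma$-stable complex subalgebra is the complexification of its $\sigma$-fixed part.

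Two steps deserve explicit flags, though neither is fatal. First, in the ($\Rightarrow$) direction you invoke Proposition \ref{prop: G actions on varieties}, but that proposition is stated for algebraic group actions over an algebraically closed field; what you actually need is its real analogue --- that the Hausdorff boundary of a $G$-orbit in $V$ consists of $G$-orbits of strictly smaller dimension --- which is true but is a separate theorem, due essentially to \cite{Birkes:OrbitsofLinAlgGrps}. Second, the metric compatibility on which, as you say, everything in ($\Leftarrow$) hinges does hold, but the reason merits one more line than you give it: if $\g=\lk\oplus\p$ with $\lk$ acting skew-symmetrically and $\p$ symmetrically on $(V,\langle\cdot,\cdot\rangle)$, then the compact real form $\lk\oplus i\p$ of $\g^{\mathbb C}$ acts skew-Hermitianly on the Hermitian extension of $\langle\cdot,\cdot\rangle$ to $V^{\mathbb C}$, so that extension is invariant under the corresponding maximal compact subgroup of $G^{\mathbb C}$ and is $G^{\mathbb C}$-compatible; its real part satisfies $V\perp iV$ automatically. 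This is the content of \cite[Sections 2 and 3]{RichSlow}, and with it your argument is complete.
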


In the last assertion, the only if direction requires the work of either \cite{Birkes:OrbitsofLinAlgGrps} or \cite{RichSlow}.  In fact, a slightly stronger version of this theorem is true.  We state this version below and refer the reader to \cite{JabloDistinguishedOrbits} for a proof.

\begin{thm}\label{thm: real dist iff compex dist} Consider $v\in V\subset V^\mathbb C$.  Then $G^\mathbb C \cdot v$ is distinguished if and only if $G\cdot v$ is distinguished.
\end{thm}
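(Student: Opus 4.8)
The plan is to reduce distinguishedness to the pointwise algebraic criterion of Lemma \ref{lemma: dist point iff m(v)v=cv} and then to prove that the two moment maps agree on real points. First I would fix convenient compatible inner products. Starting from the $G$-compatible real inner product $<,>$ on $V$, I would complexify it to a Hermitian inner product $H$ on $V^\mathbb C = V\oplus iV$ whose real part $S=\operatorname{Re}H$ restricts to $<,>$ on $V$ and makes the real decomposition $V\oplus iV$ \emph{orthogonal}. Writing the real Cartan decomposition as $\g=\lk\oplus\p$ relative to the maximal compact $K$, the complexification $G^\mathbb C$ has a maximal compact $U$ with $\lu=\lk\oplus i\p$, and $\g^{\mathbb C}=\lu\oplus i\lu$ with $i\lu = i\lk\oplus\p$. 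Since $\lk$ acts skew-symmetrically and $\p$ symmetrically on $(V,S)$, the subalgebra $\lu=\lk\oplus i\p$ acts skew-Hermitian on $(V^\mathbb C,H)$, so $H$ is $U$-invariant, i.e. $G^\mathbb C$-compatible. I would likewise complexify $<<,>>$ to an $\operatorname{Ad}U$-invariant, $\theta$-invariant inner product on $\g^{\mathbb C}$ restricting to the original on $\g$. These are legitimate $G^\mathbb C$-compatible choices and compute the same notion of distinguishedness.

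The key step is the identity $m_{G^\mathbb C}(v)=m_G(v)$ for every $v\in V$. The $-1$-eigenspace of the Cartan involution of $G^\mathbb C$ is $i\lu=i\lk\oplus\p$, so a priori $m_{G^\mathbb C}(v)$ only lies in this larger space, and by $U$-invariance of the norm we have $m_{G^\mathbb C}(v)\perp\lu$. To pin it down I would pair against $i\lk$: for $A\in\lk$ and $v\in V$ one has $A\cdot v\in V$ (as $\g$ preserves $V$), hence $(iA)\cdot v=i(A\cdot v)\in iV$, and since $V\perp iV$ we get $<<m_{G^\mathbb C}(v),iA>> = 2S(i(A\cdot v),v)=0$. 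Thus $m_{G^\mathbb C}(v)\in\p$, the real Cartan subspace. Finally, for $B\in\p$ we compute $<<m_{G^\mathbb C}(v),B>> = 2S(B\cdot v,v)=2<B\cdot v,v>=<<m_G(v),B>>$, and since both moment maps land in $\p$ and agree against all of $\p$, they are equal.

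With the identity established, the theorem follows from Lemma \ref{lemma: dist point iff m(v)v=cv} together with its evident real analogue (the same variational computation over $\mathbb{RP}(V)$ yields the critical-point condition with $c\in\mathbb R$): namely $v$ is $G^\mathbb C$-distinguished iff $m_{G^\mathbb C}(v)\cdot v=cv$ for some $c\in\mathbb C$, and $v$ is $G$-distinguished iff $m_G(v)\cdot v=cv$ for some $c\in\mathbb R$. Since $m_{G^\mathbb C}(v)=m_G(v)\in\p$ preserves $V$, the vector $m_G(v)\cdot v$ lies in $V$; writing $c=a+bi$ and using $m_G(v)\cdot v\in V$, $v\in V$, and $V\cap iV=0$, we force $b=0$ whenever $v\neq0$. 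Hence the complex scalar is automatically real, the two conditions coincide, and the reverse implication follows at once from $\mathbb R\subset\mathbb C$.

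The main obstacle is the middle step. One must be vigilant that the Cartan ``$\p$-space'' of the complex group $G^\mathbb C$ is strictly larger than the real $\p$ (it acquires the extra summand $i\lk$), and the substance of the argument is exactly that the $i\lk$-component of $m_{G^\mathbb C}(v)$ vanishes on real points $v\in V$; this is what the orthogonality $V\perp iV$ secures. The remaining delicacy is pure bookkeeping: checking that the complexified inner products on $V^\mathbb C$ and $\g^{\mathbb C}$ are simultaneously $G^\mathbb C$-compatible and restrict correctly, which is guaranteed by the standard compact-real-form construction $\lu=\lk\oplus i\p$.
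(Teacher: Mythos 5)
Your moment-map identity is correct and is the right first step: with the complexified inner products you describe, $m_{G^\mathbb C}(v)=m_G(v)\in\p$ for all $v\in V$, and consequently, for a \emph{real} point $v$, $[v]$ is a critical point of $||m_{G^\mathbb C}||^2$ if and only if it is a critical point of $||m_G||^2$ (your reality-of-$c$ argument via $V\cap iV=0$ is fine). But this pointwise equivalence is not the theorem, and the passage from it to the theorem is a genuine gap. By Definition \ref{def: distinguished point/orbit}, ``$G^\mathbb C\cdot v$ is distinguished'' means the orbit contains \emph{some} distinguished point $w_0=g\cdot v$ with $g\in G^\mathbb C$; there is no reason for $w_0$ to lie in $V$, nor to be $G$-conjugate to a point of $V$. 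Your argument therefore proves only the easy direction: if $G\cdot v$ is distinguished, its real critical point lies in $G^\mathbb C\cdot v$ as well, so the complex orbit is distinguished. The converse --- the direction with all the content --- is untouched: the sentence ``the reverse implication follows at once from $\mathbb R\subset\mathbb C$'' silently substitutes ``$v$ itself is a distinguished point'' for ``the orbit $G^\mathbb C\cdot v$ contains a distinguished point.'' Compare Theorem \ref{thm: BHC}: there, too, the implication from the complex statement to the real one (closedness of $G^\mathbb C\cdot v$ implies closedness of $G\cdot v$) is exactly the direction requiring the work of \cite{Birkes:OrbitsofLinAlgGrps} or \cite{RichSlow}, and the present theorem is stated as a strengthening of that fact.

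To close the gap you need a mechanism that moves a critical point of the complex orbit into the real locus without leaving the orbit, and this is where the gradient flow enters. Your identity shows that the negative gradient flow of $||m_{G^\mathbb C}||^2$ started at $[v]$ has velocity proportional to $m_G(w)\cdot w$ at real points, hence remains in the (compact) image of $\mathbb PV$ inside $\mathbb PV^\mathbb C$; Theorem \ref{thm: disting orbits and -grad flow of m}, applied to the distinguished orbit $G^\mathbb C\cdot [v]$ with maximal compact $U$, puts the limit in $U\cdot[w_0]\subset G^\mathbb C\cdot[v]$, so the limit is a real critical point lying in $G^\mathbb C\cdot[v]\cap \mathbb PV$. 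Even then you must still argue that this limit lies in $G\cdot[v]$ itself and not merely in some other real orbit inside $G^\mathbb C\cdot[v]\cap\mathbb PV$ --- for instance by combining Theorem \ref{thm: BHC} (all $G$-orbits in $G^\mathbb C\cdot v\cap V$ have the same dimension) with Proposition \ref{prop: G actions on varieties} (the limit lies in $\overline{G\cdot[v]}$, whose boundary orbits have strictly smaller dimension). This is essentially the argument in \cite{JabloDistinguishedOrbits}, which is where the paper sends the reader: the paper itself gives no proof of this theorem. A secondary, fixable point: the complexification of $<<,>>$ is $Ad\ U$-invariant only when $ad(\p)$ acts symmetrically on $(\g,<<,>>)$ --- true for trace or Killing-type forms as in Example \ref{ex: inner product on sl_n}, but not for every inner product allowed in Section \ref{section: prelim} --- so you should fix such a standard choice, or address why distinguishedness is independent of the choice.
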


To obtain Theorems \ref{thm: finiteness of G vs H orbits}  and \ref{thm: finiteness of G vs H orbits on PV} over $\mathbb R$, one just needs know that they are true over $\mathbb C$ and apply Theorem \ref{thm: BHC}.  To obtain Corollary \ref{cor: G dist iff H dist} over $\mathbb R$, one just needs to know that it is true over $\mathbb C$ and apply Theorem \ref{thm: real dist iff compex dist}.

\section{Applications to the Left-Invariant Geometry of Lie Groups}\label{section: applications to left-invar}

The theorems in this section can be viewed as specialized change of basis theorems.  We are interested in left-invariant Ricci soliton metrics on nilpotent Lie groups.  Such a metric is called a \textit{nilsoliton} and if a nilpotent Lie group admits such a metric, it is called an \textit{Einstein nilradical} (see \cite{LauretWill:EinsteinSolvExistandNonexist} for justification of this terminology).  

\begin{question*} Which nilpotent Lie groups are Einstein nilradicals?  If a nilpotent Lie group admits such a metric, how can one find this special metric?
\end{question*}

As a left-invariant metric on a Lie group is equivalent to an inner product on its Lie algebra, we reduce to studying Lie algebras with inner products.  The nilsoliton condition can be completely phrased at the algebra level.

There are two points of view that one can take. The first is to fix a Lie bracket on a vector space and vary the inner product, the second is to fix an inner product on a vector space and vary the Lie bracket.  We will work from the second perspective as it has produced many results and allows us to exploit tools from Geometric Invariant Theory.  This is the perspective taken by J. Lauret and others (see \cite{Lauret:CanonicalCompatibleMetric} and references therein).

These special metrics are realized as critical points of the norm squared of the moment map corresponding to a particular $GL(n,\mathbb R)$ action. Let $\N$ be an $n$-dimensional real vector space with fixed inner product $<,>$.  Consider the space $V= \wedge^2 (\N)^* \otimes \N$.  This is the space of skew-symmetric bilinear forms from $\N \times \N$ to $\N$.  The set of Lie brackets on $\N$ is a variety $\V$ in $V$.  The change of basis action of $GL(n,\mathbb R)$ on $\N$ induces the following action on $V$
    $$g\cdot \mu (X,Y) = g\mu(g^{-1}X,g^{-1}Y)$$
for $g\in GL(n,\mathbb R)$, $\mu \in V$, and $X,Y\in \mathbb R^n$.  This action preserves the variety $\V$ of Lie brackets.  Moreover, if $\mu \in \V$ is a Lie bracket on $\N$, then the orbit $GL_n \cdot \mu$ is precisely the isomorphism class of $\mu$ in $\V$.  The metric nilpotent Lie algebra with bracket $\mu$ and inner product $<,>$ is denoted $\N_\mu$ and the simply connected nilpotent Lie group with Lie algebra $\N_\mu$ and left-invariant metric corresponding to $<,>$ is denoted by $\{ N_\mu,<,> \}$ or just $N_\mu$ when $<,>$ is understood.  Let $g*<\cdot ,\cdot > = <g^{-1}\cdot , g^{-1}\cdot >$ for $g\in GL(n,\mathbb R)$, then $\{ N_{g\cdot \mu} , <,> \}$ is isometric to $\{ N_\mu , g*<,>\}$.  Thus $N_\mu$ admits a nilsoliton metric if and only if $N_{g\cdot \mu}$ is a nilsoliton for some $g\in GL(n,\mathbb R)$.  In this way we can study the set of left-invariant metrics on $N_\mu$ by studying the orbit $g\cdot \mu$ in $\mathcal V$.\\

The inner product on $\N$ extends naturally to an inner product on $V$ defined by $<\lambda, \mu> =  \sum_{ijk} <\lambda(X_i,X_j), X_k> <\mu(X_i,X_j),X_k> $, where  $\{ X_i \}$ is  any orthonormal basis of $\N$.  This inner product on $V$ is $O(n)$-invariant, where $O(n)$ is the orthogonal group relative to $<,>$ on $\N$.

A Cartan decomposition of $\mathfrak{gl}(n)$ is given by $\mathfrak{so}(n) \oplus symm(n)$ where $\mathfrak{so}(n)$ is the Lie algebra of $O(n)$, the skew-symmetric endomorphisms relative to $<,>$, and $symm(n)$ is the set of symmetric endomorphisms relative to $<,>$.  We consider the inner product $<<A,B>> = tr (AB^t)$ on $\mathfrak{gl}(n)$, where the transpose is determined by $<,>$.  This gives rise to the following moment map (see \cite[Proposition 3.5]{Lauret:CanonicalCompatibleMetric})
    \begin{equation} m(\mu) = -4 \sum_i (ad_\mu X_i)^t ad_\mu X_i + 2\sum_i ad_\mu X_i (ad_\mu X_i)^t  \end{equation}
where $\{ X_i \}$ is any orthonormal basis of $\N$.  Or equivalently, for $X,Y \in \N$,
    \begin{equation}\label{eqn: lauret on m} < m(\mu)X,Y> = -4 \sum_{ij}<\mu(X,X_i),X_j> <\mu(Y,X_i),X_j> + 2\sum_{ij}<\mu(X_i,X_j),X><\mu(X_i,X_j),Y>\end{equation}

\begin{thm}[Lauret]\label{thm: Einstein nil equiv distinguished orbit}  The Lie group $N_\mu$ (with left-invariant metric $<,>$) is a left-invariant Ricci soliton if and only if $[\mu]$ is a critical point of $||m||^2 : \mathbb PV \to \mathbb R$; here $[\mu]$ denotes the class of $\mu$ in $\mathbb PV$.  Equivalently, $N_\mu$ is an Einstein nilradical if and only if the orbit $GL(n,\mathbb R)\cdot \mu$ is distinguished.
\end{thm}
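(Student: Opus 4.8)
The plan is to translate Lauret's moment-map formula into a statement about the Ricci operator and then read off the soliton condition as the distinguished-point condition of Lemma \ref{lemma: dist point iff m(v)v=cv}. The bridge between the Riemannian and the GIT pictures is the identity $m(\mu) = 8\,\mathrm{Ric}_\mu$, where $\mathrm{Ric}_\mu$ is the Ricci endomorphism of the metric nilpotent Lie algebra $\N_\mu$. First I would recall the standard formula for the Ricci operator of a nilpotent metric Lie algebra, namely
\begin{equation*}
\langle \mathrm{Ric}_\mu X, Y\rangle = -\tfrac12\sum_{ij}\langle\mu(X,X_i),X_j\rangle\langle\mu(Y,X_i),X_j\rangle + \tfrac14\sum_{ij}\langle\mu(X_i,X_j),X\rangle\langle\mu(X_i,X_j),Y\rangle,
\end{equation*}
and compare it term by term with Equation \ref{eqn: lauret on m}. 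Since $-4 = 8\cdot(-\tfrac12)$ and $2 = 8\cdot\tfrac14$, the two symmetric operators agree up to the positive factor $8$; in particular $m(\mu)$ is a positive multiple of $\mathrm{Ric}_\mu$.

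The second ingredient is purely infinitesimal. Differentiating the change-of-basis action at the identity gives, for $A\in\mathfrak{gl}(n,\mathbb R)$, the formula $(A\cdot\mu)(X,Y) = A\mu(X,Y) - \mu(AX,Y) - \mu(X,AY)$. Two special cases do all the work: taking $A$ equal to the identity endomorphism $I$ yields $I\cdot\mu = -\mu$, and taking $A = D$ shows that $D\cdot\mu = 0$ if and only if $D$ is a derivation of $\mu$.

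The third and genuinely geometric input, which I would invoke from Lauret's earlier work, is the characterization of nilsolitons as algebraic Ricci solitons: $N_\mu$ carries a left-invariant Ricci soliton metric if and only if $\mathrm{Ric}_\mu = cI + D$ for some $c\in\mathbb R$ and some derivation $D\in\mathrm{Der}(\mu)$. This is the step I expect to be the main obstacle, since the nontrivial direction requires showing that a soliton on a nilpotent group is necessarily self-similar under a one-parameter group of automorphisms (so that its Ricci operator differs from a multiple of the identity by a derivation), rather than merely a change-of-basis computation.

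With these three facts the theorem reduces to a one-line manipulation. By Definition \ref{def: distinguished point/orbit} and Lemma \ref{lemma: dist point iff m(v)v=cv}, $[\mu]$ is a critical point of $||m||^2$ on $\mathbb PV$ exactly when $m(\mu)\cdot\mu = \lambda\mu$ for some $\lambda\in\mathbb R$. If $N_\mu$ is a nilsoliton, write $\mathrm{Ric}_\mu = cI + D$; then $m(\mu)\cdot\mu = 8\,\mathrm{Ric}_\mu\cdot\mu = 8\bigl(c\,(I\cdot\mu) + D\cdot\mu\bigr) = -8c\,\mu$, so $[\mu]$ is distinguished. Conversely, if $m(\mu)\cdot\mu = \lambda\mu$, set $c = -\lambda/8$ and $D = \mathrm{Ric}_\mu - cI$; then $D\cdot\mu = \mathrm{Ric}_\mu\cdot\mu - c\,(I\cdot\mu) = \tfrac{\lambda}{8}\mu + c\mu = 0$, so $D$ is a derivation and $\mathrm{Ric}_\mu = cI + D$, whence $N_\mu$ is a nilsoliton. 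Finally, applying this pointwise equivalence to every point of the orbit and using the isometry $\{N_{g\cdot\mu},\langle,\rangle\}\cong\{N_\mu, g*\langle,\rangle\}$ recorded above shows that $GL(n,\mathbb R)\cdot\mu$ is distinguished if and only if $N_\mu$ is an Einstein nilradical.
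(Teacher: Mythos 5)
Your proof is correct. The paper itself contains no proof of this theorem --- it is stated as Lauret's result, with the moment map formula cited from \cite{Lauret:CanonicalCompatibleMetric} --- so there is no internal argument to compare against; your reconstruction (the identity $m(\mu)=8\,\mathrm{Ric}_\mu$ obtained by term-by-term comparison of Equation \ref{eqn: lauret on m} with the nilpotent Ricci formula, the infinitesimal-action computations $I\cdot\mu=-\mu$ and $D\cdot\mu=0 \Leftrightarrow D\in\mathrm{Der}(\mu)$, and the criterion of Lemma \ref{lemma: dist point iff m(v)v=cv}) is precisely the standard argument from the cited literature, and all the constants check out. You are also right to single out the characterization of nilsolitons by $\mathrm{Ric}_\mu = cI+D$ as the one genuinely geometric input: that is an external theorem of Lauret (the nontrivial direction being that a left-invariant Ricci soliton on a nilpotent Lie group is necessarily algebraic), and since the statement under review is itself attributed to Lauret, invoking it is the appropriate level of citation rather than a gap.
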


Given $\mu \in \V$, we are interested in the problem of finding $g\in GL(n,\mathbb R)$ such that $N_{g\cdot \mu}$ is a nilsoliton.  The following theorems say that $g$ can be chosen from a subgroup of $GL(n,\mathbb R)$ that reflects natural symmetries in the metric algebra $\N_\mu$.  In this way, the following are considered specialized change of basis theorems.

\begin{thm}\label{thm: soliton via group for 1 symm der} Let $\{N, <,>\}$ be a nilpotent Lie group with left-invariant metric $<,>$.  Denote by $\{ \N, <,>\}$ the Lie algebra of $N$ with inner product $<,>$ corresponding to the left-invariant metric on $N$.

Suppose $N$ admits a left-invariant Ricci soliton metric; that is, there exists $g\in GL(\N)$ such that ${g^*<\cdot ,\cdot >=<g^{-1}\cdot , g^{-1}\cdot >}$ is a nilsoliton.  Moreover, suppose that $\{ \N, <,> \}$ admits a symmetric derivation $D\in Der(\N)$ (D is symmetric with respect to $<,>$).

Let $\N = \oplus \N_\lambda$ denote the eigenspace decomposition of $D$.  Then the element $g\in GL(\N)$ such that $g^*<,>$ is nilsoliton can actually be chosen from the subgroup $GL(\N_{\lambda_1})\times \dots \times GL(\N_{\lambda_k})$.
\end{thm}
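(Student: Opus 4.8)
The plan is to exhibit $H = GL(\N_{\lambda_1})\times\dots\times GL(\N_{\lambda_k})$ as a compatible reductive subgroup of $G=GL(\N)$ that detects the $G$-action along a suitable linear subspace containing $\mu$, and then to apply Corollary \ref{cor: G dist iff H dist} together with Lauret's Theorem \ref{thm: Einstein nil equiv distinguished orbit}. First I would fix the standard data: $G=GL(\N)$ acting on $V=\wedge^2\N^*\otimes\N$ with the $O(n)$-invariant ($G$-compatible) inner product and the Cartan decomposition $\mathfrak{gl}(\N)=\mathfrak{so}(n)\oplus symm(n)$. Since $D$ is symmetric we have $D\in symm(n)=\p$, its eigenspaces $\N_\lambda$ are mutually orthogonal, and $H$ is exactly the centralizer $Z_G(D)$, with Lie algebra $\h=Z_{\mathfrak{gl}(\N)}(D)=\ker(ad_D)$, the block-diagonal subalgebra. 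This $H$ is reductive, and it is $\theta$-stable: if $gD=Dg$ then $g^tD=Dg^t$ because $D=D^t$, so $\theta(g)=(g^t)^{-1}\in H$. Hence $H$ admits a Cartan decomposition compatible with that of $G$ in the sense of Proposition \ref{prop: cartan decomp of H and G}.

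Next I would take $W=\{\nu\in V: D\cdot\nu=0\}$, the fixed-point space of $D$ acting on $V$ through the representation. Writing out the action shows $D\cdot\nu=0$ means precisely $D\in Der(\nu)$, so $W$ is the set of brackets admitting $D$ as a derivation; it is a linear subspace, hence a closed smooth subvariety, and it contains $\mu$ by hypothesis. Using the intertwining identity $h\cdot(A\cdot\nu)=(Ad_h A)\cdot(h\cdot\nu)$ and $Ad_h D=D$ for $h\in H$, one sees that $W$ is $H$-stable.

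The heart of the argument, and the step I expect to be the main point, is verifying that $G$ is $H$-detected along $W$, i.e. $m_G(w)\in\h$ for every $w\in W$ (Definition \ref{def: G orbit H detected along W}). Since $\h=\ker(ad_D)$, its orthogonal complement in $\mathfrak{gl}(\N)$ with respect to $<<A,B>>=tr(AB^t)$ is the image of $ad_D$, so it suffices to show $<<m_G(w),[D,B]>>=0$ for all $B$. The computation one carries out, using the defining property of the moment map and the fact that the $\mathfrak{gl}(\N)$-action on $V$ is a representation, is
$$<<m_G(w),[D,B]>>=2<[D,B]\cdot w,w>=2<D\cdot(B\cdot w)-B\cdot(D\cdot w),w>=2<D\cdot(B\cdot w),w>,$$
where the last equality uses $D\cdot w=0$. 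Because $D$ is symmetric it acts as a self-adjoint operator on $(V,<,>)$ — this is exactly the content of $G$-compatibility, that $\p$ acts symmetrically — so $<D\cdot(B\cdot w),w>=<B\cdot w,D\cdot w>=0$. This gives $m_G(w)\in\h$, establishing the detection hypothesis and tying the symmetry of $D$ directly to $H$-detection.

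With all hypotheses of Corollary \ref{cor: G dist iff H dist} in place (taking $w=\mu$), the proof concludes formally. By Lauret's Theorem \ref{thm: Einstein nil equiv distinguished orbit}, $N$ admits a nilsoliton if and only if $G\cdot\mu$ is distinguished; the corollary then yields that $H\cdot\mu$ is distinguished, so there exists $h\in H$ with $h\cdot\mu$ a distinguished point. Applying Lauret's theorem once more, $N_{h\cdot\mu}$ is a nilsoliton, and since $\{N_{h\cdot\mu},<,>\}$ is isometric to $\{N_\mu,h^*<,>\}$, the metric $h^*<,>$ is the desired nilsoliton metric on $N$ with $h\in GL(\N_{\lambda_1})\times\dots\times GL(\N_{\lambda_k})$, as claimed.
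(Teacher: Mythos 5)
Your proof is correct, and its skeleton is the paper's: the same subgroup $H = GL(\N_{\lambda_1})\times\dots\times GL(\N_{\lambda_k})$, the same subspace $W$ (your $W=\{\nu : D\cdot\nu=0\}$ is literally the paper's $W$, since $D\cdot\nu=0$ unwinds to $D\nu(X,Y)=\nu(DX,Y)+\nu(X,DY)$), and the same endgame via Corollary \ref{cor: G dist iff H dist} combined with Theorem \ref{thm: Einstein nil equiv distinguished orbit}. Where you genuinely diverge is in the key step, the verification that $m_G(w)\in\h$ for $w\in W$. The paper does this concretely: it chooses an orthonormal basis of $D$-eigenvectors, proves the weight lemma $\lambda(V_\alpha,V_\beta)\subset V_{\alpha+\beta}$, and checks term by term in Lauret's explicit formula (Equation \ref{eqn: lauret on m}) that $<m(\lambda)X_\alpha,X_\beta>=0$ for eigenvectors from distinct eigenspaces. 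You instead argue structurally: $\h=\ker(ad_D)$, its orthogonal complement is $im(ad_D)$, and then the defining identity of the moment map plus the representation property gives $<<m_G(w),[D,B]>> = 2<D\cdot(B\cdot w),w> = 2<B\cdot w,D\cdot w> = 0$, using that the symmetric element $D\in\p$ acts symmetrically on $V$ and kills $w$. This is a cleaner and strictly more general argument: it never uses the specific shape of $V=\wedge^2\N^*\otimes\N$ or Lauret's formula, and it shows in one stroke that for any representation with compatible inner products and any symmetric $D\in\p$, the $G$-action is $Z_G(D)$-detected along the fixed subspace of $D$. What the paper's computation buys in exchange is the weight lemma itself, which is reused in the subsequent theorems of that section, and a verification that stays inside the concrete Lie-bracket formalism. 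One point you should make explicit, since it is the hinge of your orthogonality claim $(\ker ad_D)^\perp = im(ad_D)$: the operator $ad_D$ is self-adjoint with respect to $<<A,B>>=tr(AB^t)$, which follows from the one-line trace identity $(ad_X)^{*}=ad_{X^t}$ together with $D=D^t$.
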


\textit{Remark}. This theorem was known in the special case that the symmetric derivation $D$ is the unique derivation such that the rank 1 extension $\mathfrak{s} = <D>\oplus \N$ admits a left-invariant Einstein metric \cite[Proposition 6.8]{Heber}.  In fact, there it is shown that a slightly smaller group can be used.  However, our theorem is very useful in practice when it is not known which symmetric derivation should be used to uniquely extend to an Einstein solvmanifold.\\

\begin{cor} Let $\{N, <,>\}$ be a two-step nilpotent Lie group with Lie algebra $\N$.  Denote the center of $\N$ by $\Z$.  Then $N$ is an Einstein nilradical if and only if there exist $g\in GL(\Z^\perp)\times GL(\Z)$ such that $\{ N, g*<,> \}$ is a nilsoliton; here $\Z^\perp \subset \N$ is taken relative to $<,>$.
\end{cor}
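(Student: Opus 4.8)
The plan is to exhibit the corollary as an immediate instance of Theorem \ref{thm: soliton via group for 1 symm der}, the only real work being to produce a symmetric derivation of $\{\N,<,>\}$ whose eigenspace decomposition is exactly $\N = \Z^\perp \oplus \Z$. The natural candidate is the grading derivation of the two-step structure: let $D \in \mathrm{End}(\N)$ be defined by $D|_{\Z^\perp} = \mathrm{id}$ and $D|_{\Z} = 2\,\mathrm{id}$. Since $\Z^\perp$ and $\Z$ are orthogonal with respect to $<,>$ and $D$ acts as a real scalar on each summand, $D$ is self-adjoint; that is, $D$ is symmetric with respect to $<,>$.

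Next I would check that $D$ is a derivation, which is where the two-step hypothesis enters. Because $N$ is two-step nilpotent we have $[\N,\N] \subseteq \Z$, and $\Z$ is central. Writing $X = X_1 + X_2$ and $Y = Y_1 + Y_2$ with $X_1,Y_1 \in \Z^\perp$ and $X_2,Y_2 \in \Z$, every bracket involving a central component vanishes, so $[X,Y] = [X_1,Y_1] \in [\N,\N] \subseteq \Z$. Hence $D[X,Y] = 2[X_1,Y_1]$, while $[DX,Y] + [X,DY] = [X_1,Y_1] + [X_1,Y_1] = 2[X_1,Y_1]$, and the two agree. Thus $D \in Der(\N)$ is a symmetric derivation, and since its eigenvalues $1$ and $2$ are distinct, its eigenspace decomposition is precisely $\N_1 = \Z^\perp$ and $\N_2 = \Z$.

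With $D$ in hand the corollary follows. For the forward direction, suppose $N$ is an Einstein nilradical, i.e.\ admits a nilsoliton metric. Applying Theorem \ref{thm: soliton via group for 1 symm der} with this $D$, the element $g \in GL(\N)$ for which $g*<,>$ is a nilsoliton may be taken in $GL(\N_1) \times GL(\N_2) = GL(\Z^\perp) \times GL(\Z)$. The converse is trivial: if some $g \in GL(\Z^\perp) \times GL(\Z) \subset GL(\N)$ makes $\{N, g*<,>\}$ a nilsoliton, then $N$ admits a left-invariant Ricci soliton metric and is by definition an Einstein nilradical. The only piece of genuine content is the verification that $D$ is a derivation, and I expect no obstacle beyond correctly invoking the two-step identity $[\N,\N]\subseteq\Z$; the remainder is a direct specialization of the preceding theorem.
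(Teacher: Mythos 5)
Your proof is correct and follows exactly the paper's route: the paper likewise deduces the corollary from Theorem \ref{thm: soliton via group for 1 symm der} using the symmetric derivation equal to $Id$ on $\Z^\perp$ and $2\,Id$ on $\Z$. The only difference is that you spell out the verification (symmetry and the derivation property via $[\N,\N]\subseteq\Z$) that the paper leaves implicit.
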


\begin{proof}[Proof of Corollary]
This corollary follows immediately from the theorem as every two-step nilpotent Lie algebra admits a symmetric derivation defined by $Id$ on $\Z^\perp$ and $2 Id$ on $\Z$.
\end{proof}

\begin{proof}[Proof of Theorem \ref{thm: soliton via group for 1 symm der}]  Let $N=N_\mu$ for some $\mu \in \V$.  We will apply Corollary  \ref{cor: G dist iff H dist} for the particular representation at hand.

By hypothesis, our nilpotent Lie algebra is an Einstein nilradical and so Theorem \ref{thm: Einstein nil equiv distinguished orbit} implies that the $G=GL(n,\mathbb R)$ orbit is distinguished.  Consider the subspace $W = \{ \lambda \in V \ | \ D\lambda(X,Y) = \lambda(DX,Y) + \lambda(X,DY) \mbox{ for } X,Y\in \N \}$.  This is a vector subspace which contains $\mu$ as $D\in Der (\N_\mu)$.  \\

\textbf{Lemma.}  Let $\alpha, \beta$ be eigenvalues $D$ with corresponding eigenspaces $V_\alpha, V_\beta$, then for $\lambda \in W$,  $\lambda(V_\alpha,V_\beta) \subset V_{\alpha+\beta}$, the eigenspace corresponding to $\alpha + \beta$.\\

The proof is immediate.\\

Define $H$ to be the group $GL(\N_{\lambda_1}) \times \dots \times GL(V_{\lambda_k})$.    First observe that $H$ is closed under the metric adjoint with respect to $<,>$, hence $H$ is reductive.

For  $\lambda \in W$, we will show $m(\lambda) \in \h = LH$ by means of Equation \ref{eqn: lauret on m}.  Let $X_{\alpha}$ (resp. $X_{\beta}$) $ \in \N$ be in the $\lambda_\alpha$ (resp. $\lambda_\beta$) eigenspace of $D$, $\lambda_\alpha \not = \lambda_\beta$.  Choose an orthonormal basis $\{ X_i \}$ consisting of eigenvectors of $D$, and apply Equation \ref{eqn: lauret on m}.  This gives
    \begin{eqnarray*} <m(\lambda) X_\alpha, X_\beta> &=& -4 \sum_{ij}<\lambda(X_\alpha,X_i),X_j> <\lambda(X_\beta,X_i),X_j> \\ && + 2\sum_{ij}<\lambda(X_i,X_j),X_\alpha><\lambda(X_i,X_j),X_\beta>\\
    &=& 0
    \end{eqnarray*}
To see that this is zero, we observe that each summand is zero.  In the first summation, we have $<\mu(X_\alpha,X_i),X_j> <\mu(X_\beta,X_i),X_j> = 0$ since either $\lambda_\alpha + \lambda_i \not = \lambda_j$ or $\lambda_\beta + \lambda_i \not = \lambda_j$ as $\lambda_\alpha\not=\lambda_\beta$; here we are applying  the lemma above.  Similarly, all the terms in the second summation are zero.  Thus $m(\lambda) \in \h$ for $\lambda \in W$.

Applying Corollary \ref{cor: G dist iff H dist} together with Theorem \ref{thm: Einstein nil equiv distinguished orbit} completes the proof.
\end{proof}

\begin{thm} Consider $\{ N, <,> \}$ and a collection of symmetric derivations $\{ D_\alpha \}$ of $\N$ (relative to $<,>$).  Denote by $H^\alpha$ the group which preserves the eigenspace decomposition of $D_\alpha$, as in Theorem \ref{thm: soliton via group for 1 symm der}.  Define $H = \cap H^\alpha$.  The group $H$ is a reductive algebraic group and $N$ admits a nilsoliton metric if and only if $\{N, h*<,> \}$ is such a metric for some $h\in H$.
\end{thm}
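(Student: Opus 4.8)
The plan is to reduce this multiple-derivation statement to the single-derivation Theorem~\ref{thm: soliton via group for 1 symm der} by applying Corollary~\ref{cor: G dist iff H dist} directly, now with the intersected group $H = \cap H^\alpha$ playing the role of the detecting subgroup. First I would verify the two structural claims about $H$ separately from the geometric statement. That $H$ is algebraic is immediate: each $H^\alpha = \prod_j GL(\N^\alpha_{\lambda_j})$ is a closed subgroup of $GL(\N)$ cut out by the condition of preserving the eigenspace decomposition of $D_\alpha$, and a finite (indeed arbitrary) intersection of closed algebraic subgroups is again closed and algebraic. For reductivity I would argue, exactly as in the proof of Theorem~\ref{thm: soliton via group for 1 symm der}, that $H$ is self-adjoint with respect to $<,>$: each $D_\alpha$ is symmetric, so its eigenspaces are mutually orthogonal, whence each $H^\alpha$ is stable under the metric adjoint $g \mapsto g^*$; the intersection of self-adjoint subgroups is self-adjoint, and a self-adjoint algebraic subgroup of $GL(\N)$ is reductive with a Cartan decomposition compatible with that of $GL(n,\mathbb R)$ (cf.\ Proposition~\ref{prop: cartan decomp of H and G}).

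Next I would set up the subvariety and the detection hypothesis. As in the single-derivation case, let
$$W = \{ \lambda \in V \ | \ D_\alpha \lambda(X,Y) = \lambda(D_\alpha X, Y) + \lambda(X, D_\alpha Y) \mbox{ for all } \alpha \mbox{ and all } X,Y \in \N \},$$
the common space of brackets for which every $D_\alpha$ is a derivation. This is a linear subspace (hence a closed smooth $H$-stable subvariety) containing $\mu$, since each $D_\alpha \in Der(\N_\mu)$ by hypothesis. The crucial computation is that $G = GL(n,\mathbb R)$ is $H$-detected along $W$, i.e.\ $m(\lambda) \in \h$ for $\lambda \in W$. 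I would establish this by showing $<m(\lambda) X_\alpha, X_\beta> = 0$ whenever $X_\alpha, X_\beta$ are homogeneous vectors lying in eigenspaces that are \emph{separated by at least one} $D_\alpha$, using Equation~\ref{eqn: lauret on m} together with the grading lemma $\lambda(V_\mu, V_\nu) \subset V_{\mu+\nu}$ applied one derivation at a time. The point is that $\h = LH$ consists precisely of the endomorphisms commuting with every $D_\alpha$, equivalently those whose matrix is block-diagonal with respect to the \emph{common refinement} of all the eigenspace decompositions; two common-refinement blocks are distinct exactly when some single $D_\alpha$ separates them, which is enough to kill the relevant summands by the argument already used in Theorem~\ref{thm: soliton via group for 1 symm der}.

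With $H$ reductive and compatibly decomposed, $W$ closed smooth and $H$-stable, and $G$ shown to be $H$-detected along $W$, the geometric conclusion follows by the same final move as before: by Theorem~\ref{thm: Einstein nil equiv distinguished orbit}, $N$ admits a nilsoliton iff $G \cdot \mu$ is distinguished, and by Corollary~\ref{cor: G dist iff H dist} (applicable since $W$ is closed) this holds iff $H \cdot \mu$ is distinguished, i.e.\ iff $\{N, h*<,>\}$ is a nilsoliton for some $h \in H$. The main obstacle I anticipate is not any single step but getting the detection computation exactly right: one must check that vanishing is forced for \emph{all} off--common-block pairs, and conversely that the common blocks really do correspond to $\h$, so that $m(\lambda)$ lands in $\h$ and not merely in some larger space. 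This amounts to verifying that the Lie algebra of $\cap H^\alpha$ is the simultaneous commutant $\cap \ker(ad\, D_\alpha)$ and matching it against which summands in Equation~\ref{eqn: lauret on m} survive; once the bookkeeping of the simultaneous grading is handled, the detection identity reduces termwise to the single-derivation case already treated.
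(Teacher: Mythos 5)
Your proposal is correct and follows essentially the same route as the paper: intersect the detecting groups and the derivation subspaces, verify that $H=\cap H^\alpha$ is self-adjoint (hence reductive) and algebraic, establish the detection property $m(\lambda)\in\h$ for $\lambda$ in the common subspace $W$, and conclude via Theorem \ref{thm: Einstein nil equiv distinguished orbit} and Corollary \ref{cor: G dist iff H dist}. The only difference is bookkeeping: where you check vanishing of the off-block entries of $m(\lambda)$ with respect to the common refinement of the eigenspace decompositions, the paper observes more simply that the single-derivation proof already gives $m(\lambda)\in\h^\alpha$ for each $\alpha$, and that $\h=\cap\h^\alpha$, so the detection claim follows by intersecting without any explicit common-refinement analysis.
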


\textit{Remark}.  It is not always true that the intersection of reductive groups is reductive, although such intersections are `generically' reductive.  See,  e.g., \cite{Jablo:GoodRepresentations} for an explicit example of this phenomenon.

\begin{proof}  It was shown in the proof of the previous theorem that each $H^\alpha$ is closed under the metric adjoint.  Hence, the same is true for the intersection $H$.  Moreover, $H$ is a variety as it is the intersection of varieties.  Thus, $H$ is a reductive algebraic group.  Let $\h$ be the Lie algebra of $H$, then $\h = \cap \h^\alpha$, where $\h^\alpha$ is the Lie algebra of $H^\alpha$.

Define a subspace $W_\alpha = \{ \lambda \in W \ | \ D_\alpha \lambda (X,Y) = \lambda(D_\alpha X,Y) + \lambda(X,D_\alpha Y) \mbox{ for } X,Y\in \N \}$.  Define $W = \cap W_\alpha$.  The proof of the previous theorem shows that for $\lambda \in W \subset W_\alpha$ we have $m(\lambda) \in \h^\alpha$.  Hence for $\lambda\in W$, $m(\lambda)\in \h$.

As before, applying Corollary \ref{cor: G dist iff H dist} together with Theorem \ref{thm: Einstein nil equiv distinguished orbit} completes the proof.
\end{proof}

\begin{thm}\label{thm: N = N1+N2 and einstein condition} Suppose that $\N$ is a nilpotent algebra that can be written as a sum of ideals $\N = \N_1 \oplus \N_2$.  Then $N$ is an Einstein nilradical if and only if both $\N_1, \N_2$ are Einstein nilradicals.
\end{thm}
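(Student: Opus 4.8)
The plan is to recognize $N=N_\mu$ as an instance of Corollary \ref{cor: G dist iff H dist} for the Lauret representation, in the same spirit as Theorem \ref{thm: soliton via group for 1 symm der}. Write $N=N_\mu$ with $\mu\in\V\subset V=\wedge^2\N^*\otimes\N$ and fix an inner product on $\N$ making $\N_1\perp\N_2$. Since $\N=\N_1\oplus\N_2$ is a direct sum of ideals we have $\mu(\N_1,\N_2)=0$ and $\mu(\N_i,\N_i)\subseteq\N_i$ for $i=1,2$. Put $G=GL(\N)$, let $H=GL(\N_1)\times GL(\N_2)$ be the block-diagonal subgroup---self-adjoint with respect to $<,>$, hence reductive with a compatible Cartan decomposition---and set
$$W=\{\lambda\in V \ | \ \lambda(\N_1,\N_2)=0,\ \lambda(\N_i,\N_i)\subseteq\N_i,\ i=1,2\}\cong V_1\oplus V_2,$$
where $V_i=\wedge^2\N_i^*\otimes\N_i$. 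Then $W$ is a linear subspace (so closed and smooth), it is $H$-stable, and $\mu\in W$.

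First I would check that $G$ is $H$-detected along $W$, that is $m(\lambda)\in\h=\mathfrak{gl}(\N_1)\oplus\mathfrak{gl}(\N_2)$ for $\lambda\in W$. Choosing an orthonormal basis $\{X_i\}$ of $\N$ adapted to $\N_1\oplus\N_2$ and applying Equation \ref{eqn: lauret on m}, I compute $<m(\lambda)X,Y>$ for $X\in\N_1$, $Y\in\N_2$. In the first sum $\lambda(X,X_i)\neq 0$ forces $X_i\in\N_1$ while $\lambda(Y,X_i)\neq 0$ forces $X_i\in\N_2$, so each summand vanishes; in the second sum $\lambda(X_i,X_j)$ is nonzero only when $X_i,X_j$ lie in a common $\N_k$, and then $\lambda(X_i,X_j)\in\N_k$ pairs nontrivially with at most one of $X,Y$. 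Hence $m(\lambda)$ is block-diagonal and lies in $\h$. This is the same routine computation as in Theorem \ref{thm: soliton via group for 1 symm der}. Corollary \ref{cor: G dist iff H dist} then gives that $G\cdot\mu$ is distinguished if and only if $H\cdot\mu$ is distinguished, and Theorem \ref{thm: Einstein nil equiv distinguished orbit} identifies the former with $N$ being an Einstein nilradical.

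It remains to match distinguishedness of $H\cdot\mu$ with the simultaneous distinguishedness of the two factors. Because $H=GL(\N_1)\times GL(\N_2)$ acts on $W\cong V_1\oplus V_2$ as the product action (the $i$-th factor acting on $V_i$ in the standard way and trivially on the other summand) and $V_1\perp V_2$ in $W$, the moment map splits as $m_H(\lambda_1,\lambda_2)=(m_1(\lambda_1),m_2(\lambda_2))$, where $m_i$ is the moment map of $GL(\N_i)$ acting on $V_i$. Writing $\mu=\mu_1\oplus\mu_2$ and using Lemma \ref{lemma: dist point iff m(v)v=cv}, distinguishedness of $H\cdot\mu$ amounts to the existence of $\nu_1\oplus\nu_2\in H\cdot\mu$ with $m_i(\nu_i)\cdot\nu_i=c\,\nu_i$ for both $i$ and a common scalar $c$. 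The forward direction is then immediate: restricting to each factor exhibits a distinguished point of $GL(\N_i)\cdot\mu_i$, so each $N_i$ is an Einstein nilradical.

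The main obstacle is the converse, where the two scalars need not agree. Suppose each $GL(\N_i)\cdot\mu_i$ is distinguished and choose distinguished representatives $\nu_i=g_i\cdot\mu_i$ with $m_i(\nu_i)\cdot\nu_i=c_i\nu_i$. The key point is that these scalars are forced to be positive: pairing with $\nu_i$ and using the defining identity $<<m_i(\nu_i),m_i(\nu_i)>>=2<m_i(\nu_i)\cdot\nu_i,\nu_i>$ gives $c_i=||m_i(\nu_i)||^2/(2||\nu_i||^2)>0$ whenever $\nu_i\neq 0$. Since $c_1,c_2>0$, I can rescale within $H$: the scalar $s^{-1}\,\mathrm{id}_{\N_2}\in GL(\N_2)$ sends $\nu_2$ to $s\nu_2$ and multiplies its scalar by $s^2$, so choosing $s^2=c_1/c_2$ produces $\nu_1\oplus s\nu_2\in H\cdot\mu$ with common scalar $c_1$; this point is distinguished, hence so is $H\cdot\mu$. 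The degenerate cases $\mu_i=0$ (so $\N_i$ is abelian, trivially an Einstein nilradical, with $m_i(\mu_i)=0$) impose no constraint on that factor and are handled directly. Combining both directions with Theorem \ref{thm: Einstein nil equiv distinguished orbit} completes the proof.
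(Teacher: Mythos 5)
Your proposal is correct, and its core is the same as the paper's: fix an inner product on $\N$ making $\N_1\perp\N_2$, set $W=(\wedge^2\N_1^*\otimes\N_1)\oplus(\wedge^2\N_2^*\otimes\N_2)$ and $H=GL(\N_1)\times GL(\N_2)$, verify via Equation \ref{eqn: lauret on m} that $m(\lambda)\in\h$ for all $\lambda\in W$, and then invoke Corollary \ref{cor: G dist iff H dist} together with Theorem \ref{thm: Einstein nil equiv distinguished orbit}. Where you go beyond the paper is the final step. The paper's proof ends right after the detection computation, leaving implicit the equivalence between ``$H\cdot\mu$ is distinguished'' and ``both $GL(\N_i)\cdot\mu_i$ are distinguished.'' You make this explicit: the $H$-action on $W\cong V_1\oplus V_2$ is a product action with $V_1\perp V_2$, so $m_H$ splits as $(m_1,m_2)$, and by Lemma \ref{lemma: dist point iff m(v)v=cv} distinguishedness of $H\cdot\mu$ amounts to solving $m_i(\nu_i)\cdot\nu_i=c\,\nu_i$ in both factors \emph{with a common scalar} $c$. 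The forward implication is restriction to factors; the converse genuinely needs your rescaling trick, since distinguished representatives of the two factors come with a priori different scalars $c_1,c_2$. This common-scalar issue is a real (if small) gap in the paper's own write-up, and your treatment closes it.

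One line of your converse needs tightening. The identity $<<m_i(\nu_i),m_i(\nu_i)>> \,=\, 2<m_i(\nu_i)\cdot\nu_i,\nu_i>$ yields $c_i=||m_i(\nu_i)||^2/(2||\nu_i||^2)$, which by itself gives only $c_i\geq 0$; strict positivity requires knowing $m_i(\nu_i)\neq 0$, and that is not a consequence of $\nu_i\neq 0$ via this formula alone. The claim is true, and the fix is one line: pairing against the identity gives $<<m_i(\nu_i),\mathrm{id}>> \,=\, 2<\mathrm{id}\cdot\nu_i,\nu_i> \,=\, -2||\nu_i||^2<0$, because $\mathrm{id}\in\mathfrak{gl}(\N_i)$ acts on $\wedge^2\N_i^*\otimes\N_i$ as $-\mathrm{id}$; hence $m_i(\nu_i)\neq 0$, so $c_i>0$, whenever $\nu_i\neq 0$. (This also rules out the scenario your rescaling could not repair, namely a nonzero factor with scalar $0$.) With that line added, your argument is complete.
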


\textit{Remark.}  The property of being an Einstein nilradical is a property of the Lie group (or algebra) and does not depend on the choice of inner product on $\N$.

\begin{proof} We choose to endow $\N$ with an inner product so that $\N_1$ and $\N_2$ are orthogonal.  With this choice of inner product we have $\N = \N_\mu = \N_{\mu_1}\oplus \N_{\mu_2}$ where $\mu = \mu_1 + \mu_2 \in W:= (\wedge ^2 \N_1^* \otimes \N_1) \oplus (\wedge ^2 \N_2^* \otimes \N_2) \subset \wedge ^2 \N^*\otimes \N$.  As in the previous theorems, we will apply Corollary \ref{cor: G dist iff H dist} together with Theorem \ref{thm: Einstein nil equiv distinguished orbit}.

We will show for $\lambda \in W$ that $m(\lambda)\in \mathfrak{gl}(\N_1)\times \mathfrak{gl}(\N_2)$.  We apply Equation \ref{eqn: lauret on m} by choosing an orthonormal basis which respects $\N_1\oplus \N_2$. For $X\in \N_1$, $Y\in \N_2$ we have
    $$<m(\lambda)X,Y> = -4 \sum_{ij}<\lambda(X,X_i),X_j> <\lambda(Y,X_i),X_j>  + 2\sum_{ij}<\lambda(X_i,X_j),X><\lambda(X_i,X_j),Y>$$
In the first summand, if $X_i\in \N_1$, then $\lambda(Y,X_i)=0$; if $X_i\in \N_2$, then $\lambda(X,X_i)=0$.  In the second summand, if $X_i,X_j$ are not in the same subspace $\N_1$ or $\N_2$, then $\lambda(X_i,X_j)=0$.  Now suppose, without loss of generality, that $X_i,X_j\in \N_1$.  Then we have $\lambda(X_i,X_j)\in\N_1$ with is orthogonal to $\N_2$, hence $<\lambda(X_i,X_j),Y>=0$.  In this way, we see that $<m(\lambda)X,Y>=0$ for $X\in \N_1$, $Y\in \N_2$.  Thus $m(\lambda)\in \mathfrak{gl}(\N_1)\times \mathfrak{gl}(\N_2)$ and the proof is complete.
\end{proof}

\begin{prop}  Consider $\N=\N_1\oplus \N_2$.  Let $\V_i$ denote the variety of Lie brackets on $\N_i$ (see the remarks at the beginning of this section for more information on the set of Lie brackets).  For $\mu_i\in \V_i$, consider the set of Lie brackets on $\N$ that can be written in the form $\mu = \mu_1 \oplus \mu_2$.  The moduli of such is precisely the moduli of $GL(\N_1) \times GL(\N_2)$ orbits in $\V_1 \times \V_2 \subset \V$.
\end{prop}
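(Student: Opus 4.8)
The plan is to recognize this proposition as the specialization of Theorem \ref{thm: finiteness of G vs H orbits} to the Lie-bracket representation, with $G = GL(\N)$, $V = \wedge^2\N^*\otimes\N$, the subgroup $H = GL(\N_1)\times GL(\N_2)$, and $W := (\wedge^2\N_1^*\otimes\N_1)\oplus(\wedge^2\N_2^*\otimes\N_2)$ the linear subspace of block-diagonal (decomposable) brackets. First I would record that $W$ is a smooth (being linear) $H$-stable subspace of $V$, that $H$ is a reductive group closed under the metric adjoint (hence compatible, exactly as in the proofs of the preceding theorems in this section), and that the moment-map computation already carried out in the proof of Theorem \ref{thm: N = N1+N2 and einstein condition} shows $m_G(\lambda)\in\mathfrak{gl}(\N_1)\times\mathfrak{gl}(\N_2) = \h$ for every $\lambda\in W$. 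That is, $G$ is $H$-detected along $W$, so the hypotheses of Theorem \ref{thm: finiteness of G vs H orbits} are met.

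Next I would connect the linear subspace $W$ to the variety $\V_1\times\V_2$ appearing in the statement. Since $\V$ is $G$-stable, every orbit $G\cdot\mu$ with $\mu\in\V_1\times\V_2$ remains inside $\V$, whence $G\cdot\mu\cap W = G\cdot\mu\cap(\V\cap W)$. A bracket lying in $\V\cap W$ is simultaneously a Lie bracket on $\N$ and of block-diagonal form $\nu_1\oplus\nu_2$ with $\nu_i\colon\wedge^2\N_i\to\N_i$; restricting the Jacobi identity to triples inside each $\N_i$ forces $\nu_i\in\V_i$, so that $\V\cap W = \V_1\times\V_2$. Consequently $G\cdot\mu\cap W = G\cdot\mu\cap(\V_1\times\V_2)$ for all $\mu\in\V_1\times\V_2$, and the intersection studied by the theorem is exactly the one in the proposition.

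With these identifications in hand, Theorem \ref{thm: finiteness of G vs H orbits} applies verbatim: the components of $G\cdot\mu\cap(\V_1\times\V_2)$ are $H_0$-orbits, and the whole intersection is a finite union of $GL(\N_1)\times GL(\N_2)$-orbits. Translating into moduli, two decomposable brackets $\mu_1\oplus\mu_2$ and $\mu_1'\oplus\mu_2'$ are $GL(\N)$-isomorphic precisely when they lie in the same intersection $G\cdot\mu\cap(\V_1\times\V_2)$, so the natural map $(\V_1\times\V_2)/H \to \{\,G\text{-orbits meeting }\V_1\times\V_2\,\}$ is surjective with finite fibers; in particular the two moduli have equal dimension, in the sense of the remark following Theorem \ref{thm: finiteness of G vs H orbits}.

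The main obstacle is not the invocation of the theorem but the bookkeeping reconciling the two descriptions of the subvariety: moment-map detection is naturally phrased for the \emph{linear} subspace $W$, which is what furnishes the smoothness required by Theorem \ref{thm: finiteness of G vs H orbits}, whereas the statement concerns the \emph{variety} $\V_1\times\V_2$ of genuine Lie brackets, which may itself be singular. The equality $\V\cap W=\V_1\times\V_2$ is the bridge. One should also be careful that `moduli' here denotes a finite-to-one correspondence rather than a strict bijection: when $\N_1$ and $\N_2$ contain isomorphic summands there can be $GL(\N)$-identifications, such as a swap of factors, that do not arise from $H$, so $G\cdot\mu\cap(\V_1\times\V_2)$ need not be a single $H$-orbit.
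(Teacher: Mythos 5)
Your proposal is correct and takes essentially the same route as the paper, whose proof is a one-line citation of exactly the ingredients you assemble: the moment-map computation from the proof of Theorem \ref{thm: N = N1+N2 and einstein condition} (detection along the linear subspace $W$), Theorem \ref{thm: finiteness of G vs H orbits}, and the remark on moduli following that theorem. Your extra bookkeeping --- the bridge $\V\cap W=\V_1\times\V_2$ and the caveat that the correspondence of moduli is finite-to-one (e.g.\ under a swap of isomorphic factors) rather than a strict bijection --- fills in details the paper leaves implicit.
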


\begin{proof} This follows immediately from the proof of the previous theorem and Theorem \ref{thm: finiteness of G vs H orbits}, see also the remarks following  Theorem \ref{thm: finiteness of G vs H orbits}.
\end{proof}

\textit{Remark.} In this way we can construct moduli of Einstein and non-Einstein nilradicals.  For example, pick one non-Einstein nilradical $\N_1$ (which are known to exist, see \cite{LauretWill:EinsteinSolvExistandNonexist} or \cite{Jablo:ModuliOfEinsteinAndNoneinstein}) and add on the vector space $\N_2$.  Letting the bracket vary on $\N_2$ will produce moduli of algebras $\N_1\oplus \N_2$ which cannot be Einstein nilradical by the previous proposition.  However, the moduli of non-Einstein nilradicals constructed in this way is somewhat trivial.  In \cite{Jablo:ModuliOfEinsteinAndNoneinstein}, there are given constructions of non-Einstein nilradicals which do not arise as direct sums, that is, the examples given are indecomposable.

These are not the first examples of moduli of non-Einstein nilradicals.  In \cite{Will:CurveOfNonEinsteinNilradicals}, C. Will produces a curve of (pairwise) non-isomorphic, non-Einstein nilradicals.  To our knowledge, these are the only other known examples of moduli of this phenomenon.\\


In the study of two-step nilpotent Lie algebras there are different representations that are very useful for obtaining concrete results.  We will not go into the details of setting these up and refer the reader to \cite{Jablo:ModuliOfEinsteinAndNoneinstein} for the applications of this work in the two-step case.  The work of this paper allows one to concretely construct moduli of Einstein and non-Einstein nilradicals in the set of ``non-generic" two-step nilalgebras.

Although we do not present the details here, one interesting example that comes from the two-step setting is the following.  See the two-step case of type $(2,2k+1)$ in \cite{Jablo:ModuliOfEinsteinAndNoneinstein}.

\begin{example} There exist representations with one generic orbit, but with moduli (of dimension $\geq 1$) of non-generic orbits in the null-cone. \end{example}

\section{Nilpotent Orbits in the Adjoint Representation}\label{section: adjoint repn}

In this section we present the case of the adjoint representation.  We focus our attention on the null-cone of this representation.  Recall that the null-cone consists of a representation $V$ consists of points $v\in V$ such that $0\in \overline{G\cdot v}$.  When $V=\g$ is the adjoint representation, $X\in \g$ is in the null-cone if and only if $X$ is nilpotent, see, e.g., Remark 10.2 of \cite{BHC}.

It is well-known that there exist only finitely many orbits in the null-cone of an adjoint representation of a reductive group.  We reprove this result by applying Theorem \ref{thm: finiteness of G vs H orbits}.  Moreover, we show that each of these orbits is distinguished (in the sense of Definition \ref{def: distinguished point/orbit}).

The known general proofs of the finiteness result are essentially the same as ours, without the language of the moment map.  These proofs use
Lemma \ref{lemma: Jantzen} and an additional theorem on the reducibility of representations of reductive groups.  These ideas originally go back to Richardson \cite{Richardson:ConjClassesinLieAlgebrasandAlgebraicGroups}.

\begin{prop}\label{prop: SL_n finite nullcone} Let $G = SL_n(\mathbb C)$.  The null-cone of the adjoint action of $G$ on $\g$ has finitely many orbits.
\end{prop}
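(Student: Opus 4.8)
The plan is to realize the nilpotent cone through the grading attached to a one-parameter subgroup and to use Theorem~\ref{thm: finiteness of G vs H orbits} to pass from $G$-orbits to orbits of the associated Levi. First I would fix the $G$-compatible inner product $\ip{X}{Y}=\mathrm{Re}\,\mathrm{tr}(XY^{*})$ on $V=\g$ together with the moment-map inner product $\langle\langle X,Y\rangle\rangle=-B(X,\theta Y)$ of Example~\ref{ex: inner product on sl_n}, where $\theta(X)=-X^{*}$, $\lk=\mathfrak{su}(n)$, and $\p=i\lk$. For the adjoint action the moment map is then given implicitly by $\langle\langle m_G(X),Y\rangle\rangle = 2\ip{[Y,X]}{X}$ for all $Y\in\g$. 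Next I would pick a self-adjoint semisimple element $\xi\in\p$ with integer eigenvalues, i.e.\ the generator of a cocharacter $\lambda\colon\mathbb C^{*}\to G$, and let $\g=\bigoplus_k\g_k$ be the eigenspace grading of $\mathrm{ad}\,\xi$. Since $\xi=\xi^{*}$, the adjoint sends $\g_k$ to $\g_{-k}$, so the Hermitian form $\mathrm{tr}(XY^{*})$ pairs $\g_j$ with $\g_k$ trivially unless $j=k$; the grading is orthogonal.

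For the detection step I would set $H=L=Z_G(\lambda)$, a reductive (product-of-$GL$) subgroup with $\mathfrak h=\g_0$ and a Cartan decomposition compatible with that of $G$, and take $W=\g_2$, a closed $L$-stable subspace. For $X\in\g_2$ and $Y\in\g_k$ one has $[Y,X]\in\g_{k+2}$ while $X^{*}\in\g_{-2}$, so $\langle\langle m_G(X),Y\rangle\rangle=2\,\mathrm{Re}\,\mathrm{tr}([Y,X]X^{*})$ vanishes unless $k=0$; hence $m_G(X)\in\g_0=\mathfrak h$ and $G$ is $H$-detected along $W$. Theorem~\ref{thm: finiteness of G vs H orbits} then shows that $G\cdot X\cap\g_2$ is a finite union of $L$-orbits for every $X\in\g_2$.

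The reduction and induction would go as follows. By the Jacobson--Morozov theorem every nonzero nilpotent $e$ sits in an $\mathfrak{sl}_2$-triple $(e,h,f)$ with $h$ conjugate into $\p$; taking $\xi=h$ places $e\in\g_2$, so every nilpotent orbit meets $W$ for an appropriate grading. I would then induct on $\dim G$. Nilpotents lying in a proper Levi subalgebra are handled by the inductive hypothesis applied to the finitely many conjugacy classes of Levi subgroups. For the remaining (distinguished) nilpotents $e$, the structure theory of the Dynkin grading gives $\dim L\cdot e=\dim\g_2$, so $L\cdot e$ is open in $\g_2$; since $\g_2$ is irreducible it has at most one open $L$-orbit, so each weighted Dynkin diagram contributes at most one distinguished orbit. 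As the weights $\alpha_i(\xi)$ lie in $\{0,1,2\}$, only finitely many gradings occur, and finiteness follows.

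The hard part will be closing the induction rigorously: identifying precisely which nilpotents are distinguished, proving that a distinguished $e$ gives an open (hence essentially unique) $L$-orbit in $\g_2$, and checking that distinct distinguished orbits yield distinct weighted diagrams. This is exactly where Theorem~\ref{thm: finiteness of G vs H orbits} earns its keep, replacing the classical reducibility argument by guaranteeing that the finitely many $L$-orbits cut out of each $\g_2$ account for all $G$-orbits meeting it; the surrounding $\mathfrak{sl}_2$-theory (Jacobson--Morozov, conjugacy of weighted diagrams, and the dimension equality for distinguished elements) is the structural input I would still have to supply.
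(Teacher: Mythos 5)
Your proposal has a genuine gap, and it also misses how cheap this particular proposition is. In the paper, Proposition \ref{prop: SL_n finite nullcone} is the \emph{elementary base case}: for $G=SL_n(\mathbb C)$ the null-cone of the adjoint action consists exactly of the nilpotent matrices (Remark 10.2 of \cite{BHC}), conjugacy classes of nilpotent matrices are classified by Jordan normal form, i.e.\ by partitions of $n$, and since scalar matrices act trivially in the adjoint action the $SL_n(\mathbb C)$-orbits coincide with the $GL_n(\mathbb C)$-orbits. There are finitely many partitions of $n$, so the proof is one line of linear algebra, with no moment map at all. The paper then feeds this base case into Theorem \ref{thm: finiteness of G vs H orbits} (via the embedding $G\subset SL_n(\mathbb C)$ and the detection $m(X)=-[X,\theta(X)]\in\g$) to obtain the Dynkin--Kostant finiteness theorem for \emph{arbitrary} reductive $G$. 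You have inverted this architecture: you are sketching the general Dynkin--Kostant/Bala--Carter argument (Jacobson--Morozov, Dynkin gradings, distinguished nilpotents, induction over Levi subgroups) in order to prove the easy special case.

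The inversion would be legitimate logically (Theorem \ref{thm: finiteness of G vs H orbits} does not depend on this proposition, and your detection computation is correct: for $X\in\g_2$ one has $X^{*}\in\g_{-2}$, so $m(X)=[X,X^{*}]\in\g_0=\h$), but as written it is not a proof. The steps you explicitly defer --- that a distinguished nilpotent $e$ satisfies $\dim L\cdot e=\dim\g_2$ and hence has open $L$-orbit, that non-distinguished nilpotents lie in proper Levi subalgebras, and that distinct orbits give distinct weighted Dynkin diagrams --- are precisely the hard content of the classification; everything else, including the appeal to Theorem \ref{thm: finiteness of G vs H orbits}, is scaffolding around them. Note also that Theorem \ref{thm: finiteness of G vs H orbits} cannot close the induction for you: it says that for each \emph{fixed} $X$ the intersection $G\cdot X\cap\g_2$ is a finite union of $L$-orbits, but it gives no bound on how many distinct $G$-orbits meet $\g_2$, which is exactly what your induction must control. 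So the finiteness you need still rests entirely on the $\mathfrak{sl}_2$-structure theory you have not supplied, and for $SL_n$ that effort is unnecessary: Jordan form already answers the question.
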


These orbits are in one-to-one correspondence with the partitions of $n$; they correspond to the different Jordan normal forms with $0's$ along the diagonal (the nilpotent endomorphisms).  See \cite[Section 1.1]{Jantzen:NilpotentOrbitsInRepresentationTheory} for more details.

\begin{thm}[Dynkin-Kostant] Let $G$ be a (real or complex algebraic) linear reductive group.  There exist only finitely many orbits in the null-cone of the adjoint representation. \end{thm}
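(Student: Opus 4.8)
The plan is to reduce the general reductive case to the special linear case handled in Proposition \ref{prop: SL_n finite nullcone}, using Theorem \ref{thm: finiteness of G vs H orbits} with the roles of $G$ and $H$ played by $SL(\g)$ (or $GL(\g)$) and $Ad\ G$ respectively. First I would set up the ambient representation: embed $G$ into $SL(\g)$ (up to a finite central issue, $GL(\g)$) via the adjoint representation $Ad : G \to GL(\g)$, so that $\h := ad\ \g \subset \mathfrak{sl}(\g)$ is the image Lie subalgebra. The key conceptual point is that the adjoint orbits of $G$ on $\g$ are exactly the orbits of the subgroup $H := Ad\ G$ acting on the vector space $V := \g$, viewed as a $G'$-representation where $G' := SL(\g)$ acts in the standard way on $V=\g$. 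The nilpotent elements of $\g$ are precisely those that are nilpotent as endomorphisms when $\g$ is realized concretely, which places them in the null-cone of the $SL(\g)$-action; thus I would take $W$ to be the variety of nilpotent elements (or work componentwise with the finitely many $SL(\g)$-orbits supplied by Proposition \ref{prop: SL_n finite nullcone}).

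Next I would verify the detection hypothesis: that $G' = SL(\g)$ is $H$-detected along the relevant subvariety, i.e. that the moment map $m_{G'}(w)$ lands in $\h = ad\ \g$ for $w \in W$. This is the crucial step that makes the machinery applicable. Concretely, with the inner product on $\mathfrak{sl}(\g)$ from Example \ref{ex: inner product on sl_n} (built from the Killing form and a Cartan involution), one computes $m_{G'}(X)$ for $X \in \g$ and must check it is of the form $ad\ Y$ for some $Y \in \g$. I expect this to follow because the standard moment map for $SL(\g)$ acting on $\g$ is expressible in terms of $X$, $X^*$, and the bracket operations, and the invariance properties of the Killing form together with the $\theta$-compatibility force the resulting symmetric endomorphism to lie in the self-adjoint part of $ad\ \g$. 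I would also invoke Proposition \ref{prop: cartan decomp of H and G} to arrange a Cartan involution simultaneously preserving $SL(\g)$ and $H = Ad\ G$, guaranteeing the compatibility of inner products required by the hypotheses of Theorem \ref{thm: finiteness of G vs H orbits}.

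With detection established, the finiteness conclusion is nearly immediate. By Proposition \ref{prop: SL_n finite nullcone} the $SL(\g)$-action on $V = \g$ has finitely many orbits in its null-cone, which contains all nilpotent elements of $\g$. For each such $SL(\g)$-orbit $SL(\g)\cdot w$, Theorem \ref{thm: finiteness of G vs H orbits} asserts that $SL(\g)\cdot w \cap W$ is a finite union of $H$-orbits, i.e. a finite union of $Ad\ G$-orbits, hence of $G$-orbits in $\g$. Since $W$ is covered by finitely many such $SL(\g)$-orbits and each contributes only finitely many $Ad\ G$-orbits, the total number of nilpotent $G$-orbits is finite. The real case then follows by the complexification argument of the preceding subsection, applying Theorem \ref{thm: BHC} to pass from $G^{\mathbb C}$ down to $G$.

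The main obstacle I anticipate is the detection computation, namely verifying $m_{SL(\g)}(X) \in ad\ \g$ for nilpotent $X$, together with the bookkeeping of $W$'s smoothness: $W$ is the nilpotent variety, which is singular at $0$ and along non-generic strata, whereas Theorem \ref{thm: finiteness of G vs H orbits} requires $W$ smooth. I would circumvent this either by stratifying $W$ into its smooth $SL(\g)$-orbit pieces and applying the theorem stratum-by-stratum, or by replacing $W$ with an ambient smooth $Ad\ G$-stable subvariety (for instance a linear subspace, or $\g$ itself) on which detection still holds and whose intersection with each $SL(\g)$-orbit recovers the nilpotent orbits. Confirming that the detection identity genuinely holds on the chosen $W$ — and not merely on the full space — is where the essential content lies; everything downstream is a formal consequence of the already-proved theorem.
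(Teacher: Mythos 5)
There is a genuine gap, and it sits precisely at the step you flag as ``where the essential content lies'': in your setup the detection hypothesis is not merely unverified, it is provably false. You take the ambient group $G' = SL(\g)$ acting on $V = \g$ by the \emph{standard} representation, with $H = Ad\ G$. The formula ``in terms of $X$, $X^*$, and the bracket operations'' that you expect is the moment map of the \emph{adjoint} representation of $SL_n$ on $\mathfrak{sl}_n$; it is not the moment map of the standard representation. For the standard action of $SL_N(\mathbb C)$ on $\mathbb C^N$, the moment map at a vector $v$ is (up to positive scale) the trace-free Hermitian operator $vv^* - \frac{|v|^2}{N}I$, with eigenvalues $|v|^2(1-\frac{1}{N})$ on $\mathbb C v$ and $-\frac{|v|^2}{N}$ on $v^\perp$; in particular it is an \emph{invertible} endomorphism of $\g$ for every $v \neq 0$. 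But every element of $\h = ad\ \g$ is singular, since $ad\ Y$ annihilates $Y$. Hence $m_{G'}(v) \notin \h$ for every nonzero $v$, so $G'$ is not $H$-detected along any subvariety meeting $V\setminus\{0\}$, and no choice of inner products can repair this (the admissible inner products for $\mathfrak{sl}_N$ are unique up to scale, by irreducibility of the $K$-action on $\p$). Relatedly, your citation of Proposition \ref{prop: SL_n finite nullcone} is misapplied: that proposition concerns the adjoint action of $SL_n$ on $\mathfrak{sl}_n$, not the standard action of $SL(\g)$ on $\g$, whose orbits are just $\{0\}$ and $\g\setminus\{0\}$. So the downstream finiteness argument has nothing to stand on.

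The repair is to change the ambient representation, and this is what the paper does. Fix any faithful embedding $G \subset SL_n(\mathbb C)$ as a linear algebraic group (there is no need to use $Ad$), and let the big group be $SL_n(\mathbb C)$ acting on $V = \mathfrak{sl}_n(\mathbb C)$ by its adjoint representation (conjugation); take $W = \g \subset \mathfrak{sl}_n(\mathbb C)$ and $H = G$, whose conjugation action restricted to $W$ is exactly the adjoint representation of $G$. Choosing a $G$-stable Cartan involution $\theta$ of $SL_n(\mathbb C)$ (Proposition \ref{prop: cartan decomp of H and G}) and the inner product $-B(\cdot,\theta(\cdot))$ of Example \ref{ex: inner product on sl_n}, the moment map becomes $m(X) = -[X,\theta(X)]$, so for $X \in \g$ one gets $m(X) \in [\g,\g] \subset \g$ because $\theta$ preserves $\g$: detection is immediate, with no further computation. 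This choice also dissolves your smoothness worry, since $W = \g$ is a linear subspace (smooth, closed, $G$-stable) and the singular nilpotent variety is never needed as $W$. With that corrected setup, the rest of your argument goes through as in the paper: nilpotent elements of $\g$ are nilpotent matrices, so the $G$-null-cone lies in the $SL_n$-null-cone; each of the finitely many nilpotent $SL_n(\mathbb C)$-orbits (Proposition \ref{prop: SL_n finite nullcone}) meets $\g$ in finitely many $G$-orbits by Theorem \ref{thm: finiteness of G vs H orbits}; and the real case follows from Theorem \ref{thm: BHC}, exactly as you say.
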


We prove this theorem in the complex setting and apply Theorem \ref{thm: BHC} to immediately obtain the theorem for reals groups once known for complex groups (cf. the end of Section \ref{section: orbits of compatible subgroups}).

\begin{proof}  It suffices to consider connected reductive groups $G$. As $G$ is a linear group, $G \subset SL_n(\mathbb C)$ for some $n\in \mathbb N$.  Let $\theta$ be a $G$-stable Cartan involution of $SL_n(\mathbb C)$, such exists by Proposition \ref{prop: cartan decomp of H and G}.  Then $\g \subset \mathfrak{sl}_n(\mathbb C)$ and we will endow $\mathfrak{sl}_n(\mathbb C)$ with the inner product from Example \ref{ex: inner product on sl_n}.  That is, $< \cdot,\cdot> = -B(\cdot, \theta(\cdot))$ where $B$ is the Killing form of $\mathfrak{sl}_n(\mathbb C)$.  This inner product is $SU(n), K,$ and $i$-invariant; and so, this inner product satisfies the requirements of Section \ref{section: prelim}.  We endow $\mathfrak{sl}_n(\mathbb C) = L \ SL_n(\mathbb C)$ and $\g = LG$ with the same inner product inner product $<<,>>=<,>$.

From this one readily computes
    \begin{equation}\label{eqn: m(v) for adjoint repn} m(X)=-[X,\theta(X)]\end{equation}
for $X\in \mathfrak{sl}_n(\mathbb C)$.  Observe that the $SL_n(\mathbb C)$ orbits are $G$-detected along $\g$ (see Definition \ref{def: G orbit H detected along W}); that is, the moment map takes values in $\g$ when evaluated along $\g$.

We also observe that the $G$-null-cone in $\g$  is contained in the $SL_n(\mathbb C)$-null-cone in $\mathfrak{sl}_n(\mathbb C)$.  Applying Theorem \ref{thm: finiteness of G vs H orbits} we see that each $SL_n(\mathbb C)$ orbit through the $G$-null-cone consists of finitely many $G$-orbits.  However, there are only finitely many such $SL_n(\mathbb C)$-orbits (see Proposition \ref{prop: SL_n finite nullcone}).  Hence, there are only finitely many $G$-orbits in the $G$-null-cone.
\end{proof}

\begin{thm} Let $G$ be a semi-simple group acting on $\g$ by the adjoint representation.  Every orbit in the null-cone of $\g$ is distinguished (in the sense of Definition \ref{def: distinguished point/orbit}).
\end{thm}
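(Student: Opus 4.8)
The plan is to combine the explicit moment-map formula for the adjoint representation with the distinguishedness criterion of Lemma \ref{lemma: dist point iff m(v)v=cv}. Exactly as in the proof of the Dynkin--Kostant finiteness theorem, endow $\g$ with the inner product $<<\cdot,\cdot>> = -B_\g(\cdot,\theta(\cdot))$, so that the moment map of the adjoint action is $m(X) = -[X,\theta(X)]$ (Equation \ref{eqn: m(v) for adjoint repn}). By Lemma \ref{lemma: dist point iff m(v)v=cv}, a nilpotent $X$ is distinguished precisely when $m(X)\cdot X = [\,-[X,\theta X],\,X\,] = c\,X$ for some scalar $c$. Since being distinguished is a property of the orbit (it asks only that the orbit \emph{contain} a distinguished point), it suffices to exhibit, in each nonzero nilpotent orbit, one representative satisfying this identity.

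The key step is to choose that representative to be the nilpositive element of a $\theta$-adapted (normal) $\mathfrak{sl}_2(\mathbb C)$-triple. Concretely, I claim every nonzero nilpotent $G$-orbit contains an element $X$ lying in a triple $(H,X,Y)$ with $[H,X]=2X$, $[H,Y]=-2Y$, $[X,Y]=H$, and, crucially, $\theta X = -Y$, $\theta Y = -X$, $\theta H = -H$. Granting this, the verification is immediate: $m(X) = -[X,\theta X] = -[X,-Y] = [X,Y] = H$, hence $m(X)\cdot X = [H,X] = 2X$, so $X$ meets the criterion with $c=2$ and its orbit is distinguished. Note this is consistent with $m$ being $\p$-valued, since $\theta H = -H$ forces $H\in\p$.

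Alternatively, and more in keeping with the methods of this paper, one can deduce the same conclusion from Corollary \ref{cor: G dist iff H dist}. Let $\s = \mathrm{span}_{\mathbb C}\{H,X,Y\}\cong\mathfrak{sl}_2(\mathbb C)$ be spanned by a normal triple and let $S$ be the corresponding (semisimple, hence reductive) subgroup. Because the triple is normal, $\s$ is $\theta$-stable, so $S$ is a compatible subgroup and $\s$ is a closed, $S$-stable, smooth subvariety (indeed a subspace) of $\g$. Moreover $G$ is $S$-detected along $\s$: for $Z\in\s$ one has $\theta Z\in\s$, whence $m_G(Z) = -[Z,\theta Z]\in\s$. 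Corollary \ref{cor: G dist iff H dist} then reduces the problem to whether $S\cdot X$ is distinguished inside $\mathfrak{sl}_2(\mathbb C)$, which is the elementary computation above applied to the standard nilpotent of $\mathfrak{sl}_2$.

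The main obstacle is the existence of a normal triple in each nilpotent orbit. This does not follow from Jacobson--Morozov alone, because an arbitrary completion $(H,X,Y)$ of a given nilpotent $X$ need not be $\theta$-stable; one must also move $X$ within its $G$-orbit so that the neutral element $H$ is symmetric (lies in $\p$) and $Y$ may be taken equal to $-\theta X$. This is classical and underlies the Kostant--Sekiguchi correspondence (see Collingwood--McGovern), and I would cite it rather than reprove it here; a moment-map heuristic is that the negative gradient flow of $||m||^2$ is tangent to the $G$-orbit and drives $X$ toward such a balanced representative, although the limit must be shown to remain in the orbit, which is exactly the content of the cited structural result. Finally, the real case follows from the complex case by Theorem \ref{thm: real dist iff compex dist}, precisely as in the finiteness theorem above.
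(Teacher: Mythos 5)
Your proposal is correct, but it takes a genuinely different route from the paper --- indeed, it essentially reconstructs the proof the paper deliberately avoids. Your key input is the existence, in every nilpotent orbit, of a $\theta$-adapted (normal/Cayley) triple $(H,X,Y)$ with $\theta X=-Y$, $\theta H=-H$; granted this, the verification $m(X)\cdot X=[H,X]=2X$ is immediate, and the real case follows as you say from Theorem \ref{thm: real dist iff compex dist}. But this existence statement is precisely the Sekiguchi-type structural result underlying the Schmid--Vilonen proof cited in the paper's remark after the theorem, and the paper states there that its purpose is to give a proof which does \emph{not} route through that result. Instead, the paper proves the $SL_n$ case by an explicit computation --- a rescaled Jordan block $J_n$ with superdiagonal entries $\lambda_i=\sqrt{\tfrac{i}{2}(n-i)}$ satisfies $m(J_n)\cdot J_n=J_n$, and general nilpotents are handled blockwise --- and then descends from $SL_n$ to an arbitrary self-adjointly embedded $G\subset SL_n$ via Corollary \ref{cor: G dist iff H dist}, using that $SL_n$ is $G$-detected along $\g$ by Equation \ref{eqn: m(v) for adjoint repn}. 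Note that your use of Corollary \ref{cor: G dist iff H dist} points in the opposite direction (detecting $G$ along the small subalgebra $\s\cong\mathfrak{sl}_2(\mathbb C)$, versus the paper detecting along $\g\subset\mathfrak{sl}_n$), and in your argument the corollary is actually redundant: once $m_G(X)=H$ and $[H,X]=2X$, Lemma \ref{lemma: dist point iff m(v)v=cv} applies to $G$ directly. What each approach buys: yours is shorter and more conceptual (a universal $c=2$, no Jordan-form computation, no reduction to simple factors), at the cost of importing Jacobson--Morozov plus the nontrivial conjugation argument producing $\theta$-adapted triples; the paper's proof is self-contained within its own moment-map machinery and, unlike your write-up, also addresses the dependence of ``distinguished'' on the choice of inner product (via irreducibility of $K$ on $\p$ for simple $G$, and per-factor rescaling in the reduction of a semisimple group to its simple factors), whereas you fix the single inner product $-B_\g(\cdot,\theta(\cdot))$ at the outset.
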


\textit{Remark.}  This result has previously appeared in the literature, see Lemma 2.11 of \cite{SchmidVilonen:OnTheGeometryofNilpotentOrbits}.  The proof presented in that work uses the work of \cite{Sekiguchi:RemarksOnRealNilpotentOrbitsofaSymmetricPair}.  Our proof relies on constructing such critical points for the case of $\mathfrak {sl}_n(\mathbb C)$ and then applying Corollary \ref{cor: G dist iff H dist}.

\begin{prop}  Every nilpotent orbit in the adjoint representation of $SL_n$ is distinguished.
\end{prop}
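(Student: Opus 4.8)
The plan is to exhibit, in each nilpotent orbit, an explicit representative satisfying the distinguished criterion of Lemma \ref{lemma: dist point iff m(v)v=cv}. Recall from Equation \ref{eqn: m(v) for adjoint repn} that the moment map for the adjoint action is $m(X) = -[X,\theta(X)]$, and since the Cartan involution on $\mathfrak{sl}_n(\mathbb C)$ satisfies $\theta(X) = -X^*$ (where $*$ denotes the conjugate transpose determined by the chosen Hermitian form), we have $m(X) = [X,X^*]$. The action of $m(X)\in\g$ on $X\in V=\g$ is the adjoint action, so the criterion of Lemma \ref{lemma: dist point iff m(v)v=cv} becomes: a nonzero nilpotent $X$ is distinguished if and only if $[[X,X^*],X] = cX$ for some $c\in\mathbb C$. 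Thus it suffices to find, in each nilpotent orbit, one element $X$ for which $[X,X^*]$ acts on $X$ by a scalar.

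The key idea is to take the representative to be the nilpositive element of an $\mathfrak{sl}_2$-triple adapted to the Cartan involution, so that the nilnegative element coincides with $X^*$. Concretely, I would use the partition classification of Proposition \ref{prop: SL_n finite nullcone}: nilpotent orbits of $SL_n$ are labelled by partitions $\lambda=(\lambda_1,\dots,\lambda_k)$ of $n$. For a part of size $m$ I realize the $m$-dimensional irreducible representation of $\mathfrak{sl}_2$ in an orthonormal weight basis, taking the real matrix $e^{(m)}$ with superdiagonal entries $\sqrt{j(m-j)}$, $j=1,\dots,m-1$. Then its transpose is the lowering operator $f^{(m)}=(e^{(m)})^T$, and $h^{(m)}=[e^{(m)},f^{(m)}]=\mathrm{diag}(m-1,m-3,\dots,-(m-1))$. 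Since every superdiagonal entry is nonzero, $e^{(m)}$ is a single Jordan block, so the block-diagonal matrix $X=\bigoplus_i e^{(\lambda_i)}$ is a real nilpotent element of Jordan type $\lambda$, i.e. a representative of the orbit attached to $\lambda$.

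The computation is then immediate. Because $X$ is real, $X^*=X^T=\bigoplus_i f^{(\lambda_i)}=:f$, whence $m(X)=[X,X^*]=[X,f]=\bigoplus_i h^{(\lambda_i)}=:h$. The relation $[h^{(m)},e^{(m)}]=2e^{(m)}$ holds in each block, so $[h,X]=2X$, that is $m(X)\cdot X = 2X$. By Lemma \ref{lemma: dist point iff m(v)v=cv} with $c=2$ the point $X$ is distinguished, and therefore the arbitrary nilpotent orbit containing it is distinguished. (This proposition is the base case; the statement for general semisimple $G$ then follows by applying Corollary \ref{cor: G dist iff H dist}.)

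\textbf{Main obstacle.} The only real content is arranging $X^*=f$ exactly, i.e. that the nilpositive and nilnegative elements of the triple are conjugate-transpose to one another with $[X,X^*]=h$ semisimple and satisfying $[h,X]=2X$. For a general reductive $\g$ this is the existence of a $\theta$-adapted ("normal") $\mathfrak{sl}_2$-triple, which rests on the Jacobson--Morozov theorem plus a conjugation argument to align the triple with the Cartan involution. For $\mathfrak{sl}_n$, however, the explicit symmetric realization above makes this transparent and uses nothing beyond the partition classification, so the only verifications needed are the routine facts that each $e^{(m)}$ has the stated Jordan type and that the $\mathfrak{sl}_2$-relations hold block by block.
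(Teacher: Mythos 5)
Your proof is correct and follows essentially the same route as the paper: both exhibit an explicit block-diagonal representative of each nilpotent orbit whose superdiagonal entries are rescaled so that $[[X,X^*],X]=cX$, then invoke Lemma \ref{lemma: dist point iff m(v)v=cv}. The only differences are cosmetic: your $\mathfrak{sl}_2$-triple normalization $\sqrt{j(m-j)}$ gives $c=2$, whereas the paper's entries $\sqrt{\tfrac{i}{2}(n-i)}$ are the same matrices scaled by $1/\sqrt{2}$, giving $c=1$.
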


\begin{proof} We first prove that the principal orbit in $\mathfrak{sl}_n$ is distinguished.  Consider the element
    $$ J_n = \begin{bmatrix}
            0&\lambda_1\\
            &0&\lambda_2\\
            &&\ddots &\ddots \\
            &&&0&\lambda_{n-1}\\
            &&&&0
            \end{bmatrix}$$
We point out that this element is in the same $SL_n$-orbit as the standard nilpotent Jordan block as long as the $\lambda_i$ are non-zero.

By Equation \ref{eqn: m(v) for adjoint repn}, $m(X) = -[X, -X^*] = XX^*-X^*X$ and
    $$m(X)\cdot X = [m(X),X]=m(X)X-Xm(X)= 2XX^*X - X^*X^2-X^2X^*$$
We claim that $\lambda_i = \sqrt{\frac{i}{2}(n-i)}$ satisfies the condition $m(J_n)\cdot J_n =J_n$ (cf. Definition \ref{def: distinguished point/orbit} and Lemma thereafter with $c=1$) and hence $SL_n\cdot J_n$ will be distinguished.  This is easy to verify by direct computation.

To see that every nilpotent orbit contains a distinguished point, observe that every nilpotent endomorphism can be conjugated to be of the form
    $$X = \begin{bmatrix} J_{k_1}\\
        & J_{k_2}\\
        &&\ddots \\
        &&& J_{k_j}\end{bmatrix}$$
where $\sum k_i=n$.  This is the Jordan form where the super diagonal has non-zero entries which are rescaled.  The matrix $m(X)$ will have the same block decomposition and hence
    $$ m(X)\cdot X= \begin{bmatrix} m(J_{k_1})\cdot J_{k_1}\\
        & m(J_{k_2})\cdot J_{k_2}\\
        &&\ddots \\
        &&& m(J_{k_j})\cdot J_{k_j}\end{bmatrix} = \begin{bmatrix} J_{k_1}\\
        & J_{k_2}\\
        &&\ddots \\
        &&& J_{k_j}\end{bmatrix} = X$$
\end{proof}

We continue now with the proof of the theorem for all semi-simple groups.  We use the following observation without proof.

\begin{lemma} Let $X\in \g$ be in the null-cone.  Then $cX\in G\cdot X$ for all $c\in \mathbb K$, where $\mathbb K = \mathbb R$, respectively $\mathbb C$, if $\g$ is a real, respectively complex, Lie algebra.
\end{lemma}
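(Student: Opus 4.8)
The plan is to deduce the statement from the Jacobson--Morozov theorem, which reduces the scaling question to the action of a single one-parameter torus. If $X=0$ the claim is trivial, so assume $X\neq 0$ is nilpotent. By Jacobson--Morozov there is an $\mathfrak{sl}_2$-triple $\{X,\Lambda,Y\}\subset\g$ with $[\Lambda,X]=2X$, $[\Lambda,Y]=-2Y$, and $[X,Y]=\Lambda$; here $\Lambda\in\g$ plays the role of the neutral (semisimple) element. The whole point is that conjugating $X$ by the torus generated by $\Lambda$ rescales $X$ exponentially, so the set of scalars $c$ with $cX\in G\cdot X$ is exactly the image of the exponential on the relevant field.

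Concretely, I would compute the adjoint action of $\exp(t\Lambda)\in G$ on $X$. Since $\mathrm{ad}_\Lambda X=2X$ we get $\mathrm{ad}_\Lambda^k X=2^k X$, and hence
\[
\mathrm{Ad}(\exp(t\Lambda))\cdot X \;=\; \exp(t\,\mathrm{ad}_\Lambda)\,X \;=\; \sum_{k\ge 0}\frac{t^k}{k!}\,\mathrm{ad}_\Lambda^k X \;=\; e^{2t}\,X .
\]
Thus $e^{2t}X\in G\cdot X$ for every $t$ in the ground field. Over $\mathbb C$ we may take $t\in\mathbb C$, and $t\mapsto e^{2t}$ surjects onto $\mathbb C^\times=\mathbb C\setminus\{0\}$; hence $cX\in G\cdot X$ for every nonzero $c\in\mathbb C$, which is the full complex statement. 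Over $\mathbb R$, taking $t\in\mathbb R$ yields every \emph{positive} multiple $cX$ with $c\in\mathbb R^{>0}$. In both cases $c=0$ gives $cX=0$, which lies in $G\cdot X$ only when $X=0$, so the substantive content is the assertion for $c\neq 0$.

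The step I expect to be the main obstacle is the real case with $c<0$: the torus above produces only positive scalings, and negative ones need not be attainable. Already in $\g=\mathfrak{sl}_2(\mathbb R)$ the nonzero nilpotents split into two $SL_2(\mathbb R)$-orbits, with $X$ and $-X$ lying in \emph{different} orbits, so in general $-X\notin G\cdot X$. I would therefore handle the real situation not by forcing the Weyl-type element (which sends $X$ to $-Y$, not $-X$), but by routing the intended application through the complexification: for the enclosing theorem it suffices, by Theorem~\ref{thm: real dist iff compex dist}, that $G\cdot X$ is distinguished if and only if $G^{\mathbb C}\cdot X$ is, and over $\mathbb C$ the full scaling statement $cX\in G^{\mathbb C}\cdot X$ for all $c\in\mathbb C^\times$ holds by the computation above. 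In this way the complex case carries all the weight, the positive-scalar real case records the cone structure of $G\cdot X$ that the flow actually realizes, and the delicate negative-scalar phenomenon is never invoked where it would fail.
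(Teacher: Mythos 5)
The paper gives no argument for this lemma at all---it is introduced with the words ``We use the following observation without proof''---so there is no in-paper proof to compare against; your Jacobson--Morozov argument is the standard one the author presumably had in mind, and it is correct wherever the statement is actually true. (For semisimple or reductive $\g$ the nilpotent $X$ lies in the derived subalgebra, so the $\mathfrak{sl}_2$-triple exists over either field, and $\mathrm{Ad}(\exp(t\Lambda))X = e^{2t}X$ indeed yields all of $\mathbb C^\times$ over $\mathbb C$ and all positive scalars over $\mathbb R$.) More valuably, you have caught a genuine overstatement in the lemma as printed: ``for all $c\in \mathbb K$'' fails at $c=0$ unless $X=0$, and fails for $c<0$ over $\mathbb R$, exactly as your $\mathfrak{sl}_2(\mathbb R)$ example shows---conjugating $e=\left(\begin{smallmatrix}0&1\\0&0\end{smallmatrix}\right)$ by $g\in SL_2(\mathbb R)$ produces upper-right entry $a^2\geq 0$, so $-e\notin SL_2(\mathbb R)\cdot e$. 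Your verification that the enclosing theorem is unharmed is also correct, and in fact your complexification detour via Theorem \ref{thm: real dist iff compex dist} is not even needed: the lemma is invoked only to normalize $m(X_i)\cdot X_i = c_iX_i$ to $c_i=1$ in the reduction to simple factors, and the constant at a distinguished nilpotent point is automatically a positive real. Indeed, putting $X=m(v)$ into the defining identity $\langle\langle m(v),X\rangle\rangle = 2\langle X\cdot v,v\rangle$ gives $\|m(v)\|^2 = 2c|v|^2 > 0$ (here $c$ is real because $m(v)\in\p$ acts self-adjointly, and $m(v)\neq 0$ because nonzero nilpotent orbits are non-closed), while $v\mapsto tv$ rescales $c$ by $|t|^2$. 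Thus only positive scalings are ever used, and these are precisely what your one-parameter subgroup supplies over both $\mathbb R$ and $\mathbb C$.
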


First, we reduce to the case that $G$ is simple.  Suppose that the result is known for simple groups and decompose $\g= \g_1\times \dots \g_k$ as a product of simple factors.  Let $X$ be a nilpotent element of $\g$, then $X = \sum X_i$ where $X_i \in \g_i$ are all nilpotent elements.  Observe that $m(X) =\sum m(X_i)$.  Using the lemma above and the hypothesis, we may assume that each $X_i$ satisfies $m(X_i)\cdot X_i=X_i$. But now $m(X)\cdot X = \sum m(X_i)\cdot X_i = \sum X_i = X$.

All that remains to be shown is that the theorem is true in the case $G$ is simple.  Firstly, we embed $G$ into $SL_n$ so that $G$ is self-adjoint, as in the beginning of the section.  As $G$ is simple, any $K$-invariant inner product on $\p$ is unique up to scaling.  This is due to the fact that $K$ acts irreducibly on $\p$.  Thus, given any inner product on $\g$, as in Section \ref{section: prelim}, our moment map is uniquely defined up to rescaling.  As rescaling the moment map does not affect the property of a point being distinguished, we may assume that our inner product on $\g$ is the restriction of the inner product on $\mathfrak{sl}_n$.

Recall that the $SL_n$-orbits are $G$-detected along $\g$.  As every nilpotent element of $\g$ is a nilpotent element of $\mathfrak{sl}_n$ and every nilpotent orbit of $SL_n$ is $SL_n$-distinguished, applying Corollary \ref{cor: G dist iff H dist} we see that every nilpotent orbit of $G$ is $G$-distinguished.

%

\providecommand{\bysame}{\leavevmode\hbox to3em{\hrulefill}\thinspace}
\providecommand{\MR}{\relax\ifhmode\unskip\space\fi MR }
\providecommand{\MRhref}[2]{%
  \href{http://www.ams.org/mathscinet-getitem?mr=#1}{#2}
}
\providecommand{\href}[2]{#2}

\end{document}